\numberwithin{equation}{section}
\newtheorem{theorem}{Theorem}[section]
\newtheorem{lemma}[theorem]{Lemma}
\newtheorem{example}[theorem]{Example}
\newtheorem{proposition}[theorem]{Proposition}
\newtheorem{cor}[theorem]{Corollary}
\newtheorem{rem}[theorem]{Remark}
\newtheorem{definition}[theorem]{Definition}
\newcommand{\R}{\mathbb{R}}
\newcommand{\N}{\mathbb{N}}
\renewcommand{\tilde}{\widetilde}
\newcommand{\cF}{{\ensuremath{\mathcal F}} }
\newcommand{\cP}{{\ensuremath{\mathcal P}} }
\newcommand{\cD}{{\ensuremath{\mathcal D}} }
\DeclareMathSymbol{\leqslant}{\mathalpha}{AMSa}{"36} % nicer `smaller or equal'
\DeclareMathSymbol{\geqslant}{\mathalpha}{AMSa}{"3E} % nicer `larger or equal'
\DeclareMathSymbol{\eset}{\mathalpha}{AMSb}{"3F}     % nicer `emptyset'
\renewcommand{\leq}{\;\leqslant\;}                   % redef. of < or =
\renewcommand{\geq}{\;\geqslant\;}                   % redef. of > or =
\newcommand{\dd}{\text{\rm d}}             % a straight d for differentials
\def\restriction#1#2{\mathchoice
	{\setbox1\hbox{${\displaystyle #1}_{\scriptstyle #2}$}
		\restrictionaux{#1}{#2}}
	{\setbox1\hbox{${\textstyle #1}_{\scriptstyle #2}$}
		\restrictionaux{#1}{#2}}
	{\setbox1\hbox{${\scriptstyle #1}_{\scriptscriptstyle #2}$}
		\restrictionaux{#1}{#2}}
	{\setbox1\hbox{${\scriptscriptstyle #1}_{\scriptscriptstyle #2}$}
		\restrictionaux{#1}{#2}}}
\def\restrictionaux#1#2{{#1\,\smash{\vrule height .8\ht1 depth .85\dp1}}_{\,#2}}
\newcommand{\bbB}{{\ensuremath{\mathbb B}} }
\newcommand{\bbC}{{\ensuremath{\mathbb C}} }
\newcommand{\bbE}{{\ensuremath{\mathbb E}} }
\newcommand{\bbP}{{\ensuremath{\mathbb P}} }
\newcommand{\bbR}{{\ensuremath{\mathbb R}} }
\newcommand{\norm}[1]{\left\lVert#1\right\rVert}
\title[Law of large numbers via mild formulation]{A Law of Large Numbers for interacting diffusions via a mild formulation}
\author{Florian Bechtold, Fabio Coppini}
\email{florian.bechtold@sorbonne-universite.fr, fcoppini@lpsm.paris}
\begin{document}
	\maketitle
	
	\begin{abstract}
		Consider a system of $n$ weakly interacting particles driven by independent Brownian motions. In many instances, it is well known that the empirical measure converges to the solution of a partial differential equation, usually called McKean-Vlasov or Fokker-Planck equation, as $n$ tends to infinity. We propose a relatively new approach to show this convergence by directly studying the stochastic partial differential equation that the empirical measure satisfies for each fixed $n$. Under a suitable control on the noise term, which appears due to the finiteness of the system, we are able to prove that the stochastic perturbation goes to zero, showing that the limiting measure is a solution to the classical McKean-Vlasov equation. In contrast with known results, we do not require any independence or finite moment assumption on the initial condition, but the only weak convergence.
		
		The evolution of the empirical measure is studied in a suitable class of Hilbert spaces where the noise term is controlled using two distinct but complementary techniques: rough paths theory and maximal inequalities for self-normalized processes.
		\\
		\\
		\textit{2020 MSC:} 60K35, 60F05, 60H20, 60H15, 60L90.
		\\
		\\
		\textit{Keywords and phrases:} Interacting particle system, Stochastic differential equations, McKean-Vlasov, Semigroup approach, Rough paths, Self-normalized processes
	\end{abstract}
	
	\tableofcontents

	\section{Introduction}
	The theory of weakly interacting particle systems has received great attention in the last fifty years. On the one hand, its mathematical tractability has allowed to obtain a  deep understanding of the behavior of the empirical measure for such systems: law of large numbers \cite{oelschlager, cf:CDFM19}, fluctuations and central limit theorems \cite{tanaka, cf:FM97}, large deviations \cite{cf:FengKurtz,gaertner} and propagation of chaos properties \cite{sznitman} are by now established. On the other hand, the theory of weakly interacting particles enters in several areas of applied mathematics such as mean-field games or finance models \cite{cf:MFG}, making it an area of active research.
	
	Depending on the context of application, several results are available. The class of mean-field systems under the name of weakly interacting particles is rather large and models may substantially vary from one another depending on the regularity of the coefficients or the noise. This richness in models is reflected in a variety of different techniques implemented in their study (see e.g. \cite{cf:CDFM19,oelschlager,sznitman} for three very different approaches).
	
	If one focuses on models where the interaction function is regular enough, e.g. bounded and globally Lipschitz, one of the aspects that has not been completely investigated so far, concerns the initial condition. To the authors' knowledge, most of known results require a finite moment condition in order to prove tightness properties of the general sequence (e.g. \cite{gaertner}) or to apply a fixed-point argument in a suitable topological space (e.g. \cite{cf:CDFM19}). The only exceptions are given by \cite{sznitman,tanaka}, although they require independent and identically distributed (IID) initial conditions. We want to point out that existence of a solution to the limiting system, a non-linear partial differential equation (PDE) known as Fokker-Planck or McKean-Vlasov equation, does not require any finite moment condition on the initial measure, see e.g. \cite[Theorem 1.1]{sznitman}. Furthermore, whenever the particle system is deterministic, there is no need to assume independence (or any finite moment) for this same convergence, see, e.g., \cite{cf:Dob79,cf:Neu91}.
	
	\smallskip
	
	We present a result in the spirit of the law of large numbers, without requiring any assumption on the initial conditions but the convergence of the associated empirical measure. Our main idea consists in exploiting a mild formulation associated to the stochastic partial differential equation satisfied by the empirical measure for a fixed (finite!) population. The main difficulty is giving a meaning to the noise term appearing in such formulation: exploiting the regularizing properties of the semigroup generated by the Laplacian in two different ways, using rough paths theory and maximal inequalities for self normalized processes respectively, we are able to adequately  control it. By taking the limit for the size of the population which tends to infinity, the stochastic term vanishes and the limiting measure satisfies the well-known McKean-Vlasov equation.
	
	\subsection{Organization}
	The paper is organized as follows. In the rest of this section we  present the model, known results and introduce the set-up in which the evolution of the empirical measure is studied along with notation used.
	
	In Section 2 we give the definition of our notion of solution as well as a corresponding uniqueness statement. The law of large numbers, Theorem \ref{thm:main}, is presented right after; the section ends with the strategy of the proof, a discussion and a comparison with the existing literature.
	
	The noise perturbation mentioned in the introduction is tackled in Section 3 where rough paths techniques and maximal inequalities for self-normalized processes are exploited. The proof of Theorem \ref{thm:main} is given at the end of this section. 
	
	Appendix A recalls general properties of analytic semigroups; Appendix B provides an extension of Gubinelli's theory for rough integration to our setting.
	
	\subsection{The model and known results}
	Consider $\left(\Omega, \cF, \left(\cF_t\right)_{t\geq 0}, \mathbb{P} \right)$ a filtered probability space, the filtration satisfying the usual conditions. Fix $d \in \N$, let $(B^i)_{i\in\N}$ be a sequence of IID $\R^d$-valued Brownian motions adapted to the filtration $\left(\cF_t\right)_{t\geq 0}$. 
	
	Fix $n\in \N$ and $T>0$ a finite time horizon.  Let $\Gamma:\R^d\times \R^d\rightarrow \R^d$ be a bounded Lipschitz function, and $(x^{i,n})_{1 \leq i\leq n}$ the unique strong solution to
	\begin{equation}
	\label{eq:bm}
	\begin{cases}
	\dd x^{i,n}_t =  \frac{1}{n}\sum_{j=1}^n \Gamma(x^{i,n}_t, x^{j,n}_t)\dd t+ \dd B^i_t, \\ 
	\; \; x^{i,n}_0 = x^i_0,
	\end{cases}
	\end{equation}
	for $t \in [0,T]$ and $i=1,\dots,n$. The initial conditions are denoted by the sequence $(x^i_0)_{i \in \N} \subset \R^d$, whenever they are random they are taken independent of the Brownian motions. Existence and uniqueness for \eqref{eq:bm} is a classical result; see, e.g., \cite{stroockVaradhan}.
	
	\smallskip
	The main quantity of interest in system \eqref{eq:bm} is the empirical measure $\nu^n = (\nu^n_t)_{t \in [0,T]}$, a random variable with values on the probability measures. It is defined for $t \in [0,T]$ by
	\begin{equation}
	\label{d:emp}
	\nu^n_t \, := \, \frac 1n \sum_{j=1}^n \delta_{x^{j,n}_t}.
	\end{equation}
	Observe that $\nu^n$ is apriori a probability measure on the continuous trajectories with values in $\R^d$, i.e. $\nu^n \in \cP (C([0,T], \R^d))$, however in many instances we rather consider its projection $(\nu^n_t)_{t \in [0,T]} \in C([0,T], \cP(\R^d))$ as continuous function over the probability measures on $\R^d$. This last object does not carry the information of the time dependencies between time marginals, but is in our case more suitable when studying \eqref{eq:bm} in the limit for $n$ which tends to infinity.
	
	\subsubsection{Known results}
	Fix  an initial probability measure $\nu_0 \in \cP(\R^d)$. Whenever $(x^i_0)_{i \in \N}$ are taken to be IID random variables sampled from $\nu_0$, or $\nu_0$ has a finite moment and $\nu^n_0$ weakly converges to it, it is well known (e.g. \cite[Theorem 1.4]{sznitman} and \cite[Theorem 3.1]{cf:CDFM19}) that $\nu^n$ converges (in a precise sense depending on the setting) to the solution of the following PDE
	\begin{equation}
	\label{eq:McKean-Vlasov}
	\begin{cases}
	\; \, \partial_t\nu_t=\frac{1}{2}\Delta \nu_t - \text{div}[\nu_t (\Gamma*\nu_t)],\\
	\restriction{\nu}{t=0}=\nu_0,
	\end{cases}
	\end{equation}
	for $t \in [0,T]$ and where $*$ denotes the integration with respect to the second argument, i.e. for $\mu \in \cP(\R^d)$
	\[
	(\Gamma*\mu)(x)=\int_{\R^d}\Gamma(x, y) \, \mu( \dd y), \quad x \in \R^d.
	\]
	Equation \eqref{eq:McKean-Vlasov} is usually called McKean-Vlasov or non-linear Fokker-Planck equation.
	
	\begin{rem}
		Observe that requiring IID initial conditions is not an innocent assumption as they are, in particular, exchangeable, see \cite[\S I.2]{sznitman} for more on this perspective. From an applied viewpoint, independence is often a hypothesis that we do not want to assume, see e.g. \cite[Example II]{cf:DGL}.
	\end{rem}

	A solution to \eqref{eq:McKean-Vlasov} is linked to the following non-linear process:
	\begin{equation}
	\label{eq:nonLinearProcess}
	\begin{cases}
	x_t = x_0 + \int_0^t \int_{\R^d} \Gamma (x_s, y) \, \nu_s (\dd y) \dd s + B_t, \\
	\nu_t = \emph{Law} \, (x_t),
	\end{cases}
	\end{equation}
	where $B$ is a Brownian motion independent of $(B^i)_{i \in \N}$ and $x_0$. It is well-known that  $\nu = (\nu_t)_{t \in [0,T]}$ is a solution to \eqref{eq:McKean-Vlasov} if and only if the non-linear process $(x_t)_{t\in[0,T]}$ in \eqref{eq:nonLinearProcess} exists and is such that $Law\,(x_t) = \nu_t$ for every $t \in [0,T]$.
	
	We have to following theorem.
	
	\begin{theorem}[{\cite[Theorem 1.1]{sznitman}}]
		\label{thm:sznitman}
		Suppose $\Gamma$ is bounded and Lipschitz and $x_0$ is a random variable with law $\nu_0 \in \cP(\R^d)$. Then, system \eqref{eq:nonLinearProcess} has a unique solution $(x_t)_{t \in [0,T]}$.
		
		Moreover, if $\nu=(\nu_t)_{t\in [0,T]}$ is the law of $(x_t)_{t \in [0,T]}$, then $\nu \in C([0,T],\cP(\R^d))$ and it solves the McKean-Vlasov equation \eqref{eq:McKean-Vlasov} in the weak sense.
	\end{theorem}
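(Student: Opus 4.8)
The plan is to run McKean's fixed-point argument on probability measures over path space, and then to pass from the nonlinear process to the weak form of \eqref{eq:McKean-Vlasov} via Itô's formula. First I would work on $\cP(\cC)$ with $\cC := C([0,T],\R^d)$, metrized by the truncated Wasserstein distance
\[
D_t(m,m') := \inf_{\pi}\int_{\cC\times\cC}\Big(1\wedge\sup_{s\le t}|x_s-y_s|\Big)\,\pi(\dd x,\dd y),
\]
the infimum over couplings $\pi$ of $m,m'$; then $(\cP(\cC),D_T)$ is complete. Given $m$ with time marginals $(m_s)_{s\in[0,T]}$, the drift $b^m(s,x):=\int_{\R^d}\Gamma(x,y)\,m_s(\dd y)$ is bounded by $\norm{\Gamma}_\infty$ and globally Lipschitz in $x$ with constant $\mathrm{Lip}(\Gamma)$, uniformly in $s$; hence the linear SDE $\dd X_t = b^m(t,X_t)\,\dd t + \dd B_t$, $X_0 = x_0$, has a unique strong solution on $[0,T]$, and I set $\Phi(m):=\mathrm{Law}\big((X_t)_{t\in[0,T]}\big)$. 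A solution of \eqref{eq:nonLinearProcess} is exactly a fixed point of $\Phi$.

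For the contraction estimate I would couple the solutions $X,X'$ associated to $m,m'$ using the same Brownian motion and initial datum. Since $\Gamma$ is Lipschitz in both arguments, $|b^m(s,x) - b^{m'}(s,x)| \le \mathrm{Lip}(\Gamma)\,W_1(m_s,m'_s) \le \mathrm{Lip}(\Gamma)\,D_s(m,m')$, so that $R_t := \sup_{s\le t}|X_s - X'_s|$ satisfies $R_t \le \mathrm{Lip}(\Gamma)\int_0^t\big(R_s + D_s(m,m')\big)\,\dd s$; Grönwall together with the truncation gives $D_t(\Phi(m),\Phi(m')) \le C\int_0^t D_s(m,m')\,\dd s$ with $C$ depending only on $T$ and $\mathrm{Lip}(\Gamma)$. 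Iterating yields $D_T\big(\Phi^{(k)}(m),\Phi^{(k)}(m')\big) \le \frac{(CT)^k}{k!}\,D_T(m,m')$, so a power of $\Phi$ is a strict contraction and $\Phi$ has a unique fixed point $\nu\in\cP(\cC)$; this proves existence and uniqueness of the nonlinear process.

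Finally, write $\nu_t$ for the time-$t$ marginal of the fixed point. The a.s.\ continuity of $t\mapsto X_t$ together with the uniform bound on its increments gives, by dominated convergence, that $t\mapsto\int_{\R^d}\varphi\,\dd\nu_t$ is continuous for every $\varphi\in C_b(\R^d)$, i.e.\ $\nu\in C([0,T],\cP(\R^d))$. Applying Itô's formula to $\varphi(X_t)$ for $\varphi\in C_c^\infty(\R^d)$ and taking expectations kills the martingale term and produces
\[
\int_{\R^d}\varphi\,\dd\nu_t = \int_{\R^d}\varphi\,\dd\nu_0 + \int_0^t\!\!\int_{\R^d}\Big(\tfrac12\Delta\varphi + \nabla\varphi\cdot(\Gamma*\nu_s)\Big)\,\dd\nu_s\,\dd s,
\]
which is exactly the weak formulation of \eqref{eq:McKean-Vlasov}; alternatively one may simply invoke the equivalence between \eqref{eq:nonLinearProcess} and \eqref{eq:McKean-Vlasov} recalled above.

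The argument is classical, and the only genuinely delicate point is the contraction step: one must make the measure-dependence bound on $b^m$ compatible with the truncated metric $D_t$ and extract from it a time-integrated estimate, so that the Grönwall/iteration scheme yields a \emph{global} fixed point on all of $[0,T]$ with \emph{no} moment assumption on the initial law $\nu_0$. Boundedness of $\Gamma$ is precisely what makes the truncation in $D_t$ harmless and removes any need for integrability of $x_0$.
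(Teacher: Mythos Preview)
Your proposal is correct and is essentially Sznitman's original argument; note, however, that the paper does \emph{not} give its own proof of this theorem at all---it is stated with the attribution ``\cite[Theorem 1.1]{sznitman}'' and used as a black box (namely, to furnish a weak solution in $C([0,T],\cP(\R^d))$ at the very end of the proof of Theorem~\ref{thm:main}). So there is nothing to compare against: you have sketched the classical contraction-on-$\cP(\cC)$ proof that the citation points to.

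One minor tightening: in the contraction step you need $\Gamma(x,\cdot)$ to be Lipschitz for the truncated distance $1\wedge|\cdot|$, which follows because $\Gamma$ is both bounded and Lipschitz; this is implicit in your closing remark but should be made explicit where you first write $|b^m(s,x)-b^{m'}(s,x)|\le \mathrm{Lip}(\Gamma)\,D_s(m,m')$, since the constant there is really $\mathrm{Lip}(\Gamma)\vee 2\norm{\Gamma}_\infty$.
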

	
	\medskip
	
	\subsection{Set-up and notations}
	Let $W^{m,p}=W^{m,p} (\R^d)$ be the standard Sobolev space with $m\in \N$ and $p \in [1,\infty)$. Classical results as \cite[Theorem 4.12]{cf:AdaFou} assure that
	\begin{equation}
	\label{d:sobIn}
	W^{m,p}_0 (\R^d) = W^{m,p}(\R^d) \subset  C_b (\R^d ) \quad \text{ whenever } m p > d,
	\end{equation}
	where $C_b (\R^d )$ is the space of continuous bounded functions on $\R^d$. The space  $W^{m,p}_0 (\R^d)$ is the closure of $C^\infty_0 (\R^d)$, i.e., the space of smooth functions with compact support, with respect to the norm
	\begin{equation*}
	\norm{\varphi}_{W^{m,p}} := \left(\sum_{0 \leq |\alpha| \leq m} \int_{\R^d} \left|\partial^\alpha \varphi (x)\right|^p \dd x\right)^p, \quad \varphi \in C^\infty_0 (\R^d),
	\end{equation*}
	where $\alpha =\left(\alpha_1,\dots,\alpha_d\right)$ with $|\alpha| = \alpha_1 + \dots + \alpha_d$ and $\partial^\alpha = \left(\partial_{x_1}\right)^{\alpha_1} \left(\partial_{x_2}\right)^{\alpha_2}$...$\left(\partial_{x_d}\right)^{\alpha_d}$.

	Fix $p=2$ and $m > d/2$, we consider the Hilbert space $H^{m} := W ^{m,2}(\R^d)$, with norm denoted by $\norm{\cdot}_m$ and its dual space $H^{-m}:={(H^{m})}^*$ with the standard dual norm defined by $\norm{\mu}_{-m} := \sup_{\norm{h}_m\leq 1} \langle \mu, h \rangle_{-m,m}$. The action the action of $H^{-m}$ on $H^{m}$ is denoted by $\langle \cdot, \cdot \rangle_{-m,m}$. By duality, if follows from \eqref{d:sobIn} that
	\begin{equation*}
	\cP (\R^d) \subset C_b (\R^d )^* \subset H^{-m}.
	\end{equation*}
	
	We denote by $( \cdot, \cdot )_{m}$ the scalar product in $H^m$ and by $\langle \cdot, \cdot \rangle$ the natural action of a probability measure on test functions, i.e., for $\nu \in \cP(\R^d)$ and a smooth function $h$, we write $\langle\nu, h \rangle = \int_{\R^d} h(x) \nu(\dd x)$. We often abuse of notation denoting the density of a probability measure by the probability measure itself.
	
	Let $\nu\in \mathcal{P}(\R^d)$, and thus $\nu \in H^{-m}$, and let $\tilde{\nu}\in H^m$ be its Riesz representative, then we have for any $h\in H^m$
	\[
	\langle \nu, h\rangle=\nu(h)=( \tilde{\nu}, h)_m=\langle \nu, h\rangle_{-m, m}
	\]
	and therefore
	\[
	|\langle \nu, h\rangle|\leq \norm{\nu}_{-m} \norm{h}_m.
	\]
	In particular
	\[
	\sup_{\norm{h}_m\leq 1} \langle \nu, h\rangle= \norm{\nu}_{-m}.
	\]
	
	\medskip
	
	If $(\mu^n)_{n\in\N}$ is a sequence of probability measures which weakly converges to some $\mu \in \cP(\R^d)$, we use the notation $\mu^n \rightharpoonup \mu$. For weak convergence and weak-*-convergence of a sequence $(x_n)_n\subset X$  to some $x \in X$, $X$ being a Banach space, we use the standard notations $x_n\rightharpoonup x$ and $x_n\overset{*}{\rightharpoonup}x$ respectively.
	
	As introduced in \cite{cf:Met82}, we will use $\norm{\cdot}_{-m}$ as distance between probability measures and our results will be expressed with respect to this topology.
	
	The various constants in the paper will always be denoted by $C$ or $C_\alpha$ to emphasize the dependence on some parameter $\alpha$. Their value may change from line to line.
	
	\section{Main result}
	Before stating the main result, we give the definition of weak-mild solutions to \eqref{eq:McKean-Vlasov} in the Hilbert space $H^m$. We denote by $S=(S_t)_{t \in [0,T]}$ the analytic semigroup generated by the Laplacian operator $\tfrac \Delta 2$ on $H^m$. We refer to Appendix A for general properties of $S$.
	
	\medskip
	
	\begin{definition}[$m$-weak-mild solutions to McKean-Vlasov PDEs]
		Let $\nu_0$ be an element in $H^{-m}$. We call $\nu \in L^\infty([0,T], H^{-m})$ an $m$-weak-mild solution to the problem \eqref{eq:McKean-Vlasov}, if for every $h\in H^m$ and $t \in [0,T]$, it holds
		\begin{equation}
		\label{d:weakMildSol}
		\langle \nu_t, h\rangle_{-m, m}=\langle \nu_0, S_th\rangle_{-m, m}+\int_0^t\langle  \nu_s, (\nabla S_{t-s}h) (\Gamma*\nu_s)\rangle_{-m, m} \dd s.
		\end{equation}
	\end{definition}
	If $\Gamma$ is sufficiently regular, uniqueness can be readily established by using a classical argument. This is illustrated in the next proposition.
	\begin{proposition}[Uniqueness]
		Suppose that $\Gamma(\cdot_x, \cdot_y) \in H^m_yW^{m, \infty}_x$, i.e.,
		\label{prop:uniqueness}
		\begin{equation}
		\label{h:Gamma}
		\norm{\Gamma(\cdot_x, \cdot_y)}_{H^m_yW^{m, \infty}_x}= \max_{|\beta|\leq m} \norm{  \sum_{|\alpha|\leq m} \int_{\R^d} \left( \partial^\beta_x\partial^\alpha_y\Gamma(x, y)\right)^2 \dd y }_{L^\infty_x}<\infty.
		\end{equation}
		Then, $m$-weak mild solutions to \eqref{eq:McKean-Vlasov} are unique. 
	\end{proposition}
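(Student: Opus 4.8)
The strategy is the classical one for mild formulations: take two solutions with the same datum, subtract them, and close a (singular) Gr\"onwall estimate in the norm $\norm{\cdot}_{-m}$. Let $\nu^1,\nu^2\in L^\infty([0,T],H^{-m})$ be two $m$-weak-mild solutions of \eqref{eq:McKean-Vlasov} with the same $\nu_0$, and put $\rho_t:=\nu^1_t-\nu^2_t\in H^{-m}$, so that $s\mapsto\norm{\rho_s}_{-m}$ is bounded on $[0,T]$. Subtracting the two instances of \eqref{d:weakMildSol} and using $a_1b_1-a_2b_2=(a_1-a_2)b_1+a_2(b_1-b_2)$ on the pairs $(\nu^i_s,\Gamma*\nu^i_s)$, the initial terms cancel and one obtains, for every $h\in H^m$ and $t\in[0,T]$,
\[
\langle \rho_t,h\rangle_{-m,m}=\int_0^t\Big[\langle \rho_s,(\nabla S_{t-s}h)(\Gamma*\nu^1_s)\rangle_{-m,m}+\langle \nu^2_s,(\nabla S_{t-s}h)(\Gamma*\rho_s)\rangle_{-m,m}\Big]\dd s .
\]

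To estimate the integrand I would use two ingredients. First, the analytic-semigroup smoothing bound $\norm{\nabla S_\tau h}_m\leq C\tau^{-1/2}\norm{h}_m$ for $\tau\in(0,T]$, which follows from the properties of $S$ recalled in Appendix A (on the Fourier side it is the elementary estimate $\sup_\xi |\xi|e^{-\tau|\xi|^2/2}\leq C\tau^{-1/2}$, which commutes with the multiplier $(1+|\xi|^2)^{m/2}$). Second, the ``tame'' product estimate $\norm{fg}_m\leq C\norm{f}_m\norm{g}_{W^{m,\infty}}$, a direct consequence of the Leibniz rule and H\"older's inequality. The role of hypothesis \eqref{h:Gamma} is precisely to guarantee that $\Gamma*\mu\in W^{m,\infty}$ with $\norm{\Gamma*\mu}_{W^{m,\infty}}\leq C\norm{\mu}_{-m}$ for every $\mu\in H^{-m}$: indeed $\partial^\beta_x(\Gamma*\mu)(x)=\langle\mu,\partial^\beta_x\Gamma(x,\cdot)\rangle_{-m,m}$, so $|\partial^\beta_x(\Gamma*\mu)(x)|\leq\norm{\mu}_{-m}\norm{\partial^\beta_x\Gamma(x,\cdot)}_m$, which is bounded uniformly in $x$ for $|\beta|\leq m$ by \eqref{h:Gamma}. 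Applying the product estimate with $f=\nabla S_{t-s}h$ and $g=\Gamma*\nu^1_s$ (respectively $g=\Gamma*\rho_s$), then the semigroup bound, and finally $\sup_{s\in[0,T]}\norm{\nu^i_s}_{-m}<\infty$, each of the two terms in the integrand is bounded by $C(t-s)^{-1/2}\norm{h}_m\norm{\rho_s}_{-m}$.

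Taking the supremum over $\norm{h}_m\leq1$ therefore yields the closed inequality
\[
\norm{\rho_t}_{-m}\leq C\int_0^t (t-s)^{-1/2}\norm{\rho_s}_{-m}\dd s,\qquad t\in[0,T],
\]
for the bounded function $t\mapsto\norm{\rho_t}_{-m}$. A singular Gr\"onwall lemma (of Henry type) then forces $\norm{\rho_t}_{-m}=0$ for all $t$, i.e. $\nu^1=\nu^2$ in $L^\infty([0,T],H^{-m})$; alternatively, one iterates the inequality finitely many times so that the kernel $(t-s)^{-1/2}$ is replaced by a bounded one and concludes with the ordinary Gr\"onwall lemma.

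I do not expect a genuine obstacle here: the argument is standard once the functional setting is fixed, which is why the statement is phrased as a ``readily established'' proposition. The only points that require care are bookkeeping ones: the product in \eqref{d:weakMildSol} is the $\R^d$ inner product of the vector $\nabla S_{t-s}h$ and the vector $\Gamma*\nu_s$, so the tame estimate must be applied componentwise and summed; the identity $\partial^\beta_x(\Gamma*\mu)=\langle\mu,\partial^\beta_x\Gamma(x,\cdot)\rangle_{-m,m}$ must be justified by differentiating under the duality pairing (legitimate by the $x$-regularity of $\Gamma$ encoded in \eqref{h:Gamma}); and one should verify that \eqref{h:Gamma} indeed controls $\norm{\Gamma*\mu}_{W^{m,\infty}}$ with the stated constant, keeping track of the squared normalisation used there.
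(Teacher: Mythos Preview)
Your proposal is correct and follows essentially the same approach as the paper: subtract the two mild formulations, split the bilinear term via $a_1b_1-a_2b_2=(a_1-a_2)b_1+a_2(b_1-b_2)$, bound each piece using the product estimate $\norm{(\nabla S_{t-s}h)(\Gamma*\mu)}_m\leq C(t-s)^{-1/2}\norm{h}_m\norm{\mu}_{-m}\norm{\Gamma}_{H^m_yW^{m,\infty}_x}$, and close with a singular Gr\"onwall inequality. If anything, you spell out in more detail than the paper why \eqref{h:Gamma} gives $\norm{\Gamma*\mu}_{W^{m,\infty}}\leq C\norm{\mu}_{-m}$, which the paper records only implicitly in its bound labelled \eqref{critical_bound}.
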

	\begin{proof}
		Suppose $\nu, \rho\in L^\infty([0, T], H^{-m})$ are two $m$-weak mild solutions. Then, taking the difference between the two equations \eqref{d:weakMildSol}, one obtains that for every $h\in H^m$
		\begin{equation*}
		\begin{split}
		\langle \nu_t-\rho_t, h\rangle_{-m, m}=& \int_0^t \langle \nu_s-\rho_s, (\nabla S_{t-s}h)(\Gamma*\nu_s)\rangle_{-m, m} \dd s + \\
		&+\int_0^t \langle \rho_s, (\nabla S_{t-s}h)(\Gamma*(\nu_s-\rho_s))\rangle_{-m, m} \dd s.
		\end{split}
		\end{equation*}
		In particular,
		\begin{equation*}
		\begin{split}
		\norm{\nu_t - \rho_t}_{-m} \leq & \int_0^t \norm{\nu_s-\rho_s}_{-m} \norm{(\nabla S_{t-s}h)(\Gamma*\nu_s)}_m \dd s + \\
		& + \int_0^t \norm{\rho_s}_{-m} \norm{(\nabla S_{t-s}h)(\Gamma*(\nu_s-\rho_s))}_m \dd s.
		\end{split}
		\end{equation*}
		Observe that, for $\mu \in H^{-m}$ it holds that
		\begin{equation}
		\label{critical_bound}
		\begin{split}
		\norm{(\nabla S_{t-s}h)(\Gamma*\mu)}_m &\leq \norm{\nabla S_{t-s}h}_m \norm{\Gamma*\mu}_{W^{m, \infty}} \leq \\
		& \leq \frac{C}{\sqrt{t-s}} \norm{h}_m \norm{\mu}_{-m} \norm{\Gamma(\cdot_x, \cdot_y)}_{H^m_yW^{m, \infty}_x},
		\end{split}
		\end{equation}
		where we have used the properties of the semigroup. Using the continuous embedding of $\cP(\R^d)$ into $H^{-m}$, we conclude that there exists a (new) constant $C>0$:
		\begin{equation*}
		\norm{\nu_t - \rho_t}_{-m} \leq C \norm{\Gamma(\cdot_x, \cdot_y)}_{H^m_yW^{m, \infty}_x} \int_0^t \frac 1 {\sqrt{t-s}}\norm{\nu_s-\rho_s}_{-m} \dd s.
		\end{equation*}
		A Gronwall-like lemma yields the proof.
	\end{proof}
	
	\medskip
	
	We are ready to state the main result.
	
	\medskip
	
	%	\begin{theorem}
	%		\label{thm:main}
	%		Assume $m> d/2 + 3$ and $\Gamma(\cdot_x, \cdot_y) \in H^m_yW^{m, \infty}_x$. There exist a subsequence of $\nu^n$ and $\nu_0 \in H^{-m}$  such that
	%		\begin{equation}
	%			\nu^n_0 \rightharpoonup \nu_0 \ \emph{in}\ H^{-m}, \quad \bbP\text{-a.s.}.
	%		\end{equation}
	%		and
	%		\begin{equation}
	%			\nu^n \overset{*}{\rightharpoonup} \nu \ \emph{in}\ L^\infty([0,T], H^{-m}), \quad \bbP\text{-a.s.},
	%	\end{equation}	
	%		where $\nu$ is the $m$-weak-mild solution associated to \eqref{eq:McKean-Vlasov}. Moreover,
	%		\begin{enumerate}
	%				\item If $\nu^n_0$ weakly converges to $\nu_0$ $\bbP$-a.s., then $\nu^n$ does.
	%				\item If $\nu_0$ is a probability measure, then $\nu \in L^\infty ([0,T], \cP(\R^d))$.
	%		\end{enumerate}
	%	In particular, if $\nu^n_0$ weakly converges to $\nu_0$ in $\cP(\R^d)$ $\bbP$-a.s., then $\nu^n$ weakly converges to $\nu$ in $L^{\infty}([0,T], \cP(\R^d))$ $\bbP$-a.s..
	%	\end{theorem}
	%\textcolor{orange}{In my opinion, the theorem should rather be}
	\begin{theorem}
		\label{thm:main}
		Assume $m> d/2 + 3$ and $\Gamma(\cdot_x, \cdot_y) \in H^m_yW^{m, \infty}_x$. If $\nu_0\in H^{-m}$, then there exists $\nu\in L^\infty([0,T], H^{-m})$, unique $m$-weak-mild solution to \eqref{d:weakMildSol}. Suppose that the initial empirical measure associated to the particle system \eqref{eq:bm} is such that
		\[
		\nu^n_0 \rightharpoonup\nu_0 \qquad \mbox{in}\ H^{-m}
		\]
		in probability. Then, the empirical measure $\nu^n$ of \eqref{eq:bm} satisfies
		\[
		\nu^n\overset{*}{\rightharpoonup} \nu \qquad \mbox{in}\ L^{\infty}([0,T], H^{-m})
		\]
		in probability. 
		
		Moreover, if $\nu_0\in \mathcal{P}(\R^d)$, then $\nu$ is the unique weak solution of the McKean-Vlasov equation \eqref{eq:McKean-Vlasov} and, in particular, $\nu \in C([0, T], \cP(\R^d))$.
	\end{theorem}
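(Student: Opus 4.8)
The plan is to read off the stochastic mild equation satisfied by $\nu^n$ for each fixed $n$, to show that the genuinely stochastic term in it is negligible as $n\to\infty$ (the content of this section), and then to identify any subsequential weak-$*$ limit of $\nu^n$ with the unique $m$-weak-mild solution via Proposition \ref{prop:uniqueness}. Applying It\^o's formula to $h(x^{j,n}_t)$ for $h\in H^m$ (legitimate since $m>d/2+3$ forces $H^m\hookrightarrow C^2_b$), averaging over $j$, and rewriting the resulting weak equation in mild form via the semigroup $S$ generated by $\tfrac12\Delta$, one obtains, for every $h\in H^m$ and $t\in[0,T]$,
\[
\langle\nu^n_t,h\rangle=\langle\nu^n_0,S_th\rangle+\int_0^t\langle\nu^n_s,(\nabla S_{t-s}h)(\Gamma*\nu^n_s)\rangle\,\dd s+\langle Z^n_t,h\rangle,
\]
where $Z^n_t\in H^{-m}$ is the stochastic convolution $h\mapsto\frac1n\sum_{j=1}^n\int_0^t\nabla(S_{t-s}h)(x^{j,n}_s)\cdot\dd B^j_s$. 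Since $\nu^n_t\in\cP(\R^d)$ and $\cP(\R^d)\hookrightarrow H^{-m}$ continuously, $(\nu^n)_n$ is \emph{pathwise} bounded in $L^\infty([0,T],H^{-m})=(L^1([0,T],H^m))^*$; the predual being separable, bounded sets are weak-$*$ sequentially compact and metrizable, so along subsequences $\nu^{n_k}\overset{*}{\rightharpoonup}\nu$ for some (a priori $\omega$-dependent) limit.

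Next I would extract the time regularity hidden in the mild equation: uniformly in $n$, the drift term is bounded in $H^{-m}$ and $\tfrac12$-H\"older into $H^{-m}$, while $t\mapsto S_t\nu^n_0$ is Lipschitz into $H^{-m-2}$ by analyticity of $S$; and the noise estimates of this section yield, in probability, $\|Z^n\|_{C^\gamma([0,T],H^{-m})}\to0$ for some $\gamma\in(1/3,1/2)$ (this is exactly where the rough-path and self-normalized-process arguments enter, and where the slack $m>d/2+3$ over the minimal $m>d/2$ is spent). Hence $t\mapsto\langle\nu^n_t,h\rangle$ is equi-H\"older for every $h$ in a countable dense subset of $H^m$, which upgrades weak-$*$ convergence to $\nu^{n_k}_t\rightharpoonup\nu_t$ weakly in $H^{-m}$ for \emph{every} $t\in[0,T]$; combined with the hypothesis $\nu^n_0\rightharpoonup\nu_0$, the limit has initial value $\nu_0$.

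I would then pass to the limit in the mild equation tested against $h\in H^m$ and $\psi\in L^1([0,T])$. The initial term converges because $\int_0^T\psi(t)S_th\,\dd t\in H^m$ and $\nu^n_0\rightharpoonup\nu_0$; the stochastic term converges to $0$ by the estimate just quoted (along a further, a.s.-convergent subsequence). \textbf{The main obstacle is the nonlinear term} $\int_0^t\langle\nu^n_s,(\nabla S_{t-s}h)(\Gamma*\nu^n_s)\rangle\,\dd s$: it is \emph{quadratic} in $\nu^n_s$, so weak convergence alone does not obviously pass to the limit, and — precisely because no moment bound is imposed on the initial data — the family $\{\nu^n_s\}$ need not be tight, so no strong compactness of $\{\nu^n_s\}$ in any $H^{-m'}$ is available on $\R^d$. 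The resolution exploits the regularising effect of the interaction: since $\Gamma(\cdot_x,\cdot_y)\in H^m_yW^{m,\infty}_x$ we have $\Gamma(x,\cdot)\in H^m$ for each $x$, so $\Gamma*\nu^n_s(x)=\langle\nu^n_s,\Gamma(x,\cdot)\rangle\to\Gamma*\nu_s(x)$ from the weak $H^{-m}$ convergence, while $\|\Gamma*\nu^n_s\|_{W^{m,\infty}}$ stays bounded; fed through the smoothing estimate \eqref{critical_bound} and dominated convergence in $s$ (the kernel $(t-s)^{-1/2}$ is integrable), this identifies the limit of the nonlinear term with $\int_0^t\langle\nu_s,(\nabla S_{t-s}h)(\Gamma*\nu_s)\rangle\,\dd s$. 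Thus $\nu$ is an $m$-weak-mild solution with datum $\nu_0$; by Proposition \ref{prop:uniqueness} it is unique, hence deterministic and independent of the subsequence, so the whole sequence satisfies $\nu^n\overset{*}{\rightharpoonup}\nu$ in probability. (Uniqueness is Proposition \ref{prop:uniqueness}; existence for a general $\nu_0\in H^{-m}$ not arising as a weak limit of empirical measures is obtained separately by a Picard iteration in $L^\infty([0,T],H^{-m})$ based again on \eqref{critical_bound} and the integrability of $s\mapsto s^{-1/2}$.)

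Finally, for the statement ``moreover, if $\nu_0\in\cP(\R^d)$'', I would invoke Theorem \ref{thm:sznitman}: it produces $\bar\nu\in C([0,T],\cP(\R^d))$, the law of the non-linear process \eqref{eq:nonLinearProcess}, solving \eqref{eq:McKean-Vlasov} weakly. Freezing the vector field $V_s:=\Gamma*\bar\nu_s$ (bounded, Lipschitz in space, continuous in time) and applying the variation-of-constants formula for $S$ to the linear Fokker--Planck equation $\partial_t\bar\nu_t=\tfrac12\Delta\bar\nu_t-\mathrm{div}[\bar\nu_tV_t]$ shows that $\bar\nu$ satisfies \eqref{d:weakMildSol}; since $\bar\nu_t\in\cP(\R^d)\hookrightarrow H^{-m}$ gives $\bar\nu\in L^\infty([0,T],H^{-m})$, Proposition \ref{prop:uniqueness} forces $\nu=\bar\nu$, so $\nu\in C([0,T],\cP(\R^d))$ and is the unique weak solution of the McKean--Vlasov equation.
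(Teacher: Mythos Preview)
Your overall architecture is right, and two ingredients match the paper closely: the derivation of the stochastic mild equation, and the ``moreover'' clause via Theorem~\ref{thm:sznitman} (showing that Sznitman's weak solution is an $m$-weak-mild solution and invoking Proposition~\ref{prop:uniqueness}). Your observation that the uniform bound $\sup_{t}\|\nu^n_t\|_{-m}\le C$ follows \emph{directly} from $\nu^n_t\in\cP(\R^d)$ and the embedding $\cP(\R^d)\hookrightarrow H^{-m}$ is in fact simpler than the paper's route in Lemma~\ref{lem:alaoglu}, which Gronwalls the mild equation using the rough-path bound.

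The genuine gap is the sentence ``the noise estimates of this section yield, in probability, $\|Z^n\|_{C^\gamma([0,T],H^{-m})}\to0$''. The two noise bounds actually proved are: (i) the rough-path estimate of Lemma~\ref{lem:wbound}, giving $|Z^n_t(h)|\le C_\alpha(\omega)(1+t)^{3\alpha}\|h\|_m$ pathwise, where the random constant $C_\alpha(\omega)$ is an average of rough-path norms of $(B^i,\bbB^i)$ and does \emph{not} tend to zero with $n$; and (ii) the self-normalised inequality of Lemma~\ref{lem:wpoint}, giving $\mathbb{E}\bigl[\sup_t|Z^n_t(h)|^2\bigr]\le Cn^{-1}\|h\|_m^2$ only for each \emph{fixed} $h$. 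Neither yields $\sup_t\|Z^n_t\|_{-m}\to0$, let alone convergence in $C^\gamma_tH^{-m}$; the paper says this explicitly in Remark~\ref{rem:topology}. Your downstream chain---equi-H\"older regularity of $t\mapsto\langle\nu^n_t,h\rangle$, hence pointwise weak convergence $\nu^{n_k}_t\rightharpoonup\nu_t$ for every $t$, hence $\Gamma*\nu^n_s(x)\to\Gamma*\nu_s(x)$ for all $s$ and dominated convergence in the nonlinear term---rests entirely on this unproved claim.

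The paper circumvents pointwise-in-$t$ convergence. Bound (i) is used only to realise $Z^n$ pathwise as an element of $L^\infty_tH^{-m}$ (so the mild equation makes sense as an identity in $H^{-m}$); bound (ii) is used, along a further a.s.\ subsequence, to kill $Z^{n_k}_t(h)$ for the fixed $h$ under consideration. For the nonlinear term the paper does \emph{not} argue pointwise in $s$: after the usual bilinear splitting it observes that both test functions $s\mapsto\mathbf 1_{[0,t]}(s)(\nabla S_{t-s}h)(\Gamma*\bar\nu_s)$ and $s\mapsto\mathbf 1_{[0,t]}(s)\langle\nu^n_s(\dd x),(\nabla S_{t-s}h)(x)\Gamma(x,\cdot)\rangle$ lie in $L^1([0,T],H^m)$ with bounds controlled by \eqref{critical_bound} and the integrable kernel $(t-s)^{-1/2}$, and then pairs them against the weak-$*$ convergence $\nu^{n_k}\overset{*}{\rightharpoonup}\bar\nu$ in $(L^1_tH^m)^*=L^\infty_tH^{-m}$. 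This predual argument (Lemma~\ref{lem:limit}) is exactly the step that replaces your appeal to pointwise convergence, and it is what makes the weaker fixed-$h$ noise estimate sufficient.
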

	
	%\begin{rem}
	%Note that the classical result is that $(\nu^n)_n$ converges weak-* in $L^1(\Omega, C([0,T], \cP(\R^d)))$, where $\cP(\R^d)$ is equipped with the topology of weak convergence.  In particular, this implies that for all $h\in C_b(\R^d)$
	%\[
	%\langle \nu^n_t, h\rangle\rightarrow \langle \nu_t, h\rangle
	%\]
	%$\mathbb{P}$ almost surely along a subsequence, while our result only allows for the above convergence provided $h\in H^m$. The possibility to work with an SPDE formulation therefore comes at the cost of having to impose additional regularity on the space of test functions. 
	%\end{rem}
	
	\medskip
	
	\subsection{Discussion}
	Theorem \ref{thm:main} shows a law of large numbers in $L^\infty([0,T],H^{-m})$ by directly studying the evolution of the empirical measure. Contrary to most of the existing proofs in the literature, it does not establish any trajectorial estimates on system \eqref{eq:bm} and does not invoke propagation of chaos techniques, as, e.g., in \cite{cf:mischlerMouhot13,sznitman}. This allows to deal with very general initial data: the weak convergence of $(\nu^n_0)_{n \in \N}$ in $H^{-m}$ \textendash{} which is implied by the weak convergence in $\cP(\R^d)$ \textendash{} suffices.
	
	Working in $H^{-m}$ for $m>d/2$ assures a bound on $\norm{\nu}_{-m}$ which is uniform in $\nu \in \cP(\R^d)$ thanks to the continuous embedding of $\cP(\R^d)$ in $H^{-m}$ and the duality properties of probability measures, see Lemma \ref{lem:embPro}. By exploiting the equation satisfied by $\nu^n$, we are able to establish a compactness property for $(\nu^n)_{n \in \N}$, usually hard to obtain in $\cP(\R^d)$, and which represents our main tool for obtaining the existence both of the limit solution and of a convergent subsequence.
	
	Weak-mild solutions make sense for any $m>d/2$, yet we have to require the stronger condition $m> d/2 +3$ in order to give a pathwise meaning to the stochastic term present in the dynamics. This implies that $\Gamma$ is $C^3$. In this last case, it is already known that a weak solution to the McKean-Vlasov equation \eqref{eq:McKean-Vlasov} exists for any initial probability measure $\nu_0$. Since weak solutions are weak-mild solutions, as we will show in the sequel, a byproduct of our main result is the uniqueness of (weak) solutions to equation \eqref{eq:McKean-Vlasov}.
	
	The particle system \eqref{eq:bm} represents an interaction setting where no transport is present in the dynamics. We have decided not to include other terms so as to keep the underlying ideas and techniques as clear as possible. However, all our arguments readily extend to the more general case of interacting particles given by
	\begin{equation}
	    \dd x^{i,n}_t = F(x^{i,n}_t) \dd t + \frac{1}{n}\sum_{j=1}^n \Gamma(x^{i,n}_t, x^{j,n}_t)\dd t+ \dd B^i_t,
	\end{equation}
	provided that $F \in H^m$.
	
	Finally, we point out that the need of rather high regularity in $\Gamma$ (and $F$) is an intrinsic requirement of rough paths theory and not of the particular class of models we are working with. In  particular, proving Theorem \ref{thm:main} independently of rough paths arguments would likely yield less restrictive regularity constraints on $\Gamma$. On the other hand, rough paths theory allows to give a pathwise definition of the stochastic partial differential equation satisfied by the empirical measure. Such viewpoint appears to be new in the literature. Moreover, the proposed strategy represents an application to the algebraic integration with respect to semigroups, presented in \cite{gubinelli2010}, that can be interesting on its own.
	
	%See Remark \ref{rem:topology} for more on this aspect.
	
	\begin{comment}
	In this sense, Theorem \ref{thm:main} should hold for every $m>d/2$ and a full probabilistic proof is likely to exist. See Remark \ref{rem:topology} for more on this aspect.
	\textcolor{purple}{I would formulate it more cautiously as above, also so that the reviewer won't maybe think "oh, if a probabilistic proof is likely to exist, why don't you write that one and submit now already?"}
	\end{comment}
	
	%	\begin{enumerate}
	%	    \item Recover the weak convergence of $\nu^n$ in $C([0,T], \cP(\R^d))$;
	%	    \item Flandoli's bound and probabilistic proof for the noise term;
	%	    \item Explicit relation with Dean-Kawasaki equation;
	%	    \item Applications to multiplicative noise and common noise;
	%	    \item Example with graphs?
	%	\end{enumerate}
	
	\subsection{Comparison with the existing literature}
	Proving a law of large numbers by directly studying the empirical measure and not the single trajectories is the classical approach in the deterministic setting \cite{cf:Neu91,cf:Dob79}, i.e., when no Brownian motions are acting on system \eqref{eq:bm}. In the case of interacting diffusions, the idea of studying the equation satisfied by the empirical measure for a fixed $n$, comes from the two articles \cite{cf:BGP14,cf:LP17} and the recent \cite{cf:C19}, where a weak-mild formulation is derived and carefully studied. Contrary to our case, in \cite{cf:BGP14,cf:C19,cf:LP17} the particles live in the one dimensional torus which considerably simplifies the analysis; we refer to Remark \ref{rem:maxIne}.
	
	A Hilbertian approach for particle systems has already been discussed in \cite{cf:FM97}, where it is used to study the fluctuations of the empirical measure around the McKean-Vlasov limit. However, \cite{cf:FM97} does not make use of the theory of semigroups but instead requires strong hypothesis on the initial conditions which have to be IID and with finite $(4d+1)$-moment (see \cite[\S 3]{cf:FM97}). The evolution of the empirical measure \eqref{d:emp} is then studied in weighted Hilbert spaces (or, more precisely, in spaces of Bessel potentials) so as to fully exploit the properties of mass concentration given by the condition on the moments. Observe that we are not able to present a fluctuation result, given the lack of a suitable uniform estimate on the noise term.
	
	Studying the action of an analytic semigroup in the evolution of an interacting particle system has been recently proposed in similar settings; we refer to \cite{cf:Flandoli19,cf:Flandoli20} and references therein. This method is referred to as the semigroup approach. We want to stress that the cited works deal with smooth mollified empirical measures and work in a weaker topology (with respect to the time variable) than the one expressed in Theorem \ref{thm:main}.
	%\textcolor{purple}{"Namely, the stochastic convolution analysis in \cite{cf:Flandoli19,cf:Flandoli20} is substantially easier than the one presented here." I think the previous sentence is enough, I would remove this sentence}
	
	The strategies developed in \cite{cf:MMW15,cf:mischlerMouhot13,kolokoltsov11}, and further applied in the case of mean-field games in \cite{cf:MFG}, study the evolution of the joint law of system \eqref{eq:bm} and take a more abstract viewpoint. In particular, they study the system dynamics at the level of the flows and not directly addressing the empirical measure.
	
	Finally, observe that under a suitable change of the time-scale, the $n$-dependent SPDE satisfied by the empirical measure \eqref{d:emp} is the mild formulation of the Dean-Kawasaki equation \cite[Theorem 1]{dean_kawa_2019} and \cite{cf:KLvR19}.
	
		\subsection{Strategy of the proof}
	Using Itô's formula, we derive an equation satisfied by $\nu^n$ for every fixed $n\in\N$, which turns out to be the McKean-Vlasov PDE perturbed by some noise $w^n$, see Lemma \ref{mild_formulation}. This equation makes sense in $L^\infty([0,T],H^{-m})$ and in this space we study the convergence of $(\nu^n)_{n \in \N}$.
	
	The main challenge towards the proof of Theorem \ref{thm:main} is giving a meaning to $w^n$ and suitably controlling it. In Lemma \ref{lem:wbound}, we first give a pathwise definition of this term through rough paths theory, referring to Appendix \ref{A:integration} for a suitable theory of rough integration in our setting. This in turn will allow to show that $(\nu^n)_{n\in \N}$ is uniformly bounded in $L^\infty([0,T],H^{-m})$ and to extract a weak-* converging subsequence, see Lemma \ref{lem:alaoglu}.
	
	To show that a converging subsequence satisfies the weak-mild formulation \eqref{d:weakMildSol} in the limit, as shown in Lemma \ref{lem:limit}, we need a further step: the pointwise estimate of $w^n(h)$, for a fixed $h \in H^m$. Using a suitable decomposition of the semigroup and a maximal inequality for self-normalized processes, we are able to prove that $w^n(h)$ converges to zero in probability as $n$ diverges, see Lemma \ref{lem:wpoint}.
	%The probability estimate of $w^n$ is strong in the constant, which now vanishes as $n$ tends to infinity, but in a weaker topology with respect to the pathwise estimate, this last one being uniform in $h$.
	If on the one hand the rough paths bound cannot take advantage of the statistical independence of the Brownian motions and thus, cannot be improved in $n$; on the other hand the probability estimate does not suffice to define $w^n$ as an element of $L^\infty([0,T],H^{-m})$. We refer to Subsection \ref{ss:noise} and Remark \ref{rem:topology} for more on this aspect.
	
	The uniqueness of weak-mild solution, Proposition \ref{prop:uniqueness}, is the last ingredient to obtain that any convergent subsequence of $(\nu^n)_{n \in \N}$ admits a further subsequence that converges $\bbP$-a.s. to the same $\nu$ satisfying equation \eqref{d:weakMildSol}. This is equivalent to the weak-* convergence in probability to the weak-mild solution $\nu$.
	
	\section{Proofs}
	We start by giving the $n$-dependent stochastic equation satisfied by the empirical measure for each $n \in \N$. We then move to the control on the noise term and, finally, the proof of Theorem \ref{thm:main}.
	
	\subsection{A weak-mild formulation satisfied by the empirical measure}
	Recall that $(S_t)_{t\in [0,T]}$ denotes the semigroup generated by $\frac \Delta 2$ on $H^m$.
	
	\medskip
	
	\begin{lemma}
		\label{mild_formulation}
		Assume $m>d/2+2$. The empirical measure \eqref{d:emp} associated to the particle systems \eqref{eq:bm} satisfies for every $h\in H^m$ and $t \in [0,T]$
		\begin{equation}
		\label{d:mild_emp}
		\langle \nu^n_t, h\rangle_{-m,m} =\langle \nu^n_0, S_{t}h\rangle_{-m,m} + \int_0^t\langle\nu^n_s, (\nabla S_{t-s}h) (\Gamma * \nu^n_s)\rangle_{-m,m} \dd s+w^n_t(h), \quad \mathbb{P}\text{-a.s.},
		\end{equation}
		where
		\begin{equation}
		\label{d:noise_term}
		w^n_t (h) = \frac 1n \sum_{j=1}^n \int_0^t \left[ \nabla S_{t-s} h \right] (x^{j,n}_s) \cdot \dd B^j_s.
		\end{equation}
	\end{lemma}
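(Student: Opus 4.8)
The plan is to apply a time-dependent It\^o formula to the real-valued process $s\mapsto (S_{t-s}h)(x^{j,n}_s)$ for each fixed $j\in\{1,\dots,n\}$ and each fixed $t\in[0,T]$, observe a cancellation between the drift coming from the time-dependence of the semigroup and the second-order It\^o term, and finally average over $j$. Before that I would record the regularity facts that make the computation legitimate. Since $m>d/2+2$, the Sobolev embedding \eqref{d:sobIn} gives $H^m\hookrightarrow C^2_b(\R^d)$ and $H^{m-2}\hookrightarrow C_b(\R^d)$. As $S$ is the analytic semigroup generated by $\tfrac\Delta2$ on $H^m$, the curve $s\mapsto S_{t-s}h$ is continuous on $[0,t]$ with values in $H^m$, differentiable on $[0,t)$ with $\partial_s(S_{t-s}h)=-\tfrac12\Delta S_{t-s}h$ in $H^{m-2}$, and this identity extends continuously up to $s=t$ because $\Delta S_{t-s}h$ remains bounded in $H^{m-2}$. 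Composing with the two embeddings, the function $\phi(s,x):=(S_{t-s}h)(x)$ belongs to $C^{1,2}([0,t]\times\R^d)$ with bounded continuous derivatives up to the relevant orders, and $\partial_s\phi(s,x)=-\tfrac12\Delta_x\phi(s,x)$ pointwise.

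Next I would apply It\^o's formula to $\phi(s,x^{j,n}_s)$. Using \eqref{eq:bm}, namely $\dd x^{j,n}_s=\tfrac1n\sum_{l=1}^n\Gamma(x^{j,n}_s,x^{l,n}_s)\,\dd s+\dd B^j_s$, and the fact that the quadratic covariation matrix of $x^{j,n}$ is $\mathrm{Id}\cdot s$, the second-order term equals $\tfrac12\Delta_x\phi(s,x^{j,n}_s)\,\dd s$, which exactly cancels the time-derivative term $\partial_s\phi(s,x^{j,n}_s)\,\dd s=-\tfrac12\Delta_x\phi(s,x^{j,n}_s)\,\dd s$. Integrating from $0$ to $t$ and using $\phi(t,\cdot)=S_0h=h$ and $\phi(0,\cdot)=S_th$, what remains is
\begin{equation*}
h(x^{j,n}_t)-(S_th)(x^{j,n}_0)=\int_0^t(\nabla S_{t-s}h)(x^{j,n}_s)\cdot\frac1n\sum_{l=1}^n\Gamma(x^{j,n}_s,x^{l,n}_s)\,\dd s+\int_0^t(\nabla S_{t-s}h)(x^{j,n}_s)\cdot\dd B^j_s,
\end{equation*}
where the stochastic integral is well defined (indeed a martingale) since $\nabla S_{t-s}h$ is bounded on $[0,t]\times\R^d$.

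Finally I would average the last display over $j=1,\dots,n$. The left-hand side becomes $\langle\nu^n_t,h\rangle-\langle\nu^n_0,S_th\rangle$; since $\tfrac1n\sum_{l=1}^n\Gamma(x,x^{l,n}_s)=(\Gamma*\nu^n_s)(x)$, the first term on the right becomes $\int_0^t\langle\nu^n_s,(\nabla S_{t-s}h)(\Gamma*\nu^n_s)\rangle\,\dd s$ after exchanging the finite sum over $l$ with the $\dd s$-integral, and the second term is precisely $w^n_t(h)$ as in \eqref{d:noise_term}. It then remains to identify the pairing $\langle\cdot,\cdot\rangle$ of probability measures against $C_b$-functions with the duality pairing $\langle\cdot,\cdot\rangle_{-m,m}$, which is legitimate here because $\cP(\R^d)\subset H^{-m}$ and both $S_{t-s}h$ and $(\nabla S_{t-s}h)(\Gamma*\nu^n_s)$ lie in $H^m$; for the latter, and for the integrability in $s$ of its $H^m$-norm, one invokes the estimate \eqref{critical_bound} together with $\Gamma(\cdot_x,\cdot_y)\in H^m_yW^{m,\infty}_x$. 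This gives \eqref{d:mild_emp}, $\mathbb P$-a.s.

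I expect the only genuinely delicate point to be the justification of the time-dependent It\^o formula: one must ensure that the backward identity $\partial_s(S_{t-s}h)=-\tfrac12\Delta S_{t-s}h$ holds in a space embedding into $C_b(\R^d)$, so that $\phi\in C^{1,2}$ in the classical sense and the two $\tfrac12\Delta$ contributions cancel pointwise along the trajectories up to and including $s=t$ — this is exactly what forces $m>d/2+2$. The remaining ingredients (boundedness of $\Gamma$ and of $\nabla S_{t-s}h$, the Fubini-type exchange, and the continuity in $s$ of $\langle\nu^n_s,(\nabla S_{t-s}h)(\Gamma*\nu^n_s)\rangle$ needed for the integral to make sense) are routine.
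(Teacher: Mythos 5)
Your proposal is correct and takes essentially the same route as the paper: apply the time-dependent It\^o formula to $\varphi(s,x)=(S_{t-s}h)(x)$, observe the cancellation between $\partial_s\varphi=-\tfrac12\Delta\varphi$ and the second-order It\^o term $\tfrac12\Delta\varphi$, then average over particles. The paper's proof is much terser (it simply states the result of It\^o's formula and defers the well-posedness of $w^n$ to the next subsection), whereas you spell out the regularity facts needed to make the $C^{1,2}$ It\^o formula legitimate, including the extension of $\partial_s(S_{t-s}h)=-\tfrac12\Delta S_{t-s}h$ up to $s=t$ in $H^{m-2}\hookrightarrow C_b$, and the identification of the probability-measure pairing with the $\langle\cdot,\cdot\rangle_{-m,m}$ duality; these are precisely the details the paper implicitly uses, so the two arguments are the same in substance.
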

	
	\begin{proof}
		Fix $t \in [0,T]$ and $h \in H^m$, by \eqref{d:sobIn} $h$ is $C^2(\R^d)$. For $s<t$, applying Itô's formula onto the test function $\varphi(x,s)=(S_{t-s}h)(x)$, we obtain
		\begin{equation*}
		\begin{split}
		h(x^{i,n}_t)=  &(S_{t}h)(x_0^{i,n})+\frac{1}{n}\sum_{j=1}^n\int_0^t (\nabla S_{t-s}h)(x^{i,n}_s) \Gamma(x^{i,n}_s, x^{j,n}_s) \dd s \\
		& +\int_0^t (\nabla S_{t-s}h)(x^{j,n}_s)\cdot \dd B^{j}_s.
		\end{split}
		\end{equation*}
		%		Observe that the above relation also makes sense for $h \in H^m$ since $C^2_0$ is dense in $H^m$ and $\nabla S_{t-s} h$ is in $H^m$ thanks to the regularizing properties of the semigroup.
		%		\textcolor{orange}{I don't understand why we need $C^2_0$ to use Itôs forula. In my opinion, $C^{1,2}([0,T]\times \R^d)$ is enough, so because of Sobolev embedding $h\in H^m$ is enough (note that we use the classical "real value Itô formula", not an infinite dimensional Itô formula as for example in Liu/Roeckner, p.180). Also, we know that $\varphi\in C^{1,2}([0,T]\times \R^d)$, but for this we don't exploit any regularization of the semi group, because at time $s=t$, we don't have any regularization, i.e. $\nabla h=\nabla S_{t-t}h\notin H^m$. We just use that the semigroup preserves regularity. (note that the time derivative is just $\partial_s S_{t-s}h=-S_{t-s}\Delta/2 h$, which is bounded because $h\in H^m=D(\Delta/2)$ in our setting)}
		Summing over all particles and dividing by $1/n$, the claim is proved modulo well-posedness of the noise term $w^n$ which is presented in the following subsection.
	\end{proof}
	
	\subsection{Controlling the noise term}
	\label{ss:noise}
	The aim of this subsection is to control the noise term $w^n$ appearing in the weak-mild formulation \eqref{d:mild_emp} for the empirical measure. We start by giving a pathwise definition of the integral \eqref{d:noise_term}, i.e. for any $\omega\in A\subset \Omega$ where $\mathbb{P}(A)=1$ and any $h \in H^m$ we define
	\begin{equation*}
	w^n_t (h) (\omega) = \left(\frac 1n \sum_{j=1}^n \int_0^t \left[ \nabla S_{t-s} h \right] (x^{j,n}_s) \cdot \dd B^j_s\right) (\omega),
	\end{equation*}
	which in turn allows to define $w^n$ as an element of $L^{\infty}([0,T], H^{-m})$, via an inequality of the form
	\[
	\sup_{\norm{h}_m=1} |w^n_t(h)(\omega)|\leq C_T(\omega) 
	\]
	for $\omega\in A$, 	see Lemma \ref{lem:wbound}. For this purpose, we extend Gubinelli's theory for rough integration (see \cite{gubi} and \cite[\S 3 and 4 ]{gubinelli2010}) to our setting, see Appendix \ref{A:integration} for notations and precise results on this extension.
	
	A probabilistic estimate is then given, exploiting the independence of the Brownian motions; Lemma \ref{lem:wpoint} shows that
	\begin{equation*}
	\bbE \left[ \sup_{t\in[0,T]} \left|w^n_t(h) \right|^2 \right] \leq \frac{C}{n} \norm{h}^2_m, \quad h \in H^m.
	\end{equation*}
	This estimate will allow us to prove the convergence of \eqref{d:mild_emp} to \eqref{d:weakMildSol} for every fixed $h \in H^m$, see Lemma \ref{lem:limit}.
	\medskip	
	
	\subsubsection{Pathwise definition via rough paths theory for  semigroup functionals}
	We start by observing that the noise term $w^n_t(h)$ in \eqref{d:noise_term} is neither a stochastic convolution that could be treated using a maximal inequality in Hilbert spaces (e.g. \cite[\S 6.4]{cf:DpZ} and \cite{cf:bechtold} in the context of an unbounded diffusion operator), nor a classical controlled rough path integral (e.g. \cite{frizhairer}) as the integrand depends on the upper integration limit.
	
	We combine the strategies in \cite{gubinelli2010, gubi} so to define $w^n_t(h)$ in a pathwise sense. Note that our setting is  different from \cite{gubinelli2010}, where an infinite dimensional theory à la Da Prato-Zabczyk is constructed, while we are interested in finite dimensional stochastic integrals over functionals of such objects. 
	Our construction is nonetheless similar to \cite{gubinelli2010}: we fix the Itô-rough path lift associated to Brownian motion and extend the algebraic integration in \cite{gubinelli2010} to our setting of semigroup functionals. This extension is presented in detail in Appendix B, where the main ingredient, the Sewing lemma, is proven. Before stating Lemma \ref{lem:wbound}, we present in a heuristic fashion the main ideas towards a rough path construction of \eqref{d:noise_term}. 
	
	\medskip
	
	Note that it suffices to define integrals of the form
	\begin{equation}
	\label{d:pathwiseIntegral}
	\int_s^t \nabla S_{t-u} f(x_u) \cdot \dd B_u
	\end{equation}
	in a pathwise sense for a class of sufficiently regular functions $f$ and where $(x_u)_u$ is an $\R^d$-valued process controlled by the Brownian motion $(B_u)_u$, such that
	\begin{equation}
	\label{d:controlled}
	x_t - x_s = B_t - B_s + O(\left|t-s\right|), \quad \text{for } s,t \in [0,T], \; \bbP\text{-a.s.}.	
	\end{equation}
	Recall that in the classical setting of rough paths theory, one has for $s\leq t$
	\[
	\int_s^t f(x_u) \dd B_u=f(x_s)B_{ts}+ (D_xf)(x_u)\mathbb{B}_{ts}+R_{ts}
	\]
	where we have used the notation $B_{ts} := B_t-B_s$ as well as
		\[
		\mathbb{B}_{ts}:=\int_s^t B_{us} \otimes \dd B_u, \quad t\geq s\in [0,T].
		\]
	In particular, $A_{ts}:=f(x_s)B_{ts}+(D_xf)(x_u)\mathbb{B}_{ts}$ is a germ and, thanks to \eqref{d:controlled}, $R_{ts}=o(|t-s|)$ is a remainder in the terminology of \cite{gubi}. In the same spirit of \cite{gubi}, we rewrite the left hand side of this expression as
	\[
	\int_s^t f(x_u) \dd B_u=[\delta I]_{ts}=I_t-I_s
	\]
	where
	\[
	I_t=\int_0^t f(x_u) \dd B_u.
	\]
	We are thus left with
	\begin{equation}
	    [\delta I]_{ts}=A_{ts}+R_{ts}.
	    \label{d:gubi_classic}
	\end{equation}
	Recall that Gubinelli's Sewing Lemma formulates precise conditions under which a given germ $A$ gives rise to a unique remainder term $R_{ts}=o(|t-s|)$ and such that $I$ can be obtained as
	\[
	I_t:=\lim_{|\cP[0,t]|\downarrow 0} \; \sum_{[u,v]\in \cP[0,t]} A_{vu}.
	\]
If one tries to follow a similar approach for the quantity of interest \eqref{d:pathwiseIntegral}, a canonical candidate for local approximations to \eqref{d:pathwiseIntegral} would be
\[
\int_s^t(\nabla S_{t-u}f)(x_u)\dd B_u=(\nabla S_{t-s}f)(x_s)B_{ts}+(D \nabla S_{t-s}f)(x_s)\mathbb{B}_{ts}+R_{ts}.
\]
However, notice that if we were to set
\[
I_t(f):=\int_0^t (\nabla S_{t-u}f)(x_u)\dd B_u
\]
then, we would obtain 
\begin{equation*}
\begin{split}
    [\delta I(f)]_{ts}=I_t(f)-I_s(f)&=\int_0^t(\nabla S_{t-u}f)(x_u)\dd B_u+\int_0^s (\nabla S_{s-u}(S_{t-s}-\mbox{Id})f)(x_u)\dd B_u\\
    &=\int_s^t(\nabla S_{t-u}f)(x_u)\dd B_u+I_s((S_{t-s}-\mbox{Id})f)\\
    &\neq \int_s^t(\nabla S_{t-u}f)(x_u)\dd B_u,
\end{split}
\end{equation*}
in contrast to the above setting, meaning the standard approach of \cite{gubi} fails. If one defines, following Gubinelli and Tindel \cite[p.16]{gubinelli2010}, the operator $\phi$ via
\[
[\phi I(f)]_{ts}=I_s((S_{t-s}-\mbox{Id})f)
\]
as well as the operator $\hat{\delta}$ via 
\[
[\hat{\delta} I(f)]_{ts}=[\delta I(f)]_{ts}-[\phi I(f)]_{ts},
\]
the desired relationship is recovered, indeed
\begin{equation*}
    \begin{split}
        [\hat{\delta} I(f)]_{ts}&=\int_s^t(\nabla S_{t-u}f)(x_u)\dd B_u\\
        &=(\nabla S_{t-s}f)(x_s)B_{ts}+(D \nabla S_{t-s}f)(x_s)\mathbb{B}_{ts}+R_{ts}.
    \end{split}
\end{equation*}
The idea is hence to change the cochain complex in \cite{gubi} and to consider a perturbed version of it associated to the operator $\hat{\delta}$, this is done in Lemma \ref{lem:cochain}. Lemma \ref{lem:sewing} proves a Sewing Lemma in this modified setting, which in turn allows to construct the above remainder $R_{ts}$. The germ will therefore be
\[
[Af]_{ts}=(\nabla S_{t-s}f)(x_s)B_{ts}+(D \nabla S_{t-s}f)(x_s)\mathbb{B}_{ts}.
\]
For $0=t_0<\dots <t_{n+1}=t$, note that due to 
\begin{equation*}
    \begin{split}
        I_t(f)&=\int_0^t(\nabla S_{t-u}f)(x_u)\dd B_u\\
&=\sum_{k=0}^n\int_{t_k}^{t_{k+1}}(\nabla S_{t-u}f)(x_u)\dd B_u\\
&=\sum_{k=0}^n\int_{t_k}^{t_{k+1}}(\nabla S_{t_{k+1}-u}(S_{t-t_{k+1}}f))(x_u) \dd B_u\\
&=\sum_{k=0}^n [A(S_{t-t_{k+1}}f)]_{t_{k+1}t_k}+\sum_{k=0}^n R_{t_{k+1}t_k},
    \end{split}
\end{equation*}
the correct way of sewing together the germs is given by
\[
I_t(f)=\lim_{n\rightarrow \infty}\sum_{k=0}^n [A(S_{t-t_{k+1}}f)]_{t_{k+1}t_k},
\]
which is reflected in equation \eqref{sew:int} in Corollary \ref{int_well_defined}. In particular, note that this Corollary comes with the stability estimate \eqref{sew:estI} which allows to eventually deduce the first crucial estimate \eqref{eq:wbound} on the noise term, as shown in the next Lemma.

\medskip

	\begin{comment}
	
	We introduce an operator $A$ which acts on $H^m$ and depends on the Itô rough path lift $(B, \mathbb{B})$. Following \cite{gubinelli2010}, we manage to define \eqref{d:pathwiseIntegral} as
	\begin{equation*}
	\int_s^t \nabla S_{t-u} f(x_u) \dd B_u = \lim_{|\cP[s,t]|\downarrow 0} \; \sum_{[u,v]\in \cP[s,t]} \left[A S_{t\cdot}f \right]_{uv},
	\end{equation*}
	where the limit is over any partition of $[s,t]$ whose mesh size tends to zero. Observe that this pathwise construction coincides with the Itô integral $\mathbb{P}$-a.s., e.g. \cite[Proposition 5.1]{frizhairer}.
	
	\medskip
	
	The well-posedness of $w^n_t(h)$ is then a consequence of this construction as shown in the next lemma. Since rough integration requires sufficiently regular integrands, we have to require $m > d/2 + 3$; thanks to \eqref{d:sobIn}, each $h \in H^m$ is now a $C^3_b$ function.
	\end{comment}
	
	\medskip
	
	\begin{lemma}
		\label{lem:wbound}
		Suppose $m>d/2+3$. For every $\alpha\in (1/3, 1/2)$, there exists a positive random constant $C=C_\alpha$ that is finite $\mathbb{P}$-a.s.(and of finite moments for all orders) such that $\mathbb{P}$-a.s.
		\begin{equation}
		\label{eq:wbound}
		|w^n_t(h)|\leq C_{\alpha}(1+t)^{3 \alpha} \norm{h}_m
		\end{equation}
		for any $t \geq 0$ and $h\in H^m$.
	\end{lemma}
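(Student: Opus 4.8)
The plan is to deduce \eqref{eq:wbound} from the rough-path construction of the integral $I_t(f)$ via the Sewing Lemma in the modified cochain complex (Corollary \ref{int_well_defined}), applied pointwise in $h$, and then to turn the pointwise bound into the uniform-in-$h$ bound by using the homogeneity of the construction in $h$ together with the Sobolev embedding $H^m \hookrightarrow C^3_b$ (which holds since $m > d/2+3$). More precisely, for fixed $f = h \in H^m$, the germ is $[Ah]_{ts} = (\nabla S_{t-s}h)(x_s)B_{ts} + (D\nabla S_{t-s}h)(x_s)\mathbb{B}_{ts}$, and the stability estimate \eqref{sew:estI} from Corollary \ref{int_well_defined} should give something of the shape $|I_t(h)| \leq C_\alpha\,(\text{time factor})\cdot\big(\|h\|_{C^1_b}\|B\|_{\alpha} + \|h\|_{C^2_b}\|\mathbb{B}\|_{2\alpha} + \text{(higher regularity of $h$)}\cdot(\text{norms of }(B,\mathbb{B}))\big)$, where the rough-path norms $\|B\|_\alpha$, $\|\mathbb{B}\|_{2\alpha}$ are $\mathbb{P}$-a.s. finite (and have finite moments of all orders, by Kolmogorov's criterion for Brownian motion and its iterated integral). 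Since $w^n_t(h) = \frac1n\sum_{j=1}^n I_t^{(j)}(h)$ is an average over $n$ such integrals — one per particle, each built from the same $h$ but from the path $(x^{j,n}_u)_u$ and the Brownian motion $B^j$ — and since the construction of each $I^{(j)}$ only uses \eqref{d:controlled}, which holds for each $x^{j,n}$ with $O(|t-s|)$ constant controlled by $\|\Gamma\|_\infty$, the bound on $w^n_t(h)$ follows by taking $C_\alpha$ to be (a constant times) the maximum over $j$ of the per-particle constants; crucially this is still $\mathbb{P}$-a.s. finite with finite moments of all orders, but one must be slightly careful that the resulting constant does \emph{not} depend on $n$ — this is exactly the point flagged in the discussion that the rough-path bound cannot exploit independence.

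The steps, in order, would be: (i) Invoke Corollary \ref{int_well_defined} to get, for each $h\in H^m$ and each particle trajectory $x^{j,n}$, a well-defined $I^{(j)}_t(h) = \int_0^t (\nabla S_{t-u}h)(x^{j,n}_u)\cdot \dd B^j_u$ together with the stability estimate \eqref{sew:estI}. (ii) Read off from \eqref{sew:estI} the explicit dependence of the bound on $t$ (this is where the $(1+t)^{3\alpha}$ comes from: the germ carries factors $B_{ts} = O(|t-s|^\alpha)$ and $\mathbb{B}_{ts} = O(|t-s|^{2\alpha})$, the semigroup smoothing $\|\nabla S_{t-s}\|$ contributes a $(t-s)^{-1/2}$ which, combined with $\alpha < 1/2$, is integrable, and summing germ increments over a partition of $[0,t]$ with the $o(|t-s|)$-remainder control produces a power of $t$; the exponent $3\alpha$ should emerge from the worst term, the $\mathbb{B}$-term with $2\alpha$ plus one more $\alpha$ from the remainder/regularity budget). (iii) Track the $h$-dependence: every occurrence of $h$ in the germ and its associated remainder is through $\nabla S_{t-s}h$ and $D\nabla S_{t-s}h$, and using $\|\nabla S_{t-s}h\|_{C^1_b} \lesssim \|S_{t-s}h\|_{C^2_b} \lesssim \|h\|_{C^2_b} \lesssim \|h\|_m$ (and similarly up to $C^3_b$), and, where needed, the additional smoothing estimate $\|\nabla S_{t-s}h\|_{C^k_b} \lesssim (t-s)^{-1/2}\|h\|_{C^{k-1}_b}$ from Appendix A, conclude that the whole estimate is bounded by $C_\alpha(\omega)\,(1+t)^{3\alpha}\|h\|_m$ with $C_\alpha$ independent of both $h$ and $n$. (iv) Take the supremum over $\|h\|_m = 1$ and over $j = 1,\dots,n$, noting the maximum over finitely many $j$ of a.s.-finite, finite-moment random variables is again a.s. finite with finite moments of all orders, and that the per-particle constants are identically distributed so the moment bounds are uniform in $n$.

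The main obstacle I anticipate is Step (ii)–(iii): extracting from the modified Sewing Lemma a \emph{clean} estimate in which the power of $t$, the negative power of $t-s$ from the semigroup, and the rough-path regularity $\alpha$ balance out to give exactly $(1+t)^{3\alpha}$ and no worse. This requires the stability estimate \eqref{sew:estI} to be stated with the right weights — in particular a weight accommodating the $(t-s)^{-1/2}$ singularity of $\nabla S_{t-s}$ — so the bulk of the work is really in Appendix B, and here one is simply carefully invoking it with $f = S_{t-t_{k+1}}h$ and using $\|S_{t-t_{k+1}}h\|_m \leq \|h\|_m$ (contractivity of $S$ on $H^m$, Appendix A) to avoid any loss when the germ is evaluated along the partition. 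A secondary, more bookkeeping-level point is to make sure the a.s. event $A$ on which everything is defined can be chosen uniformly in $n$ (it should be: it is the intersection over $j\in\N$ of the full-measure events on which the lifts $(B^j,\mathbb{B}^j)$ are $\alpha$-Hölder, a countable intersection of full-measure sets), and that the "finite moments of all orders" claim genuinely transfers from the Gaussian moment bounds on $\|B^j\|_\alpha$ and $\|\mathbb{B}^j\|_{2\alpha}$ through the polynomial estimate \eqref{sew:estI}.
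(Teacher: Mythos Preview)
Your outline is essentially the paper's approach: define the germ $[Ah]_{ts}=(\nabla S_{t-s}h)(x_s)B_{ts}+(D\nabla S_{t-s}h)(x_s)\mathbb{B}_{ts}$, verify the hypotheses of Corollary~\ref{int_well_defined}, read off \eqref{sew:estI}, and average over particles. Two corrections, though, to where you locate the work and to one piece of intuition.

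First, the bulk of the proof is \emph{not} hidden in Appendix~B but lies in checking, for this specific germ, that $\hat{\delta}A\in D^{3\alpha}_3$. The paper computes $[\hat{\delta}Af]_{tus}$ explicitly using Chen's relation and decomposes it into four pieces: two involving $\nabla(S_{ts}-S_{tu})f$ and $D\nabla(S_{ts}-S_{tu})f$ (controlled by Lemma~\ref{lem:semigroup_bounds}, giving $|u-s|$ and $|u-s|^{1/2}$ respectively), one Lipschitz increment of $D\nabla S_{ts}f$ (controlled by the $C^3_b$-regularity of $f$ and \eqref{d:controlled}), and one second-order Taylor remainder of $\nabla S_{ts}f$ around $x_s$ (again using $C^3_b$ and \eqref{d:controlled}). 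The worst of these carries the exponent $3\alpha$, which is where the $(1+t)^{3\alpha}$ in \eqref{sew:estI} comes from. You do not get this from the Sewing Lemma for free; you have to do the computation.

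Second, there is \emph{no} $(t-s)^{-1/2}$ singularity in this proof, and introducing one would break the argument: the germ must satisfy $|[Ah]_{ts}|\lesssim |t-s|^\alpha$, which fails if $\nabla S_{t-s}h$ blows up as $t\downarrow s$. The whole point of assuming $m>d/2+3$ (rather than merely $m>d/2$) is that $h\in C^3_b$, so $\|\nabla S_{t-s}h\|_\infty$, $\|D\nabla S_{t-s}h\|_\infty$, and the Lipschitz constant of $D\nabla S_{t-s}h$ are all bounded by $C\|h\|_m$ \emph{uniformly} in $t-s$. The semigroup is used here for its \emph{smoothing in time} (Lemma~\ref{lem:semigroup_bounds}: $\|\nabla(S_t-S_s)f\|_\infty\lesssim |t-s|\,\|f\|_m$), not for trading spatial regularity against a time singularity.
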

	
	\begin{proof}
		We follow the notations of Appendix \ref{A:integration}. Fix $\alpha\in (1/3, 1/2)$ and recall that $(B, \bbB)$ is the It\^o rough path lift, with
		\[
		\mathbb{B}_{ts}:=\int_s^t B_{us} \otimes \dd B_u, \quad s\leq t \in [0,T],
		\]
		where $B_{us} := B_u - B_s$. Note that the above stochastic integral is understood in the Itô sense.
		
		We use Lemma \ref{int_well_defined} to define the Itô integral \eqref{d:pathwiseIntegral}. This in turn will imply the well-posedness of $w^n_t (h)$ with the choice $f=h$, $x = x^{i,n}$ and $B=B^i$ for $i=1,\dots,n$. Indeed, $x^{i,n}$ is controlled by $B^i$ (recall \eqref{eq:bm} and the fact that $\Gamma$ is bounded), and thus
		\begin{equation*}
		| w^n_t(h)| \leq \frac{1}{n}\sum_{i=1}^n\left|\int_0^t (\nabla S_{t-s}h)(x^{i,n}_s)\dd B^i_s\right| \leq C_\alpha (1+t)^{3\alpha} \norm{h}_m.
		\end{equation*}
		
		\medskip
		
		Define the operator $A$ acting on $f \in H^m$ into $C(\Delta_2, \R)$ via
		\begin{equation*}
		[Af]_{ts}:=(\nabla S_{t-s}f)(x_s) \cdot B_{ts}+(D_x\nabla S_{t-s}f)(x_s) \cdot \mathbb{B}_{ts},
		\end{equation*}
		where $D_x$ denotes the Jacobian in $\R^d$ and $\cdot$ the scalar product between tensors of the same dimension. In the sequel, we adopt the following shorter notation
		\begin{equation*}
		[Af]_{ts}:=\nabla S_{ts} f_s \, B_{ts} + D_x\nabla S_{ts} f_s \, \bbB_{ts}.
		\end{equation*}
		As in classical rough paths theory $[Af]_{ts}$ is not a 1-increment (i.e. a difference as $B_{ts}$) but a continuous function of the two variables $s$ and $t$. In particular $A \in D_2$, i.e. $A$ is	a linear operator from the Banach space $H^m$ to $C_2$.
		
		One can actually prove that $A \in D^\alpha_2$: for $0 \leq s\leq t\leq T$ and $f \in H^m$
		\begin{equation*}
		|[Af]_{ts}|\leq \norm{\nabla S_{ts}f}_\infty |B_{ts}|+\norm{D_x\nabla S_{ts}f}_\infty |\bbB_{ts}| \leq C_\alpha \norm{f}_m \left|t-s\right|^\alpha,
		\end{equation*}
		where $C_\alpha=C_\alpha(\omega)$ depends on the $\alpha$-H\"older norm of $B(\omega)$ and $\mathbb{B}(\omega)$ and we have used the properties of $S$, see Lemma \ref{lem:semigroup_bounds}. Note in particular that $C_\alpha<\infty$, $\mathbb{P}$-a.s. and that  $C_\alpha$ has finite moments of all orders.
		\medskip
		
		Recall the definition of $\hat{\delta}$ (Lemma \ref{lem:cochain}), in order to apply Lemma \ref{lem:sewing} and Corollary \ref{int_well_defined} we need to show that $\hat{\delta}A\in D^{1+}_3$. Let $f \in H^m$ and $s < u < t$, one has
		\begin{equation*}
		\begin{split}
		[\hat{\delta}Af]_{tus} &= [\delta A f]_{tus} - [\phi Af]_{tus} = [Af]_{ts} - [Af]_{tu} - [Af]_{us} - [A(S_{t\cdot}-\text{Id})f]_{us}=\\
		&= [Af]_{ts} - [Af]_{tu} - [AS_{t\cdot}f]_{us}.
		\end{split}
		\end{equation*}
		Observe that thanks to the properties of the semigroup 
		\begin{equation*}
		[AS_{t\cdot}f]_{us} = \nabla S_{us}S_{tu} f_s \, B_{us} + D_x\nabla S_{us} S_{tu} f_s \, \bbB_{us} = \nabla S_{ts} f_s \, B_{us} + D_x\nabla S_{ts} f_s \, \bbB_{us}.
		\end{equation*}
		In particular, using Chen's relation
		\begin{equation*}
		\begin{split}
		\mathbb{B}_{ts}=\mathbb{B}_{us}+\mathbb{B}_{tu}+B_{tu}\otimes B_{us}
		\end{split}
		\end{equation*} 
		we obtain
		\begin{equation*}
		\begin{split}
		[\hat{\delta}Af]_{tus} =  \nabla S_{ts} f_s \, B_{tu} - \nabla S_{tu} f_u \, B_{tu} + D_x \nabla S_{ts} f_s \, (\bbB_{ts}-\bbB_{us}) - D_x \nabla S_{tu} f_u \, \bbB_{tu}=\\
		= (\nabla S_{ts} f_s  - \nabla S_{tu} f_u) \, B_{tu} + D_x (\nabla S_{ts} f_s  - \nabla S_{tu} f_u) \, \bbB_{tu} + D_x S_{ts} f_s \, B_{tu} \otimes B_{us}.
		\end{split}
		\end{equation*}
		We rewrite everything as the sum of four terms
		\begin{equation*}
		\begin{split}
		[\hat{\delta}Af]_{tus}  = & \; \nabla (S_{ts} - S_{tu}) f_u \, B_{tu} + D_x \nabla (S_{ts} - S_{tu}) f_u \, \bbB_{tu} + \\
		+ & ( D_x \nabla S_{ts} f_s - D_x \nabla S_{ts} f_u) \, \bbB_{tu} +( \nabla S_{ts} f_s - \nabla S_{ts} f_u + D_x \nabla S_{ts} f_s B_{us}) \, B_{tu} \\
		=& :A^1 + A^2 + A^3 + A^4.
		\end{split}
		\end{equation*}
		
		For $A^1$ we obtain
		\begin{equation*}
		\left|\nabla (S_{ts} - S_{tu}) f_u \, B_{tu}\right| \leq \norm{\nabla (S_{ts} - S_{tu}) f_u}_\infty \left|B_{tu}\right| \leq C_\alpha \norm{f}_{m} \left|t-u\right|^\alpha\left|u-s\right|,
		\end{equation*}
		where $C_\alpha=C_\alpha(\omega)$ depends on the $\alpha$-H\"older norm of $B(\omega)$ and we have used the properties of $S$, see Lemma \ref{lem:semigroup_bounds}. Note in particular that $C_\alpha<\infty$, $\mathbb{P}$-a.s. and that  $C_\alpha$ has finite moments of all orders.
		Similarly, for $A^2$ (with a different $C_\alpha$)
		\begin{equation*}
		\left|D_x \nabla (S_{ts} - S_{tu}) f_u \, \bbB_{tu}\right| \leq C_\alpha \norm{f}_{m} \left|t-u\right|^{2\alpha}\left|u-s\right|^{1/2}.
		\end{equation*}
		Observe now that, since $f\in C^3_b$,  the function $D_x \nabla S_{ts} f$ is Lipschitz uniformly in $s$ and $t$, from which we extract that
		\begin{equation*}
		\begin{split}
			\left|( D_x \nabla S_{ts} f_s - D_x \nabla S_{ts} f_u) \, \bbB_{tu}\right| \leq C_\alpha \norm{f}_m \left| x_s - x_u \right| \left|t-u\right|^{2\alpha}\\
			\leq C_\alpha \norm{f}_m \left|t-u\right|^{2\alpha} \left| u-s \right|^\alpha.
		\end{split}
		\end{equation*}
		Using \eqref{d:controlled}, we recognize in $A^4$ the Taylor expansion of $\nabla S_{ts} f$ around $x_s$, i.e.
		\begin{equation*}
		\begin{split}
		& \left| \nabla S_{ts} f_u -  \nabla S_{ts} f_u - D_x \nabla S_{ts} f_s \, B_{us}\right|  \leq\\ 
		&\leq \left| \nabla S_{ts} f_u -  \nabla S_{ts} f_u - D_x \nabla S_{ts} f_s \, x_{us}\right|  + c \left|D_x \nabla S_{tu} f_s \right| \left|u-s \right| \leq \\
		&\leq c\norm{f}_m \left|x_{us}\right|^2 + c \norm{f}_m \left|u-s\right| \leq c\norm{f}_m \left|u-s\right|^{2\alpha}.
		\end{split}
		\end{equation*}
		We conclude that
		\begin{equation*}
		\left|A^4\right| \leq C_\alpha \norm{f}_m \left|t-u\right|^\alpha \left|u-s\right|^{2\alpha}.
		\end{equation*}
		
		Putting the four estimates together, we have just shown $\hat{\delta}A\in D^{1+}_3$ and, in particular, that
		\[
		\norm{\hat{\delta}A}_{D^{3\alpha}_3}\leq C_\alpha
		\]
		for some  $C_\alpha$ which is finite $\mathbb{P}$-a.s. and admits moments of all orders. By Corollary \ref{int_well_defined}, we know that there exists $I\in D_1$ such that
		\[
		[\hat{\delta} If]_{ts} = \lim_{|\mathcal{P}^n[s,t]|\rightarrow 0}\sum_{[u,v]\in \mathcal{P}^n[s,t]} [A S_{t\cdot} f]_{vu}
		\]
		is well defined. For $0\leq s\leq t \leq T$, se set
		\begin{equation*}
		\int_s^t \nabla S_{t-u}f(x_u) \cdot \dd B_u := [\hat{\delta} If]_{ts}.
		\end{equation*}
		Again Corollary \ref{int_well_defined} assures that there exists a (new) constant $C_\alpha$, depending on the norm of $A$ in $D_2^\alpha$ and the norm of $\hat{\delta} A$ in $D^{3\alpha}_3$, such that
		\[
		\left|\int_0^t\nabla S_{t-u} f(x_u)\cdot \dd B_u\right| \leq C_\alpha \norm{f}_m (1+t)^{3\alpha}.
		\]
		The proof is concluded.
	\end{proof}
	
	\medskip
	
	\subsubsection{Controlling $w^n_t (h)$ via a maximal inequality for self-normalized processes}
	
	The aim of this subsection is to give a probabilistic bound on
	\begin{equation*}
	w^n_t (h) = \frac 1n \sum_{j=1}^n \int_0^t \left[ \nabla S_{t-s} h \right] (x^{j,n}_s) \dd B^j_s
	\end{equation*}
	by exploiting the independence of the Brownian motions (we have removed the product symbol $\cdot$ for the sake of notation). 
	
	\medskip
	
	Observe that if $w^n_t(h)$ didn't involve a convolution with the semigroup $S$, $w^n_t(h)$ would be a standard martingale and classical estimates like the Burkholder-Davis-Gundy inequality could be used to establish the desired bound. While the convolution with the semigroup $S$ destroys the martingale property, $w^n_t(h)$ is still closely related to maximal inequalities for self-normalized martingales for which the following fine estimate due to Graversen and Peskir \cite{cf:GP00} is available.
	\begin{lemma}[{\cite[Corollary 2.8]{cf:GP00}} and {\cite[Corollary 2.4]{cf:JiaZhao}}]
		\label{graversen_perskir}
		Let $(M_t)_{t\in [0, T]}$ be a continuous local martingale. There exists a universal constant $C$ such that
		\[
		\mathbb{E}\left[ \sup_{t \in [0, \tau]}\frac{|M_t|^2}{1+\langle M\rangle_t}\right]\leq C \,  \mathbb{E}\left[\, \log(1+\log(1+\langle M \rangle_\tau)) \, \right]
		\]
		for every stopping time $\tau\leq T$.
	\end{lemma}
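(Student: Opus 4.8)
The plan is to follow the classical route of Graversen and Peskir \cite{cf:GP00} (see also \cite{cf:JiaZhao}): two successive time changes reduce the self-normalized maximal inequality to a maximal inequality for the Ornstein--Uhlenbeck process, whose proof rests on the stationarity of that process.

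First, by the Dambis--Dubins--Schwarz theorem one writes, on a possibly enlarged probability space, $M_t=\beta_{\langle M\rangle_t}$ for a standard real Brownian motion $\beta$, so that
\[
\sup_{t\in[0,\tau]}\frac{|M_t|^2}{1+\langle M\rangle_t}\leq \sup_{0\leq a\leq \langle M\rangle_\tau}\frac{\beta_a^2}{1+a},
\]
and $S:=\langle M\rangle_\tau$ is a stopping time for the time-changed filtration. It is therefore enough to prove, for a Brownian motion $\beta$ and any stopping time $S$, that $\mathbb{E}[\sup_{0\leq a\leq S}\beta_a^2/(1+a)]\leq C\,\mathbb{E}[\log(1+\log(1+S))]$. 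Next, the substitution $a=e^u-1$ turns $V_u:=e^{-u/2}\beta_{e^u-1}$ into an Ornstein--Uhlenbeck process (it solves $\dd V_u=-\tfrac12 V_u\,\dd u+\dd\widetilde\beta_u$ with $V_0=0$ and stationary variance $1$), and since $V_u^2=\beta_a^2/(1+a)$,
\[
\sup_{0\leq a\leq S}\frac{\beta_a^2}{1+a}=\sup_{0\leq u\leq \log(1+S)}V_u^2,
\]
with $U:=\log(1+S)$ again a stopping time. The claim reduces to the Ornstein--Uhlenbeck maximal inequality $\mathbb{E}[\sup_{0\leq u\leq U}V_u^2]\leq C\,\mathbb{E}[\log(e+U)]$, which is \cite[Corollary 2.8]{cf:GP00}.

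For this last inequality I would exploit the Markov property and stationarity of $V$. On a unit interval $[k,k+1]$ the supremum $\sup_{u\in[k,k+1]}V_u^2$ has, conditionally on $\mathcal{F}_k$, an exponential (Gaussian-type) tail that is uniform in $k$ once the starting height $V_k^2$ is accounted for; a union bound over the first $\lceil U\rceil$ unit intervals together with optional sampling yields a tail estimate roughly of the form $\mathbb{P}(\sup_{u\leq U}V_u^2>\lambda)\leq C\,\mathbb{E}[(1+U)e^{-\lambda/C}\wedge 1]$, and integration in $\lambda$ gives $C\,\mathbb{E}[\log(e+U)]$. To make the conditioning on $\{U\geq k\}$ and on $V_k$ rigorous I would use Peskir's optimal-stopping (``maximality principle'') argument: exhibit an explicit $G(u,v)$ that is superharmonic for the space--time generator of $(u,V_u)$, dominates $v^2$, and satisfies $\mathbb{E}[G(\sigma,V_\sigma)]\leq C\,\mathbb{E}[\log(e+\sigma)]$ along all stopping times $\sigma$; applying It\^o's formula and optional stopping to $G(u,V_u)$ then gives the bound uniformly in the stopping time $U$. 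Undoing the two time changes returns the statement.

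The step I expect to be the main obstacle is obtaining the \emph{iterated} logarithm rather than a single logarithm. Splitting $[0,\tau]$ into geometric-scale blocks on which $1+\langle M\rangle$ is comparable to a constant, and merely summing the $L^1$-norms of the blockwise suprema, only yields $C\,\mathbb{E}[\log(1+\langle M\rangle_\tau)]$; the improvement to $\mathbb{E}[\log(1+\log(1+\langle M\rangle_\tau))]$ genuinely requires that each blockwise supremum have an \emph{exponential}, not polynomial, tail conditionally on the past --- that is, it uses the renewal/mixing structure after each geometric rescaling, which is exactly stationarity in the Ornstein--Uhlenbeck picture. A secondary delicate point is the starting height of each block: a naive union bound leads to the useless self-referential estimate $X\leq 2X+(\cdots)$, so the scales must be chained (or everything absorbed into a single optimal-stopping inequality as above).
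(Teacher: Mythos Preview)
The paper does not supply its own proof of this lemma: it is merely quoted from \cite{cf:GP00} and \cite{cf:JiaZhao}, with the single-sentence observation that it is a special case of general bounds on self-normalized processes $X_t=A_t/B_t$ as in \cite{cf:DlP04}. There is therefore nothing to compare your argument against in the paper itself.

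That said, your sketch is precisely the Graversen--Peskir route from the cited references and is correct in outline: the Dambis--Dubins--Schwarz time change reduces to Brownian motion, the exponential substitution $a=e^u-1$ turns the self-normalized ratio into the square of a stationary Ornstein--Uhlenbeck process, and the problem becomes the OU maximal inequality $\mathbb{E}[\sup_{u\leq U}V_u^2]\leq C\,\mathbb{E}[\log(e+U)]$. Your identification of the key difficulty --- that the iterated logarithm requires the exponential (Gaussian) tail of the OU supremum over unit intervals, not merely an $L^1$ bound --- is exactly right, and your suggestion to package this via a superharmonic function and optional stopping is how \cite{cf:GP00} proceeds. The only part left genuinely incomplete is the explicit construction of that superharmonic function (or equivalently the tail estimate with the correct dependence on the starting point $V_k$), but since the paper simply cites the result, no more is required here.
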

	Observe that this result is a consequence of more general bounds on self-normalized processes of the form $X_t=A_t/B_t$ (e.g. \cite{cf:DlP04}), where in this case $A_t=M_t$ is a martingale and $B^2_t - 1 = \langle M\rangle_t$ its quadratic variation.
	
	Let us illustrate in the following example how this interpretation can be used to directly obtain a bound on 
	\[
	v_t=\frac{1}{n}\sum_{j=1}^n\int_0^te^{-a(t-s)}\dd B^j_s, \quad a>0,
	\]
	which could be seen as a most simple toy model for $w^n_t(h)$.
	\begin{example}
		Let $(B^j)_{j\leq n}$ be independent Brownian motions on a common filtered probability space $(\Omega, \cF, (\cF)_t)_t, \mathbb{P})$. For $a>0$, let $(X^j)_{j\leq n}$ be the following associated familiy of Ornstein Uhlenbeck processes:
		\[
		X^j_t:=\int_0^t e^{-a(t-s)}\dd B^j_s, \quad t \in [0,T]
		\]
		and consider the quantity
		\[
		v_t:=\frac{1}{n}\sum_{j=1}^nX^j_t.
		\]
		We remark that we may rewrite
		\[
		\sum_{j=1}^nX^j_t=\sqrt{\frac{n}{2a}}e^{-at}\left(\sum_{j=1}^n\sqrt{\frac{2a}{n}}\int_0^t e^{as}\dd B^j_s\right)=:\sqrt{\frac{n}{2a}}e^{-at}M_t.
		\]
		Notice that $M$ is a martingale of quadratic variation
		\[
		\langle M\rangle_t=(e^{2at}-1)
		\]
		and therefore, by Lemma \ref{graversen_perskir}, we conclude that 
		\begin{equation*}
		\begin{split}
		\mathbb{E} \left[\sup_{t\in [0,T]} |v_t|^2 \right] &=\frac 1{2na}\mathbb{E}\left[ \sup_{t\in [0, T]}|e^{-at}M_t|^2 \right]\\
		&=\frac 1{2na} \mathbb{E}\left[ \sup_{t\in [0,T]}\frac{|M_t|^2}{1+\langle M\rangle_t }\right]\\
		&\leq C \frac 1{2na} \log{(1+2aT)}.
		\end{split}
		\end{equation*}
		\label{example_v}
	\end{example}
	
	Note that we crucially exploited the splitting $e^{-a(t-s)}=e^{-at}e^{as}$, which is not available
	in the semigroup setting we are concerned with. Intending to employ such a step suggests to pass by a functional calculus for the semigroup, which we briefly discuss next. 
	
	\medskip
	
	Recall that an analytic semigroup is a bounded linear operator that can be expressed by means of a Dunford integral (e.g. \cite{cf:Hen,cf:Lun} and Appendix \ref{A:semigroup}). The integral representation of $S$ is given for every $t\in[0,T]$ by
	\begin{equation}
	\label{d:dunInt}
	S_t = \frac 1{2 \pi i} \int_{\gamma_{r,\eta}} e^{t\lambda} R(\lambda, \tfrac \Delta 2) \dd \lambda,
	\end{equation}
	where $R(\lambda, \tfrac \Delta 2)=(\lambda \text{Id} - \tfrac \Delta 2)^{-1}$ denotes the resolvent of $\tfrac \Delta 2$ and where, for $r>0$ and $\eta \in (\pi/2, \pi)$, $\gamma_{r,\eta}$ is the curve $\{\lambda \in \bbC \, : \, \left|\arg\lambda\right| = \eta, \left|\lambda\right|\geq r\}\cup \{ \lambda \in \bbC \, : \, \left|\arg \lambda\right|\leq \eta, \left|\lambda\right|=r \}$, oriented counterclockwise.
	
	\medskip
	
	Plugging \eqref{d:dunInt} into the expression of $w^n_t(h)$ yields
	\begin{equation*}
	w^n_t (h) = \frac 1 {2\pi i n} \sum_{j=1}^n \int_0^t  \int_{\gamma_{r,\eta}}e^{(t-s)\lambda} \left[ \nabla R(\lambda, \tfrac \Delta 2) h \right] (x^{j,n}_s) \dd \lambda \dd B^j_s,
	\end{equation*}
	splitting the complex integral into three real integrals parametrizing $\gamma_{r,\eta}$, and then using stochastic Fubini, one is left with expressions similar to
	\begin{equation*}
	\frac 1 {2\pi i n} \sum_{j=1}^n \int_0^t e^{(t-s)\rho e^{i\eta}} \left[ \nabla R(\rho e^{i\eta}, \tfrac \Delta 2) h \right] (x^{j,n}_s) e^{i\eta} \dd B^j_s, \quad \rho > r,
	\end{equation*}
	which remind us of 1-dimensional self-normalized martingale for every $\rho$, similar to the process $(v_t)_t$ considered in Example \ref{example_v}.
	
	It remains to establish a suitable bound on the expression 
	\[
	\left[ \nabla R(\rho e^{i\eta}, \tfrac \Delta 2) h \right](x^{j,n}_s) 
	\]
	and to ensure that this bound is integrable for $\rho \in (r, \infty)$, see Lemma \ref{lem:resEst}.
	
	%	Finally, we mention that instead of using the inequality of Lemma \ref{graversen_perskir}, we will work with the following generalization due to Jia and Zhao.
	%	\begin{lemma}
	%	\label{jia_zhao}
	%		Let $(M_t)_{t\in [0,T]}$ be a continuous local martingale vanishing at zero. Then there exists a universal constant $C$ such that
	%		\[
	%		\mathbb{E}\left[ \sup_{t\leq \tau}\frac{M_t^2}{1+\langle M\rangle_t}\right]\leq C \mathbb{E}\left[\log(1+\log(1+\langle M \rangle_\tau)) \right]
	%		\]
	%		for every stopping time $\tau\leq T$ .
	%	\end{lemma}

	Putting all the above considerations together with care, one obtains a maximal inequality for $w^n_t(h)$ that we present in Lemma \ref{lem:wpoint}.
	
	\begin{rem}
		\label{rem:maxIne}
		A similar control has already been used in \cite[Lemma 3.3]{cf:C19}, see also \cite[\S 3.1]{cf:BGP14} and \cite[\S 4]{cf:LP17} for an estimate using the Rodemich-Garsia-Rumsey lemma. However, in all these cases the particles are living in the one dimensional torus, making the (still highly technical) noise analysis considerably simpler due to the decomposition in Fourier series.
	\end{rem}
	
	\begin{lemma}
		\label{lem:wpoint}
		Assume $m>d/2$. There exists a constant $C\geq 1$, independent of $n$ and $h\in H^m$, such that for every $h \in H^m$
		\begin{equation}
		\mathbb{E}\left[\sup_{t\in[0,T]} \left|w^n_t(h)\right|^2 \right] \leq \frac{C}{n} \norm{h}^2_m.
		\end{equation}
	\end{lemma}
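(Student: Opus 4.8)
The plan is to insert the Dunford integral representation \eqref{d:dunInt} of the analytic semigroup $S$ into the definition \eqref{d:noise_term} of $w^n_t(h)$ and then, contour point by contour point, to exploit the self-normalized maximal inequality of Lemma \ref{graversen_perskir} exactly in the manner of the toy computation in Example \ref{example_v}. After substituting \eqref{d:dunInt} and applying a stochastic Fubini theorem --- legitimate because the relevant integrand is dominated along $\gamma_{r,\eta}$ by the resolvent bound below --- one rewrites
\[
w^n_t(h)=\frac{1}{2\pi i}\int_{\gamma_{r,\eta}}\Phi^n_t(\lambda)\,\dd\lambda,\qquad
\Phi^n_t(\lambda):=\frac1n\sum_{j=1}^n\int_0^t e^{(t-s)\lambda}\,\bigl[\nabla R(\lambda,\tfrac\Delta2)h\bigr](x^{j,n}_s)\cdot\dd B^j_s .
\]
By Minkowski's integral inequality, $\mathbb{E}\bigl[\sup_{t\in[0,T]}|w^n_t(h)|^2\bigr]^{1/2}\leq\frac1{2\pi}\int_{\gamma_{r,\eta}}\mathbb{E}\bigl[\sup_{t\in[0,T]}|\Phi^n_t(\lambda)|^2\bigr]^{1/2}\,|\dd\lambda|$, so the task reduces to bounding $\mathbb{E}[\sup_t|\Phi^n_t(\lambda)|^2]$ by a quantity that carries a factor $n^{-1}$ and is integrable in $\lambda$ along the contour.

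The first ingredient is the resolvent estimate of Lemma \ref{lem:resEst}. Passing to Fourier variables, $\widehat{\nabla R(\lambda,\tfrac\Delta2)h}(\xi)=i\xi\,\hat h(\xi)/(\lambda+|\xi|^2/2)$; combining the standard sectorial lower bound $|\lambda+|\xi|^2/2|\geq c_\eta(|\lambda|+|\xi|^2)$ with the elementary interpolation $\frac{|\xi|}{|\lambda|+|\xi|^2}\leq |\xi|^{1-2\theta}|\lambda|^{\theta-1}$ gives, for any $\theta\in(0,1/2)$ with $1-2\theta<m-d/2$ (a choice available precisely because $m>d/2$),
\[
\bigl\|\nabla R(\lambda,\tfrac\Delta2)h\bigr\|_\infty\leq C\int_{\R^d}\frac{|\xi|\,|\hat h(\xi)|}{|\lambda|+|\xi|^2}\,\dd\xi\leq\frac{C}{|\lambda|^{1/2+\delta}}\,\norm{h}_m,\qquad \delta:=\tfrac12-\theta>0,
\]
the last step being Cauchy--Schwarz applied to $\int|\xi|^{1-2\theta}|\hat h(\xi)|\,\dd\xi$. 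The strict gain $\delta>0$, inherited from $m>d/2$, is exactly what will make the $\lambda$-integral converge despite a logarithm appearing below.

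For the core estimate I would mirror Example \ref{example_v}. On the two rays of $\gamma_{r,\eta}$ one has $\mathrm{Re}\,\lambda=-a_\lambda$ with $a_\lambda=|\lambda|\,|\cos\eta|>0$; factoring out $e^{t\lambda}$ writes $\Phi^n_t(\lambda)=n^{-1}e^{t\lambda}\mathcal{N}^n_t$, where $\mathcal{N}^n_t:=\sum_{j=1}^n\int_0^t e^{-s\lambda}[\nabla R(\lambda,\tfrac\Delta2)h](x^{j,n}_s)\cdot\dd B^j_s$ is a single complex continuous martingale whose real and imaginary parts have joint quadratic variation $\sum_j\int_0^t e^{2a_\lambda s}|[\nabla R(\lambda,\tfrac\Delta2)h](x^{j,n}_s)|^2\,\dd s\leq n\,\|\nabla R(\lambda,\tfrac\Delta2)h\|_\infty^2\,\frac{e^{2a_\lambda t}-1}{2a_\lambda}$ --- a bound that, by independence of the $B^j$, is both deterministic and only $O(n)$. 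Rescaling $\mathcal{N}^n$ so that this joint quadratic variation becomes $\leq e^{2a_\lambda t}-1$ yields $|\Phi^n_t(\lambda)|^2\leq\frac{\|\nabla R(\lambda,\tfrac\Delta2)h\|_\infty^2}{2na_\lambda}\bigl(\frac{(M^{\mathrm{re}}_t)^2}{1+\langle M^{\mathrm{re}}\rangle_t}+\frac{(M^{\mathrm{im}}_t)^2}{1+\langle M^{\mathrm{im}}\rangle_t}\bigr)$ with $\langle M^{\mathrm{re}}\rangle_T,\langle M^{\mathrm{im}}\rangle_T\leq e^{2a_\lambda T}-1$, and Lemma \ref{graversen_perskir} applied to $M^{\mathrm{re}}$ and $M^{\mathrm{im}}$ gives
\[
\mathbb{E}\Bigl[\sup_{t\in[0,T]}|\Phi^n_t(\lambda)|^2\Bigr]\leq\frac{C}{n}\,\frac{\|\nabla R(\lambda,\tfrac\Delta2)h\|_\infty^2}{a_\lambda}\,\log(1+2a_\lambda T)\leq\frac{C}{n}\,\frac{\norm{h}_m^2}{|\lambda|^{2+2\delta}}\,\log(1+C|\lambda|).
\]
On the bounded arc $\{|\lambda|=r\}$ of $\gamma_{r,\eta}$, where $\mathrm{Re}\,\lambda$ is only bounded rather than negative, I would instead apply Doob's maximal inequality directly to $\mathcal{N}^n$, whose joint quadratic variation there is $\leq C_{r,T}\,n\,\|\nabla R(\lambda,\tfrac\Delta2)h\|_\infty^2$, obtaining $\mathbb{E}[\sup_t|\Phi^n_t(\lambda)|^2]\leq C_{r,T}\,\norm{h}_m^2/(n\,r^{1+2\delta})$.

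Feeding these two bounds back into Minkowski's inequality controls $\mathbb{E}[\sup_t|w^n_t(h)|^2]^{1/2}$ by $\frac{C\norm{h}_m}{\sqrt n}\bigl(\int_r^\infty\rho^{-1-\delta}\sqrt{\log(1+C\rho)}\,\dd\rho+r^{-1/2-\delta}\bigr)$, and the integral converges because the polynomial factor $\rho^{-1-\delta}$ with $\delta>0$ dominates the logarithm; squaring gives the claim with a constant depending only on $T$, $r$, $\eta$ and $d$, hence independent of $n$ and $h$. \textbf{The main obstacle} is the simultaneous reconciliation of three tensions: retaining the factor $n^{-1}$ under a supremum in time --- which forces one to treat $\mathcal{N}^n$ as a single martingale with $O(n)$ quadratic variation rather than summing particle by particle through the triangle inequality; the logarithmic blow-up $\log(1+a_\lambda T)\sim\log|\lambda|$ intrinsic to the self-normalized maximal inequality; and the fact that in the $L^\infty$ norm $\nabla R(\lambda,\tfrac\Delta2)$ decays only like $|\lambda|^{-1/2}$, which on its own would make the Dunford contour integral logarithmically divergent. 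The resolution is that $m>d/2$ strictly improves that decay to $|\lambda|^{-1/2-\delta}$, and this polynomial gain is precisely what absorbs the logarithm.
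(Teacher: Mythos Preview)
Your proposal is correct and follows essentially the same route as the paper: Dunford representation of $S$, then contour-pointwise application of the Graversen--Peskir self-normalized inequality exactly as in Example~\ref{example_v}, with the $|\lambda|^{-1/2-\delta}$ resolvent decay (available precisely because $m>d/2$) absorbing the resulting logarithm. The only cosmetic differences are that the paper uses a weighted Cauchy--Schwarz in the $\rho$-variable (weight $\rho^{(1+\epsilon)/2}$) in place of your Minkowski inequality over the contour, and obtains the pointwise resolvent bound via the embedding $\delta_x\in H^{-m+2\epsilon}$ combined with Lemma~\ref{lem:resEst} rather than your direct Fourier $L^\infty$ estimate.
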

	
	\begin{proof}
		Let $h\in H^{m}$ and $\gamma_{r,\eta}$ be the curve in \eqref{d:dunInt} with $\eta\in (\pi/2, \pi)$ and $r>0$. Since the real values of $\eta$ and $r$ are not crucial for the proof, we may suppose $r>1$. Using the decomposition of $S$ we obtain:
		\begin{equation*}
		\begin{split}
		w^n_t(h) &= \frac 1n \sum_{j=1}^n \int_0^t \left[\nabla S_{t-s} h \right] (x^{j,n}_s) \dd B^j_s = \\
		& = \frac 1 {2\pi i n} \sum_{j=1}^n \int_0^t \left[\nabla \int_{\gamma_{r,\eta}}e^{(t-s)\lambda} R(\lambda, \tfrac \Delta 2) h \dd \lambda \right] (x^{j,n}_s) \dd B^j_s = \\
		& = \frac 1 {2\pi i n} \sum_{j=1}^n \int_0^t  \int_{\gamma_{r,\eta}}e^{(t-s)\lambda} \left[ \nabla R(\lambda, \tfrac \Delta 2) h \right] (x^{j,n}_s) \dd \lambda \dd B^j_s= \\
		& = Z^1_t(h) + Z^2_t(h) + Z^3_t(h),
		\end{split}
		\end{equation*}
		where in the third step we have used that $\nabla$  is a closed linear operator on $D(\frac \Delta 2)$ and with
		\begin{equation}
		\label{eq:Z^i_t}
		\begin{split}
		& Z^1_t(h) := \; \frac 1 {2\pi i n} \sum_{j=1}^n \int_0^t \int_r^\infty  e^{(t-s)\rho e^{i\eta}} \left[ \nabla R(\rho e^{i\eta}, \tfrac \Delta 2) h \right] (x^{j,n}_s) e^{i\eta} \dd \rho \dd B^j_s, \\
		& Z^2_t(h) := \; \frac 1 {2\pi i n} \sum_{j=1}^n \int_0^t \int_{-\eta}^{\eta}  e^{(t-s)r e^{i\alpha}} \left[ \nabla R(r e^{i\alpha}, \tfrac \Delta 2) h \right] (x^{j,n}_s) ir e^{i\alpha} \dd \alpha \dd B^j_s, \\
		& Z^3_t(h) := - \frac 1 {2\pi i n} \sum_{j=1}^n \int_0^t \int_r^\infty  e^{(t-s)\rho e^{-i\eta}} \left[ \nabla R(\rho e^{-i\eta}, \tfrac \Delta 2) h \right] (x^{j,n}_s) e^{-i\eta} \dd \rho \dd B^j_s.
		\end{split}
		\end{equation} 
		Using the classical estimate $(a+b+c)^2 \leq 3(a^2+b^2+c^2)$, it follows that
		\begin{equation*}
		\left|w^n_t(h)\right|^2 \leq 3 \left[ \left|Z^1_t(h) \right|^2 + \left|Z^2_t(h) \right|^2 + \left|Z^3_t(h) \right|^2 \right].
		\end{equation*}
		We focus on $Z^1_t(h)$, but similar estimates for $Z^2_t(h)$ and $Z^2_t(h)$ follow in exactly the same way.
		
		Fix $\epsilon>0$ small, the stochastic Fubini theorem (e.g. \cite[\S 4.5]{cf:DpZ}) and Cauchy-Schwartz inequality imply that
		\begin{equation*}
		\begin{split}
		\left|Z^1_t (h)\right|^2 = \; \left|  \int_r^\infty  \left[\int_0^t \frac{\rho^{\frac{1+\epsilon}2}} {2\pi i n} \sum_{j=1}^n e^{(t-s)\rho e^{i\eta}} \left[ \nabla R(\rho e^{i\eta}, \tfrac \Delta 2) h \right] (x^{j,n}_s) e^{i\eta} \dd B^j_s\right] \frac{\dd\rho}{\rho^{\frac{1+\epsilon}2}} \right|^2 \leq \\
		\leq C \int_r^\infty \left| \int_0^t  \frac 1{n} \sum_{j=1}^n e^{(t-s)\rho e^{i\eta}} \left[ \nabla R(\rho e^{i\eta}, \tfrac \Delta 2) h \right] (x^{j,n}_s) e^{i\eta} \dd B^j_s \right|^2 \rho^{1+\epsilon} \dd\rho\\
		=\frac{C}{n^2} \int_r^\infty e^{-2t\rho(-\cos{\eta})}\left|\underbrace{ \int_0^t   \sum_{j=1}^n e^{-s\rho e^{i\eta}} \left[ \nabla R(\rho e^{i\eta}, \tfrac \Delta 2) h \right] (x^{j,n}_s) \dd B^j_s}_{=:M_t} \right|^2 \rho^{1+\epsilon} \dd\rho
		\end{split}
		\end{equation*}
		where $C= \frac 1{4\pi^2} \int_r^\infty \frac{\dd\rho}{\rho^{1+\epsilon}}$.
		
		\medskip
		
		We introduce the continuous martingale $X^{\epsilon,\rho}_\cdot (h)$ defined for $t\geq0$ by
		\begin{equation*}
		%		\label{eq:X}
		X^{\epsilon,\rho}_t (h) := \rho^{1/2+\epsilon}  \sqrt{\frac{-2\rho\cos\eta}{\norm{h}^2_m \, n}} M_t,
		\end{equation*}
		so to obtain
		\begin{equation*}
		\begin{split}
		\left|Z^1_t (h) \right|^2 &\leq \frac {C}{-2n\cos{\eta}} \norm{h}^2_m \int_r^{\infty} e^{2t\rho\cos\eta} \, \left|X^{\epsilon,\rho}_t(h) \right|^2 \frac{\dd\rho}{\rho^{1+\epsilon}}\\
		&\leq \frac {C}{n} \norm{h}^2_m \int_r^{\infty} e^{2t\rho\cos\eta} \, \left|X^{\epsilon,\rho}_t(h) \right|^2 \frac{\dd\rho}{\rho^{1+\epsilon}}.
		\end{split}
		\end{equation*}
		where we absorbed the factor $(-2\cos\eta)^{-1}$ in the unessential constant $C$.
		
		\medskip
		
		We compute the quadratic variation of $X^{\epsilon,\rho}_t(h)$:
		\begin{equation*}
		\langle X^{\epsilon,\rho} (h) \rangle_t = \rho^{1+2\epsilon} \frac{(-2 \rho\cos\eta)}{\norm{h}^2_m n}  \sum_{j=1}^n \int_0^t e^{-2s\rho \cos\eta} \left[\nabla R(\rho e^{i\eta},\tfrac \Delta 2) h\right]^2 (x^{j,n}_s) \dd s.
		\end{equation*}
		Lemma \ref{lem:embPro} assures that for every $\epsilon$ such that $0 < 2 \epsilon <(m-d/2)\wedge 1$, $\cP(\R^d)$ is continuously embedded in $H^{-m+2\epsilon}$, in particular
		\begin{equation}
		\label{eq:crucial_est}
		\begin{split}
		\Big|\left[\nabla R(\rho e^{i\eta},\tfrac \Delta 2) h\right] & (x^{j,n}_s)\Big| =  \left| \langle \delta_{x^{j,n}_s}\, ,\, \nabla R(\rho e^{i\eta},\tfrac \Delta 2) h \rangle_{-m,m} \right|= \\
		= \, & \left| \langle \delta_{x^{j,n}_s},  \nabla R(\rho e^{i\eta},\tfrac \Delta 2) h \rangle_{-m+2\epsilon,m-2\epsilon}\right| \leq \\
		\leq & \norm{\delta_{x^{j,n}_s}}_{-m +2\epsilon}  \norm{\nabla R(\rho e^{i\eta},\tfrac \Delta 2) h}_{m-2\epsilon} \leq C \frac{ \norm{h}_m}{\rho^{1/2+\epsilon}},
		\end{split}
		\end{equation}
		where we have exploited the properties of the resolvent operator $R$, see Lemma \ref{lem:resEst}. 
		
		\medskip

		Thus, the quadratic variation of $X^{\epsilon,\rho}_t(h)$ is bounded $\bbP$-a.s. by
		\begin{equation}
		\label{eq:XquaVarEst}
		\langle X^{\epsilon,\rho} (h) \rangle_t \leq  C (-2 \rho \cos \eta) \int_0^t e^{-2s\rho\cos\eta} \dd s = C \left(e^{-2t\rho\cos\eta} -1\right).
		\end{equation}
		Observe then
		\begin{equation*}
		\begin{split}
		\mathbb{E}\left[ \sup_{t \in [0,T]} \left| Z^1_t (h) \right|^2 \right] \leq  	\frac {C}{n} \norm{h}^2_m \int_r^{\infty} \mathbb{E}\left[ \sup_{t \in [0,T]} e^{2t\rho\cos\eta} \, \left|X^{\epsilon,\rho}_t(h) \right|^2 \right] \frac{\dd\rho}{\rho^{1+\epsilon}} \leq \\
		\leq  \frac {C}{n}  \norm{h}^2_m \int_r^{\infty} \mathbb{E}\left[ \sup_{t \in [0,T]}  \, \frac{ \left|X^{\epsilon,\rho}_t(h)\right|^2} {1 + \langle X^{\epsilon,\rho}(h)\rangle_t } \left( \sup_{t \in [0,T]} \frac{1 + \langle X^{\epsilon,\rho} (h) \rangle_t}{e^{-2t\rho\cos\eta}} \right) \right] \frac{\dd\rho}{\rho^{1+\epsilon}}.
		\end{split}
		\end{equation*}
		The term $\sup_{t \in [0,T]} \frac{1 + \langle X^{\epsilon,\rho} (h) \rangle_t}{e^{-2t\rho\cos\eta}}$ is bounded using \eqref{eq:XquaVarEst} by a constant, wherefore we are left with
		\begin{equation*}
		\begin{split}
		\mathbb{E}\left[ \sup_{t \in [0,T]} \left| Z^1_t (h) \right|^2 \right] \leq
		\frac {C}{n}  \norm{h}^2_m \int_r^{\infty} \mathbb{E}\left[ \sup_{t \in [0,T]}  \, \frac{ \left|X^{\epsilon,\rho}_t(h)\right|^2} {1 + \langle X^{\epsilon,\rho}(h)\rangle_t } \right] \frac{\dd\rho}{\rho^{1+\epsilon}}.
		\end{split}
		\end{equation*}
		We now invoke Lemma \ref{graversen_perskir}, which in conjunction with \eqref{eq:XquaVarEst} allows to deduce that
		\begin{equation*}
		\begin{split}
		\mathbb{E}\left[ \sup_{t \in [0,T]}  \, \frac{ \left|X^{\epsilon,\rho}_t(h)\right|^2} {1 + \langle X^{\epsilon,\rho}(h)\rangle_t } \right] &\leq C \, \mathbb{E}\left[\log \left(1+ \log\left(1+ \langle X^{\epsilon,\rho} (h) \rangle_T \right)\right) \right]\\
		&\leq C \mathbb{E}\left[\log \left(1-2T\rho \cos \eta +\log (C)\right) \right]
		\end{split}
		\end{equation*}
		where in the last inequality, we have bounded the constant $C$ appearing in \eqref{eq:XquaVarEst} by $\max\{1, C\}$.
		Further modifying $C$ accordingly, we are thus left with
		\begin{equation*}
		\mathbb{E}\left[ \sup_{t \in [0,T]} \left| Z^1_t (h) \right|^2 \right] \leq \frac Cn \norm{h}^2_m \int_r^\infty \log(1 -2 T\rho \cos \eta  +\log(C)) \frac{\dd\rho}{\rho^{1+\epsilon}} \leq \frac{C}{n} \norm{h}^2_m.
		\end{equation*}
		%		Note in particular that without the regularity/integrability trade-off of \eqref{eq:crucial_est}, i.e. in the case $\epsilon=0$, the above integral over $\rho$ would be divergent, which is why this step was necessary.
		
		\medskip
		
		Concerning $Z^3_t(h)$, computations are the same if one replaces $\eta$ by $-\eta$. 		Concerning $Z^2_t(h)$, computations are easier since there is no a priori diverging integral to deal with and we omit the proof.	The overall bound on $w^n_t(h)$ is thus obtained by summing the three estimates and choosing the constant $C$ accordingly.
	\end{proof}
	
	\begin{rem}
		\label{rem:topology}
		Note that Lemma \ref{lem:wpoint} implies by Jensen's inequality the following bound
		\[
		\mathbb{E}\left[\sup_{t\in[0,T]} \left|w^n_t(h)\right| \right] \leq \frac C{\sqrt{n}} \norm{h}_m,
		\]
		which is sharper in $n$ with respect to \eqref{eq:wbound}, but in a weaker topology. One could ask if it is possible to establish a similar $O(1/\sqrt{n})$ bound for
		\[
		\mathbb{E} \left[ \sup_{ t \in [0,T]} \norm{w^n_t}_{-m} \right]=\mathbb{E} \left[\sup_{t \in [0,T]} \sup_{\norm{h}_m\leq 1}|w^n_t(h)|\right].
		\]
		Such a bound cannot be obtained by rough paths theory and a full probabilistic proof, which takes the independence between the Brownian motions into account, is desirable. To the authors' knowledge, this has been established only in the case of interacting oscillators; we refer to the noise term analysis in \cite{cf:BGP14,cf:C19,cf:LP17}.
	\end{rem}
	
	\medskip
	
	\subsection{Proving Theorem \ref{thm:main}}
	The proof of Theorem \ref{thm:main} consists in two steps: using the pathwise bound on $w^n$, Lemma \ref{lem:alaoglu} shows that we can extract from $(\nu^n)_{n\in\N}$ a weak-*-convergence subsequence; then, by exploiting the probability bound on $w^n_t(h)$ for a fixed $h\in H^m$, we identify through Lemma \ref{lem:limit} the limit with a solution to \eqref{d:weakMildSol}.
	
	\subsubsection{Extraction of a weak-*-convergent subsequence}
	The main result of this subsection is given by the next lemma.
	
	\medskip
	
	\begin{lemma}
		\label{lem:alaoglu}
		The sequence $(\nu^n)_{n\in \mathbb{N}}$ is uniformly bounded in $L^\infty([0,T], H^{-m})$ and thus admits a subsequence that converges weak-* to some $\nu \in H^{-m}$, $\mathbb{P}$-a.s.. 
	\end{lemma}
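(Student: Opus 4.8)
The plan is to obtain the uniform bound almost for free from the continuous embedding of probability measures into $H^{-m}$, and then to extract the subsequence by a Banach--Alaoglu argument in the dual space $L^\infty([0,T],H^{-m})$. \textbf{A priori bound.} For every $n$ and every $t\in[0,T]$ the empirical measure $\nu^n_t$ belongs to $\cP(\R^d)$, hence Lemma \ref{lem:embPro} — which is nothing but the dual of the Sobolev embedding $H^m\hookrightarrow C_b(\R^d)$ — yields $\norm{\nu^n_t}_{-m}=\sup_{\norm{h}_m\leq 1}|\langle\nu^n_t,h\rangle|\leq\sup_{\norm{h}_m\leq 1}\norm{h}_\infty\leq C$ with $C$ the embedding constant, independent of $n$, $t$ and $\omega$; so $\sup_n\norm{\nu^n}_{L^\infty([0,T],H^{-m})}\leq C$. (If one insists on passing through the mild formulation \eqref{d:mild_emp}, the same bound follows from $\norm{S_th}_m\leq\norm{h}_m$, the estimate \eqref{critical_bound}, the pathwise bound $|w^n_t(h)|\leq C_\alpha(1+T)^{3\alpha}\norm{h}_m$ of Lemma \ref{lem:wbound}, and a singular Gronwall lemma applied to $t\mapsto\norm{\nu^n_t}_{-m}$, the bound on $\norm{\nu^n_0}_{-m}$ again being that of Lemma \ref{lem:embPro}.) One also checks that $\nu^n$ is a genuine element of $L^\infty([0,T],H^{-m})$: since $\norm{\delta_x-\delta_y}_{-m}=\sup_{\norm{h}_m\leq 1}|h(x)-h(y)|\leq C|x-y|$ by $H^m\hookrightarrow C^1_b$ (legitimate because $m>d/2+1$) and $t\mapsto x^{j,n}_t$ is $\bbP$-a.s.\ continuous, the path $t\mapsto\nu^n_t$ is $\bbP$-a.s.\ continuous, hence Bochner measurable, into $H^{-m}$.

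\textbf{Extraction of the subsequence.} The space $L^\infty([0,T],H^{-m})$ is the topological dual of $L^1([0,T],H^m)$; the latter is separable because $H^m$ is a separable Hilbert space, and the duality is valid because $H^{-m}$ is reflexive and thus has the Radon--Nikodym property. Hence bounded subsets of $L^\infty([0,T],H^{-m})$ are weak-$*$ sequentially compact (Banach--Alaoglu together with the metrizability of the weak-$*$ topology on bounded subsets of the dual of a separable space). Applying this on the full-probability event on which the bounds above hold, the bounded sequence $(\nu^n(\omega))_n$ admits, for each such $\omega$, a subsequence $(\nu^{n_k}(\omega))_k$ converging weak-$*$ to some $\nu(\omega)\in L^\infty([0,T],H^{-m})$, which is the assertion.

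\textbf{Where the difficulty lies.} This is soft functional analysis, so no step is genuinely hard; the points that deserve care are (i) that the embedding $\cP(\R^d)\hookrightarrow H^{-m}$ already delivers the uniform bound, so that no nonlinear estimate (which in \eqref{d:mild_emp} would be quadratic in $\norm{\nu^n_s}_{-m}$) needs to be pushed through; (ii) that $L^\infty([0,T],H^{-m})$ must be identified as the dual of a \emph{separable} space, this being exactly what makes the extraction sequential; and (iii) that the extracted subsequence $(n_k)$ is in general $\omega$-dependent — harmless for the present $\bbP$-a.s.\ statement, but something to keep track of when, in the subsequent lemmas, one passes to the limit in \eqref{d:mild_emp} and combines it with the probabilistic bound of Lemma \ref{lem:wpoint}.
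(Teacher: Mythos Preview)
Your proof is correct and takes a genuinely shorter route than the paper. The paper obtains the uniform bound by working through the mild formulation \eqref{d:mild_emp}: it bounds the noise via the rough-path estimate of Lemma \ref{lem:wbound}, uses \eqref{critical_bound} on the nonlinear term, and closes with a singular Gronwall inequality --- precisely the alternative you sketch in parentheses. Your direct application of the embedding $\cP(\R^d)\hookrightarrow H^{-m}$ (Lemma \ref{lem:embPro}) at \emph{every} $t$, not just at $t=0$, is both more elementary and yields a \emph{deterministic} bound, whereas the paper's constant $C_{\alpha,T}$ is random (finite $\bbP$-a.s.) through the rough-path constant $C_\alpha$. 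What the paper's detour buys is a demonstration that the mild equation closes in $H^{-m}$ without appealing to the fact that $\nu^n_t$ happens to be a probability measure --- a structural point that is morally relevant when one later passes to the limit in the nonlinear term, even if logically redundant here. Your additional remarks on Bochner measurability of $t\mapsto\nu^n_t$ and on identifying $L^\infty([0,T],H^{-m})$ as the dual of a separable predual (via the Radon--Nikodym property of the reflexive space $H^{-m}$) make explicit points the paper leaves implicit; your observation that the extracted subsequence is $\omega$-dependent is likewise correct and becomes relevant in Lemma \ref{lem:limit}.
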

	
	\begin{proof}
		It suffices to show that $(\nu^n)_{n\in \mathbb{N}}$ is uniformly bounded in $L^\infty([0,T], H^{-m})$ $\mathbb{P}$-a.s., an application of Banach-Alaoglu yields the existence of a convergent subsequence.
		
		Exploiting the mild formulation in Lemma \ref{mild_formulation} and the bound on $w^n_t(h)$ in Lemma \ref{lem:wbound} for some $\alpha \in (1/3, 1/2)$, one obtains that
		\begin{equation*}
		\begin{split}
		\norm{\nu^n_t}_{-m} & \leq \norm{\nu^n_0}_{-m} + \int_0^t \norm{\nu^n_s}_{-m} \sup_{\norm{h}_m \leq 1} \norm{(\nabla S_{t-s} h) (\Gamma*\nu^n_s)}_m \dd s + \norm{w^n_t}_{-m} \\
		& \leq \norm{\nu^n_0}_{-m} +  \int_0^t \frac{C}{\sqrt{t-s}} \norm{\nu^n_s}_{-m} \dd s + C_\alpha (1+t)^{3\alpha},
		\end{split}
		\end{equation*}
		where we have exploited the properties of the semigroup and the bound already used in \eqref{critical_bound}. A Gronwall-like argument implies the existence of a constant $a$ independent of $n$ and $T$ such that
		\begin{equation*}
		\sup_{t \in [0,T]} \norm{\nu^n_t}_{-m} \leq 2 \left(\norm{\nu^n_0}_{-m} + C_\alpha (1+T)^{3\alpha} \right)\sqrt{T} e^{a\sqrt{T}}.
		\end{equation*}
		In particular, using Lemma \ref{lem:embPro}, we conclude
		\begin{equation*}
		\sup_{n \in \N} \sup_{t \in [0,T]} \norm{\nu^n_t}_{-m} \leq C_{\alpha,T}.
		\end{equation*}
	\end{proof}
	%\begin{rem}
	%Note that in the above application of Gronwall, it was crucial that the singularity $1/\sqrt{t-s}$ is integrable. In particular, this show that we can have a uniform bound in $L^p(\Omega, C([0,T], H^m))$ only for $p\in [1, 2)$. {\blue can we obtain an estimate in $L^q$ for $q \in (2,\infty]$?}
	%\end{rem}
	
	We move to the identification of the limit $\nu \in L^\infty([0,T], H^{-m})$.
	
	\medskip

	\subsubsection{The limit coincides with an $m$-weak-mild solution}
	We prove that any possible limit of $(\nu^n)_{n\in\N}$ is a weak-mild solution \eqref{d:weakMildSol}.  Given the uniqueness of \eqref{d:weakMildSol}, this implies the weak-* convergence in $L^\infty([0,T], H^{-m})$ of $(\nu^n)_{n\in \N}$ to the element $\nu$ given in Lemma \ref{lem:alaoglu}.
	
	\medskip
	
	\begin{lemma}
		\label{lem:limit}
		Let $(\nu^n)_{n\in\N}$ be converging weak-* to some $\bar{\nu} \in L^{\infty}([0,T],H^{-m})$ $\mathbb{P}$-a.s. along a subsequence that we denote by $(\nu^{n_k})_{k\in\N}$. Then $\bar{\nu}$ satisfies \eqref{d:weakMildSol}, i.e.
		\begin{equation*}
		\langle \bar{\nu}_t, h\rangle_{-m,m} =\langle \bar{\nu}_0, S_{t}h\rangle_{-m,m}+\int_0^t\langle\bar{\nu}_s, (\nabla S_{t-s}h) (\Gamma * \bar{\nu}_s)\rangle_{-m,m} \dd s,
		\end{equation*}
		meaning $\bar{\nu}$ is an $m$-weak-mild solution to \eqref{eq:McKean-Vlasov}. 
	\end{lemma}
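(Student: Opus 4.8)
The plan is to pass to the limit in the $n$-dependent weak-mild formulation \eqref{d:mild_emp} term by term, along the subsequence $(\nu^{n_k})_{k\in\N}$, exploiting the weak-$*$ convergence in $L^\infty([0,T],H^{-m})$ together with the probabilistic bound on the noise from Lemma \ref{lem:wpoint}. Fix $h \in H^m$ and $t \in [0,T]$. For the left-hand side, $\langle \nu^{n_k}_t, h \rangle_{-m,m}$, note that pointwise-in-$t$ convergence is not immediate from weak-$*$ convergence in $L^\infty([0,T],H^{-m})$ (which only tests against $L^1$-in-time integrands); the standard remedy is to integrate \eqref{d:mild_emp} against an arbitrary test function $\psi \in L^1([0,T])$ in the time variable and show the resulting identity passes to the limit, then conclude the pointwise identity for a.e.\ $t$, and finally for all $t$ using the fact that both sides are continuous in $t$ (the right-hand side of \eqref{d:weakMildSol} is continuous in $t$ by the semigroup's strong continuity and dominated convergence, using \eqref{critical_bound}). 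Alternatively, and more cleanly, one integrates the whole equation in time against $\psi$ from the outset.

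The term $\langle \nu^{n_k}_0, S_t h \rangle_{-m,m}$ converges to $\langle \bar\nu_0, S_t h \rangle_{-m,m}$: here one needs that the initial data converge, $\nu^{n_k}_0 \rightharpoonup \bar\nu_0$ in $H^{-m}$. This should follow because the weak-$*$ limit $\bar\nu$ inherits $\bar\nu_0 = \nu_0$ from the hypothesis $\nu^n_0 \rightharpoonup \nu_0$ in probability (passing to an a.s.-convergent sub-subsequence); since $S_t h \in H^m$ this is just the definition of weak convergence in $H^{-m}$. The noise term $w^{n_k}_t(h) \to 0$: by Lemma \ref{lem:wpoint}, $\bbE[\sup_{t}|w^{n_k}_t(h)|^2] \leq C\norm{h}_m^2/n_k \to 0$, so $w^{n_k}_t(h) \to 0$ in $L^2$ and hence (along a further subsequence) a.s., uniformly in $t$.

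The main obstacle is the nonlinear drift term $\int_0^t \langle \nu^{n_k}_s, (\nabla S_{t-s}h)(\Gamma * \nu^{n_k}_s)\rangle_{-m,m}\,\dd s$, which is \emph{quadratic} in $\nu^{n_k}$, so weak-$*$ convergence alone is a priori insufficient. The idea is to split: write $(\nabla S_{t-s}h)(\Gamma*\nu^{n_k}_s) - (\nabla S_{t-s}h)(\Gamma*\bar\nu_s) = (\nabla S_{t-s}h)\big(\Gamma*(\nu^{n_k}_s - \bar\nu_s)\big)$. For the first (already-linear-in-$\nu^{n_k}$) piece $\langle \nu^{n_k}_s, (\nabla S_{t-s}h)(\Gamma*\bar\nu_s)\rangle$, one uses that $s \mapsto (\nabla S_{t-s}h)(\Gamma*\bar\nu_s)$ is a fixed $H^m$-valued function with $\norm{\cdot}_m \leq C(t-s)^{-1/2}$ by \eqref{critical_bound}, hence lies in $L^1([0,T];H^m)$ — exactly the class against which weak-$*$ convergence in $L^\infty([0,T];H^{-m})$ tests — so this term converges. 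For the remainder $\langle \nu^{n_k}_s, (\nabla S_{t-s}h)(\Gamma*(\nu^{n_k}_s-\bar\nu_s))\rangle$, one bounds it by $\norm{\nu^{n_k}_s}_{-m}\, \norm{\nabla S_{t-s}h}_m\, \norm{\Gamma*(\nu^{n_k}_s-\bar\nu_s)}_{W^{m,\infty}} \leq C(t-s)^{-1/2}\norm{\Gamma(\cdot_x,\cdot_y)}_{H^m_y W^{m,\infty}_x}\,\norm{\nu^{n_k}_s - \bar\nu_s}_{-m}$, using the uniform bound from Lemma \ref{lem:alaoglu} and \eqref{critical_bound}. It then remains to argue $\norm{\nu^{n_k}_s - \bar\nu_s}_{-m} \to 0$ for a.e.\ $s$, or at least that $\int_0^t (t-s)^{-1/2}\norm{\nu^{n_k}_s-\bar\nu_s}_{-m}\,\dd s \to 0$; this is the delicate point, since we only have weak-$*$, not strong, convergence. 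The resolution is presumably to upgrade: the equation \eqref{d:mild_emp} itself provides equicontinuity/compactness of $(\nu^{n_k})$ in a space where the embedding into, say, $C([0,T];H^{-m'})$ for some $m' > m$ is compact (an Aubin–Lions-type argument), giving strong convergence in $L^1([0,T];H^{-m'})$ and hence, after interpolating against the uniform $H^{-m}$ bound, control of the $W^{m,\infty}$-norm of $\Gamma * (\nu^{n_k}_s - \bar\nu_s)$ — note $\Gamma*\mu$ only requires testing $\mu$ against the smooth functions $\Gamma(x,\cdot)$, which live in $H^{m'}$ uniformly in $x$, so a weaker topology on $\mu$ suffices here. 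Once both pieces are shown to vanish, combining with the convergence of the initial and noise terms yields that $\bar\nu$ satisfies \eqref{d:weakMildSol}, completing the proof.
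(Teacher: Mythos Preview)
Your overall plan matches the paper: pass to the limit term by term in \eqref{d:mild_emp}, handle the initial datum via weak convergence in $H^{-m}$, kill the noise via Lemma \ref{lem:wpoint} (extracting an a.s.-convergent sub-subsequence), and split the nonlinear drift into a piece linear in $\nu^{n_k}$ plus a remainder. For the linear piece you argue exactly as the paper does, recognizing $s\mapsto \mathbf{1}_{[0,t]}(s)(\nabla S_{t-s}h)(\Gamma*\bar\nu_s)$ as an element of $L^1([0,T];H^m)$ via \eqref{critical_bound} and invoking weak-$*$ convergence. Your care in distinguishing weak-$*$ convergence in $L^\infty_t H^{-m}$ from pointwise-in-$t$ weak convergence is in fact more explicit than the paper's own presentation, which simply asserts the latter ``by hypothesis''.

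The substantive divergence is in the remainder $\int_0^t\langle \nu^{n_k}_s,(\nabla S_{t-s}h)(\Gamma*(\nu^{n_k}_s-\bar\nu_s))\rangle\,\dd s$. You bound it by $C\int_0^t(t-s)^{-1/2}\norm{\nu^{n_k}_s-\bar\nu_s}_{-m}\,\dd s$ and propose to close via Aubin--Lions compactness, upgrading weak-$*$ to strong convergence in a weaker space. This is not the paper's route, and it carries a genuine gap: Aubin--Lions needs uniform time-equicontinuity of $(\nu^{n_k})$ in some $H^{-m'}$, which you assert the equation provides but do not verify. Extracting a modulus of continuity in $t$ for $w^{n_k}$ in any negative Sobolev norm from the available estimates (Lemmas \ref{lem:wbound} and \ref{lem:wpoint} give only pointwise-in-$t$ bounds) is not immediate.

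The paper avoids strong convergence entirely by a Fubini swap. Since $(\Gamma*(\nu^{n_k}_s-\bar\nu_s))(x)=\langle \nu^{n_k}_s-\bar\nu_s,\Gamma(x,\cdot)\rangle$, one interchanges the two pairings to obtain
\[
\langle \nu^{n_k}_s,(\nabla S_{t-s}h)(\Gamma*(\nu^{n_k}_s-\bar\nu_s))\rangle_{-m,m}
=\Bigl\langle \nu^{n_k}_s-\bar\nu_s,\; y\mapsto \tfrac{1}{n_k}\sum_{j=1}^{n_k}(\nabla S_{t-s}h)(x^{j,n_k}_s)\,\Gamma(x^{j,n_k}_s,y)\Bigr\rangle_{-m,m}.
\]
The new test function in $y$ is bounded in $H^m$ by $C\norm{h}_m(t-s)^{-1/2}\norm{\Gamma}_{H^m_yW^{m,\infty}_x}$ uniformly in $k$ (using only that $\nu^{n_k}_s$ is a probability measure and $\Gamma(x,\cdot)\in H^m$ uniformly in $x$), hence lies in $L^1([0,T];H^m)$, and weak-$*$ convergence applies once more. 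Your closing parenthetical---that ``$\Gamma*\mu$ only requires testing $\mu$ against $\Gamma(x,\cdot)$''---is exactly the seed of this trick; the paper exploits it directly as an interchange of pairings rather than as motivation for a compactness upgrade.
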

	
	\begin{proof}
		Recall that for every $n$, $\nu^n$ solves the mild formulation \eqref{d:mild_emp}, i.e. for $t\in[0,T]$
		\begin{equation*}
		\langle \nu^n_t, h\rangle_{-m,m} =\langle \nu^n_0, S_{t}h\rangle_{-m,m} + \int_0^t\langle\nu^n_s, (\nabla S_{t-s}h) (\Gamma * \nu^n_s)\rangle_{-m,m} \dd s+w^n_t(h).
		\end{equation*}
		By hypothesis we have that for every $t\in [0,T]$ and $h \in H^m$
		\begin{equation*}
		\lim_{k\to \infty} \langle \nu^{n_k}_t , h \rangle_{-m,m} = \langle \bar{\nu}_t, h \rangle_{-m,m}, \quad \bbP\text{-a.s.}.
		\end{equation*}
		In particular, this is true for $(\nu^{n_k}_0)_k$ since $S_th \in H^m$. Furthermore, Lemma \ref{lem:wpoint} implies that
		\begin{equation*}
		\lim_{k\to\infty} w^{n_k}_t(h) = 0, \quad \text{in $\bbP$-probability}
		\end{equation*}
		and thus in particular the convergence holds $\mathbb{P}$-a.s. along a sub-subsequence $(n_{k_j})_j$. Thus, it remains to show that $\bbP$-a.s.
		\begin{equation}
		\label{crucial_2}
		\lim_{j\to \infty} \int_0^t\langle\nu^{n_{k_j}}_s, (\nabla S_{t-s}h) (\Gamma * 
		\nu^{n_{k_j}}_s)\rangle_{-m,m} \dd s = \int_0^t\langle\bar{\nu}_s, (\nabla S_{t-s}h) (\Gamma * \bar{\nu}_s)\rangle_{-m,m} \dd s.
		\end{equation}
		For better readability and lighter notation, we will not distinguish between $n$ and $n_{k_j}$ in the following, understanding that we continue to work on the sub-subsequence. 
		Consider then
		\begin{equation*}
		\begin{split}
		&\langle\bar{\nu}^n_s, (\nabla S_{t-s}h) (\Gamma * \bar{\nu}^n_s)\rangle_{-m,m} - \langle\bar{\nu}_s, (\nabla S_{t-s}h) (\Gamma * \bar{\nu}_s)\rangle_{-m,m} = \\
		& = \langle\bar{\nu}^n_s - \bar{\nu}_s, (\nabla S_{t-s}h) (\Gamma * \bar{\nu}_s)\rangle_{-m,m} +
		\langle\bar{\nu}^n_s, (\nabla S_{t-s}h) (\Gamma *(\bar{\nu}^n_s - \bar{\nu}_s))\rangle_{-m,m}.
		\end{split}
		\end{equation*}
		Using again \eqref{critical_bound}, it is easy to see that $\bbP$-a.s.
		\[
		\textbf{1}_{[0,t]}(s)(\nabla S_{t-s}h)(\Gamma*\nu_s)\in L^1([0,T], H^m)
		\]
		wherefore it is indeed an element of the predual to $L^\infty([0,T], H^{-m})$ and thus by weak-* convergence in this space, we have
		\[
		\int_0^T \langle \nu^n_s-\nu_s, \textbf{1}_{[0,t]}(s)(\nabla S_{t-s}h) (\Gamma * \nu_s)\rangle \dd s= \int_0^t\langle\nu^n_s-\nu_s, (\nabla S_{t-s}h) (\Gamma * \nu_s)\rangle \dd s\rightarrow 0.
		\]
		For the second term, note that 
		\begin{equation*}
		\begin{split}
		& \langle\bar{\nu}^n_s, (\nabla S_{t-s}h) (\Gamma *(\bar{\nu}^n_s - \bar{\nu}_s))\rangle_{-m,m} = \\
		& = \langle\bar{\nu}^n_s - \bar{\nu}_s, \langle \bar{\nu}^n_s (\dd x) , (\nabla S_{t-s}h)(x) (\Gamma (x, \cdot))\rangle_{-m,m}\rangle_{-m,m}
		\end{split}
		\end{equation*}
		and that the function
		\begin{equation*}
		\begin{split}
		y \mapsto & \; \langle \bar{\nu}^n_s (\dd x) , (\nabla S_{t-s}h)(x) (\Gamma (x, y))\rangle_{-m,m} = \\
		& = \frac 1n \sum_{j=1}^n (\nabla S_{t-s} h(x^{j,n}_s)) \cdot (\Gamma(x^{j,n}_s,y))
		\end{split}
		\end{equation*}
		is in $H^m$ for every $s \in [0,t]$, since $\Gamma(x, \cdot) \in H^m$ for a.e. $x \in \R^d$, recall \eqref{h:Gamma}. Namely,
		\begin{equation*}
		\begin{split}
		\norm{\frac 1n \sum_{j=1}^n (\nabla S_{t-s} h(x^{j,n}_s)) \cdot (\Gamma(x^{j,n}_s,\cdot))}_{m} & \leq \norm{\nabla S_{t-s} h}_\infty \norm{\Gamma}_{H^m_y L^{\infty}_x} \\
		& \leq  C \frac{\norm{h}_m}{\sqrt{t-s}} \norm{\Gamma}_{H^m_y W^{m,\infty}_x} .
		\end{split}
		\end{equation*}
		We conclude that
		\[
		\textbf{1}_{[0,t]}\langle \bar{\nu}^n_s (\dd x) , (\nabla S_{t-s}h)(x) (\Gamma (x, \cdot))\rangle_{-m,m} \in L^1([0,T], H^m)
		\]
		and in particular
		\[
		\int_0^T \langle\bar{\nu}^n_s - \bar{\nu}_s, \langle \bar{\nu}^n_s (\dd x) , \textbf{1}_{[0,t]}(s) \, (\nabla S_{t-s}h)(x) \cdot (\Gamma (x, \cdot))\rangle_{-m,m}\rangle_{-m,m} \dd s \rightarrow 0.
		\]
		This establishes \eqref{crucial_2}.

		\medskip
		
		Overall, we have thus shown that any subsequence of $(\nu^n)_n$ converges along some further subsequence $\mathbb{P}$-a.s. weak-* in $L^\infty([0,T], H^{-m})$, the limit $\bar{\nu}$ satisfying for every $h\in H^m$ the equation
		\begin{equation*}
		\langle \bar{\nu}_t, h\rangle_{-m,m} =\langle \bar{\nu}_0, S_{t}h\rangle_{-m,m}+\int_0^t\langle\bar{\nu}_s, (\nabla S_{t-s}h) (\Gamma * \bar{\nu}_s)\rangle_{-m,m} \dd s,
		\end{equation*}
		meaning that $\bar{\nu}$ is indeed an $m$-weak-mild solution.
	\end{proof}

	\medskip
	
	\subsubsection{Proof of Theorem \ref{thm:main}}
	In order to show that $\nu^n\overset{*}{\rightharpoonup}\nu $ in $L^\infty([0,T], H^{-m})$ in probability, we show that any subsequence $(\nu^{n_k})_k$ admits a further subsequence that converges $\bbP$-a.s. in weak-* topology of $L^\infty([0,T], H^{-m})$ to $\nu$.
	
	Let $(\nu^{n_k})_k$ be hence a subsequence. By assumption of the Theorem, Lemmas \ref{lem:wpoint} and \ref{lem:alaoglu}, we find a further subsequence $(\nu^{n_{k_j}})_j$, along which 
	\begin{equation}
	\begin{split}
	w^{n_{k_j}}_t(h)&\rightarrow 0 \quad \forall h\in H^m\\
	\langle \nu_0^{n_{k_j}}, h\rangle &\rightarrow \langle \nu_0, h\rangle  \quad \forall h\in H^m\\
	\nu^{n_{k_j}}&\overset{*}{\rightharpoonup} \bar{\nu} \quad \mbox{in}\  L^\infty([0,T], H^{-m})
	\label{eq:convergences}
	\end{split}
	\end{equation}
	$\bbP$-a.s., where the limit $\bar{\nu}\in L^\infty([0,T], H^{-m})$ may apriori depend on the subsequence chosen. Notice however that due to Lemma \ref{lem:limit}, any such limit is a $m$-weak-mild solution to \eqref{eq:McKean-Vlasov}. By the uniqueness result of Proposition \ref{prop:uniqueness}, we conclude that the limit $\bar{\nu}=\nu$ must be the same for any subsequence chosen.
	
	The first part of the Theorem is proved. Note that apriori, our limit $\nu$ is only a distribution in $H^{-m}$ at each fixed timepoint.
	
	\medskip
	
	Suppose $\nu_0 \in \cP(\R^d)$. In order to show that $\nu_t$ is actually a probability measure for each $t\in [0,T]$, we observe that a weak solution $\mu \in C([0,T],\cP(\R^d))$ to \eqref{eq:McKean-Vlasov} (which exists due to Theorem \ref{thm:sznitman}) is a weak-mild solution \eqref{d:weakMildSol}.
	
	Indeed, let $\mu = (\mu_t)_{t \in [0,T]} \in C([0,T],\cP(\R^d))$ be a weak solution to \eqref{eq:McKean-Vlasov}. As done in Lemma \ref{mild_formulation}, one can show that for every $f \in C^\infty_0$ and $t \in [0,T]$ 
	\begin{equation}
	\label{eq:weak-weak-mild}
	\langle \mu_t , f \rangle_{-m,m} = \langle \mu_0 , S_tf \rangle_{-m,m} + \int^t_0 \langle \mu_s , (\nabla S_{t-s} f) \cdot (\Gamma * \mu_s) \rangle_{-m,m} \dd s
	\end{equation}
	holds. Note that by standard approximation, \eqref{eq:weak-weak-mild} holds also for $f\in H^m \subset C^3_b$, meaning that $\mu$ is indeed a weak-mild solution. By the uniqueness statement of Proposition \ref{prop:uniqueness} we conclude  $\mu = \nu$ and thus in particular $\nu \in C([0,T],\cP(\R^d))$. This concludes the second part and thus the entire proof of the Theorem.

	\appendix
	
	\section{Hilbert spaces and Semigroups}
	\label{A:semigroup}
	
	\subsubsection{The Laplacian semigroup}\	
	The following definitions are taken from \cite{cf:Hen,cf:Lun}. For the sake of notation, we focus on $\Delta$, the standard Laplacian on $L^2(\R^d)$,  instead of $\frac \Delta 2$. We can consider the part of $\Delta$ on (the complexification e.g. \cite[Appendix A]{cf:Lun} of) $H^m$:
	\begin{equation*}
	\Delta : \cD(\Delta) \subset H^m \longrightarrow H^m.
	\end{equation*}
	It is not difficult to see that $H^{m+2} \subset \cD(\Delta)$, where the inclusion is dense, and that $\Delta$ is a sectorial operator with spectrum given by $(-\infty,0]$. In particular, it generates an analytic strongly continuous semigroup denoted for all $t\geq0$ by $S_t$; recall that $S_0 := \text{Id}$ is the identity operator.
	
	We represent $S$ for $t \in [0,T]$ as the following Dunford integral
	\begin{equation*}
	S_t = \frac 1{2 \pi i} \int_{\gamma_{r,\eta}} e^{t\lambda} R(\lambda,\Delta) \dd \lambda,
	\end{equation*}
	where $R(\lambda, \Delta)=(\lambda \text{Id} - \Delta)^{-1}$ denotes the resolvent of $\Delta$ and where, for $r>0$ and $\eta \in (\pi/2, \pi)$, $\gamma_{r,\eta}$ is the curve $\{\lambda \in \bbC \, : \, \left|\arg\lambda\right| = \eta, \left|\lambda\right|\geq r\}\cup \{ \lambda \in \bbC \, : \, \left|\arg \lambda\right|\leq \eta, \left|\lambda\right|=r \}$, oriented counterclockwise.
	
	Observe that $\gamma_{r,\eta}$ is contained in the resolvent set of $\Delta$, i.e. $\gamma_{r,\eta} \subset \rho(\Delta)$, and that, for all regular values $\lambda \in \rho(\Delta)$, $R(\lambda,\Delta)$ is a bounded linear operator on $H^m$.
	
	\medskip
	
	When computing the semigroup against a function $h$ through \eqref{d:dunInt}, we use the following decomposition into three real integrals:
	\begin{equation}
	\label{d:decDun}
	\begin{split}
	S_t h \, &= \,   \frac 1{2 \pi i} \int_{\gamma_{r,\eta}} e^{t\lambda} R(\lambda,\Delta) h \dd \lambda = \frac 1{2 \pi i} \Bigg[  \int_{r}^\infty e^{t\rho e^{i\eta}} R(\rho e^{i\eta},\Delta) e^{i\eta} \dd \rho + \\
	& + \int_{-\eta}^{\eta} e^{tr(\cos\alpha + i \sin\alpha)} R(re^{i\alpha},\Delta) i r e^{i\alpha} \dd \alpha -  \int_{r}^\infty e^{t\rho e^{-i\eta}} R(\rho e^{-i\eta},\Delta) e^{-i\eta} \dd \rho \Bigg].
	\end{split}
	\end{equation}
	
	\medskip
	
	%		{\purple
	%		
	%		Since we are working on Hilbert spaces, one can define the dual operator $\Delta^*$ on $H^{-m}$ in a standard way (e.g. \cite[Chapter 3]{cf:Sta}), and obtain that its associated semigroup is the dual of $S_t$. We use $\Delta$ for $\Delta^*$ whenever there is no confusion, but observe that when using the fractional Laplacian $(-\Delta)^\epsilon$ for $\epsilon\in(0,1)$, one needs to be precise: in this case ${(- \Delta)^\epsilon}^* = (- \Delta^*)^{-\epsilon}$, see next subsections.
	%	}
	
	The section ends with some estimates concerning the regularity of $S$.
	
	\begin{lemma}
		\label{lem:semigroup_bounds}
		Assume $m > d/2 + 3$. Let $(S_t)_{t\leq T}$ be the heat semigroup acting on $H^m$. For $f\in H^m$, it holds that
		\begin{equation*}
		\begin{split}
		&\norm{\nabla(S_t-\mbox{Id})f}_\infty\leq \, \sqrt{t} \norm{D^2 f}_{\infty} \leq  C \, \sqrt{t} \norm{f}_{m}, \\
		& \norm{\nabla(S_t-\mbox{Id})f}_\infty\leq \frac{1}{2} \,t \norm{D^3 f}_{\infty} \leq \frac C{2} \,t \norm{f}_{m},
		\end{split}
		\end{equation*}
		where $D^2$ is the Hessian and $D^3$ the tensor with third-order derivatives. In particular
		\begin{equation*}
		\begin{split}
		& \norm{\nabla(S_t-S_s)f}_\infty\leq \, \sqrt{|t-s|} \norm{D^2 f}_\infty \leq  C \, \sqrt{|t-s|} \norm{f}_m , \\
		& 	\norm{\nabla(S_t-S_s)f}_\infty\leq \frac{1}{2} \, |t-s| \norm{D^3 f}_\infty \leq \frac C{2} \, |t-s| \norm{f}_m.
		\end{split}
		\end{equation*}
	\end{lemma}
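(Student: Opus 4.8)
The plan is to exploit the explicit Gaussian representation of the heat semigroup together with a Taylor expansion. First I would record that, since $m>d/2+3$, applying \eqref{d:sobIn} to the partial derivatives of $f$ yields the embedding $H^m\subset C^3_b(\R^d)$ together with $\norm{D^k f}_\infty\leq C\norm{f}_m$ for $k=0,1,2,3$; this both makes every $f\in H^m$ regular enough for the computations below and produces the second inequality in each displayed line once the first is established. Next I would use the heat-kernel representation $S_t f(x)=\mathbb{E}[f(x+B_t)]$, where $B_t$ is a centered Gaussian vector satisfying $\mathbb{E}[B_t]=0$ and $\mathbb{E}|B_t|^2\leq Ct$, and differentiate under the expectation — legitimate because $f\in C^3_b$ and the Gaussian law has finite moments of all orders, so that $\nabla S_t f=S_t(\nabla f)$ — to obtain
\[
\nabla(S_t-\mbox{Id})f(x)=\mathbb{E}\bigl[\nabla f(x+B_t)-\nabla f(x)\bigr].
\]

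For the $\sqrt{t}$ bound I would write $\nabla f(x+B_t)-\nabla f(x)=\int_0^1 D^2 f(x+\theta B_t)\,B_t\,\dd\theta$ by the fundamental theorem of calculus, take absolute values and expectations, and bound $\mathbb{E}|B_t|\leq(\mathbb{E}|B_t|^2)^{1/2}$ by Jensen's inequality, giving $\norm{\nabla(S_t-\mbox{Id})f}_\infty\leq C\sqrt{t}\,\norm{D^2 f}_\infty$. For the $t$ bound I would instead Taylor-expand $\nabla f$ to second order around $x$: the term linear in $B_t$ equals $D^2 f(x)B_t$, whose expectation vanishes because $\mathbb{E}[B_t]=0$, so only the quadratic Taylor remainder survives, and the estimate
\[
\bigl|\nabla f(x+B_t)-\nabla f(x)-D^2 f(x)B_t\bigr|\leq \tfrac12\norm{D^3 f}_\infty\,|B_t|^2
\]
then yields $\norm{\nabla(S_t-\mbox{Id})f}_\infty\leq C\,t\,\norm{D^3 f}_\infty$. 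Combining with the Sobolev bounds $\norm{D^2 f}_\infty,\norm{D^3 f}_\infty\leq C\norm{f}_m$ completes the first display.

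For the increments I would assume $s\leq t$ without loss of generality and use the semigroup property $S_t-S_s=(S_{t-s}-\mbox{Id})S_s$, so that $\nabla(S_t-S_s)f=\nabla(S_{t-s}-\mbox{Id})(S_s f)$; applying the estimates just proved to $g:=S_s f$, and using that the convolution operator $S_s$ commutes with $D^k$ and is an $L^\infty$-contraction (its kernel has unit mass), hence $\norm{D^k S_s f}_\infty\leq\norm{D^k f}_\infty$ for $k=2,3$, produces the claimed $|t-s|^{1/2}$ and $|t-s|$ bounds, and the Sobolev embedding again upgrades $\norm{D^k f}_\infty$ to $\norm{f}_m$.

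I do not anticipate a genuine obstacle here: the only points deserving care are the interchange of differentiation with the expectation (equivalently, differentiation under the convolution integral) and the commutation $D^k S_s=S_s D^k$, both of which follow from $f\in C^3_b$ — guaranteed by $m>d/2+3$ — and the smoothness and rapid decay of the Gaussian kernel. The mild bookkeeping of dimension-dependent constants is harmless, since they are absorbed into the universal constant $C$ of the Sobolev-norm bounds.
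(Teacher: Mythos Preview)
Your proposal is correct and follows essentially the same approach as the paper: both use the explicit Gaussian representation of $S_t$ (you phrase it probabilistically as $\mathbb{E}[f(x+B_t)]$, the paper writes the convolution integral), Taylor-expand $\nabla f$, exploit that the first-order term has zero mean against the Gaussian, and then invoke the Sobolev embedding $H^m\subset C^3_b$. You additionally spell out the increment step via $S_t-S_s=(S_{t-s}-\mathrm{Id})S_s$ and the $L^\infty$-contractivity of $S_s$, which the paper only records with the words ``In particular''.
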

	
	\begin{proof}
		We calculate explicitly
		\begin{equation*}
		\begin{split}
		|\nabla (S_t-\mbox{Id})& f(x) |= \left|\int_{\R^d}\frac{1}{(2\pi t)^{d/2}}e^{-\frac{|y|^2}{2t}}(\nabla f(x-y)-\nabla f(x))\dd y\right|\\
		= &\left|\int_{\R^d}\frac{1}{(2\pi t)^{d/2}}e^{-\frac{|y|^2}{2t}}\left(\nabla f(x-y)- \nabla f(x) + (-y)^T(D \nabla f)(x)\right)\dd y\right|\\
		\leq& \frac{1}{2}\norm{D^3 f}_\infty \int_{\R^d}\frac{1}{(2\pi t)^{d/2}}e^{-\frac{|y|^2}{2t}}|y|^2\dd y\\
		=& \frac{1}{2}\norm{D^3 f}_\infty \, t
		\end{split}
		\end{equation*}
		where we exploited the asymmetry of the first Taylor component. The first statement follows from a similar consideration, considering an order one Taylor expansion of $\nabla f(x-y)$ around $x$ instead of an order two Taylor expansion. The proof follows by Sobolev's embeddings.
	\end{proof}
	
	\medskip
	
	\subsubsection{The Hilbert space $H^s$}
	It is useful to give an explicit definition of $H^m$ through the Fourier transform (e.g. \cite[7.62]{cf:AdaFou}). Let $s>0$, define $\left(H^s , \norm{\cdot}_s \right)$ by
	\begin{equation}
	\label{d:H^m}
	\begin{split}
	&H^s = \left\{ u \in L^2(\R^d) \, : \, \int_{\R^d} (1+ |\xi|^{2})^s \left| \cF(u)(\xi) \right|^2 \dd \xi < \infty \right\}, \\
	& \norm{u}^2_s = \int_{\R^d} (1+ |\xi|^{2})^s \left| \cF(u)(\xi) \right|^2 \dd \xi.
	\end{split}
	\end{equation}
	Whenever $s$ is an integer, it is well known that this definition coincides with the standard definition of the Sobolev space $W^{s,2} (\R^d)$.
	%	If $s$ is not an integer, definition \eqref{d:H^m} still defines a (fractional) Hilbert space; namely, it is equivalent to other characterizations of intermediate Banach spaces as Besov spaces $B^{s;p,q}$ for $p=2=q$, or fractional Sobolev spaces defined by means of the Gagliardo semi-norm $[u]_{H^s}$ (e.g \cite[Propositions 3.4 and 3.6]{cf:hitch}).
	
	\medskip
	
	%	Observe that for negative Hilbert spaces, the definition slightly changes because $L^2$ is no longer included in $H^{-s}$. Thus, one usually considers
	%	\begin{equation*}
	%	%	\label{d:H^-m}
	%	H^{-s} := \left\{ u \in \mathcal{S}^*(\R^d) \, : \, \int_{\R^d} (1+ |\xi|^{2})^{-s} \left| \cF(u)(\xi) \right|^2 \dd \xi < \infty \right\}, \quad \text{ for } s>0,
	%	\end{equation*}
	%	where $\mathcal{S}$ is the space of Schwartz functions and $\mathcal{S}^*$ its dual consisting in tempered distributions.
	%	
	%	\medskip
	
	The next lemma extends the embedding $\eqref{d:sobIn}$ to $H^s$ and its relationship with the space of probability measures.
	\begin{lemma}
		\label{lem:embPro}
		For all $s>d/2$, one has the following continuous embedding
		\begin{equation}
		\label{a:embBes}
		H^s \subset C_b(\R^d).
		\end{equation}
		Moreover, there exists $C>0$ (depending on s only) such that
		\begin{equation}
		\label{a:embPro}
		\sup_{\mu \in \cP(\R^d)} \norm{\mu}_{-s} \leq C.
		\end{equation}
	\end{lemma}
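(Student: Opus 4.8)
The plan is to establish the two assertions separately, both via the Fourier-analytic description of $H^s$ in \eqref{d:H^m}. For the embedding \eqref{a:embBes}, I would argue as follows. Given $u \in H^s$ with $s > d/2$, write the Fourier inversion formula $u(x) = c_d \int_{\R^d} e^{i x \cdot \xi} \cF(u)(\xi) \dd \xi$ and estimate pointwise using Cauchy--Schwarz:
\[
|u(x)| \leq c_d \int_{\R^d} |\cF(u)(\xi)| \dd \xi = c_d \int_{\R^d} (1+|\xi|^2)^{s/2} |\cF(u)(\xi)| \cdot (1+|\xi|^2)^{-s/2} \dd \xi \leq c_d \norm{u}_s \left( \int_{\R^d} (1+|\xi|^2)^{-s} \dd \xi \right)^{1/2}.
\]
The last integral converges precisely because $2s > d$, giving $\norm{u}_{L^\infty} \leq C \norm{u}_s$ with $C = C(s,d)$; continuity of $u$ follows since $\cF(u) \in L^1$ and one applies dominated convergence (or Riemann--Lebesgue together with the inversion formula), so $u$ has a continuous bounded representative. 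This yields \eqref{a:embBes}.

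For the uniform bound \eqref{a:embPro}, the key observation is that the embedding constant in \eqref{a:embBes} translates, by duality, into a bound on $\norm{\mu}_{-s}$ that does not depend on $\mu$. Indeed, fix $\mu \in \cP(\R^d) \subset H^{-s}$. For any $h \in H^s$ with $\norm{h}_s \leq 1$,
\[
|\langle \mu, h \rangle| = \left| \int_{\R^d} h(x) \, \mu(\dd x) \right| \leq \norm{h}_{L^\infty} \, \mu(\R^d) = \norm{h}_{L^\infty} \leq C \norm{h}_s \leq C,
\]
using that $\mu$ is a probability measure in the middle equality and the embedding constant $C$ from the previous paragraph in the penultimate step. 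Taking the supremum over such $h$ gives $\norm{\mu}_{-s} \leq C$, and since $C$ is independent of $\mu$, taking the supremum over $\mu \in \cP(\R^d)$ preserves the bound. This establishes \eqref{a:embPro} with the same constant $C = C(s,d)$ as in the embedding.

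I do not expect any genuine obstacle here: the statement is a standard Sobolev embedding argument plus a one-line duality observation, and the constant from the Fourier-side Cauchy--Schwarz estimate is exactly what makes both parts work with a single constant. The only point requiring a little care is the justification that an $H^s$ function with $s>d/2$ has a \emph{continuous} (not merely essentially bounded) representative; this is handled by noting $\cF(u) \in L^1(\R^d)$, which is what the same Cauchy--Schwarz computation shows, so that the Fourier inversion integral defines a continuous function agreeing with $u$ almost everywhere. If one prefers, the continuity can alternatively be cited directly from \cite[Theorem 4.12]{cf:AdaFou} together with the interpolation/monotonicity of the spaces $H^s$ in $s$.
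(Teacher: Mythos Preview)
Your proof is correct and, for the uniform bound \eqref{a:embPro}, identical in substance to the paper's: both use $|\langle \mu, h\rangle| \leq \norm{h}_\infty \leq C\norm{h}_s$ and take the supremum over $\norm{h}_s\leq 1$. For the embedding \eqref{a:embBes} the paper simply cites the Besov embedding theorem from Adams--Fournier, whereas you give the direct Fourier-side Cauchy--Schwarz argument showing $\cF(u)\in L^1$; your route is more self-contained and makes the embedding constant explicit, while the paper's citation is shorter but black-boxes the result.
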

	\begin{proof}
		The continuous embedding \eqref{a:embBes} is a consequence of the embedding of Besov spaces into the space of continuous bounded functions (e.g. \cite[Theorem 7.34]{cf:AdaFou}) and the fact that they coincide with $H_s$ for a particular choice of the indices.
		
		\medskip		
		
		Turning to \eqref{a:embPro}, let $\mu \in \cP(\R^d)$, then
		\begin{equation*}
		\norm{\mu}_{-s} = \sup_{h\in H^s}\frac{ \langle \mu, h \rangle_{-s,s}}{\norm{h}_s} = \sup_{h\in H^s} \frac{ \langle \mu, h \rangle}{\norm{h}_s}  \leq \sup_{h\in H^s} \frac{\norm{h}_\infty}{\norm{h}_s} \leq C,
		\end{equation*}
		where $C$ is the norm of the identity operator between $H^s$ and $C_b (\R^d)$.
	\end{proof}
	
	\medskip
	
	\subsubsection{Fractional operators on $H^s$}
	We have the following lemma.
	
	\begin{lemma}
		\label{lem:resEst}
		Let $\lambda = \rho e^{i\eta} \in \rho(\Delta)$ and suppose $\rho>1$. There exists a positive constant $C=C_\eta$ such that for every $\epsilon \in (0,1/2)$
		\begin{equation}
		\norm{\nabla R(\rho e^{i\eta},\Delta) h}^2_{m-2\epsilon} \leq \,C_\eta \, \frac{\norm{h}^2_m}{\rho^{1+2\epsilon}}, \quad  h \in H^m.
		\end{equation}
	\end{lemma}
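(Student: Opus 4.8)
The plan is to pass to the Fourier side, where both $\Delta$ and $\nabla$ become multiplication operators, and reduce the whole estimate to an elementary pointwise bound on Fourier multipliers. Recall from \eqref{d:H^m} that $\norm{u}_s^2=\int_{\R^d}(1+|\xi|^2)^s|\cF(u)(\xi)|^2\dd\xi$, that $\cF(\Delta u)(\xi)=-|\xi|^2\cF(u)(\xi)$, and that $\cF(\nabla u)(\xi)=i\xi\,\cF(u)(\xi)$, so $|\cF(\nabla u)(\xi)|^2=|\xi|^2|\cF(u)(\xi)|^2$. Since $\lambda=\rho e^{i\eta}$ with $\eta\in(\pi/2,\pi)$ lies in $\rho(\Delta)=\bbC\setminus(-\infty,0]$, the function $\xi\mapsto(\lambda+|\xi|^2)^{-1}$ is bounded and $R(\lambda,\Delta)$ is exactly the associated Fourier multiplier. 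Hence
\begin{equation*}
\norm{\nabla R(\rho e^{i\eta},\Delta)h}_{m-2\epsilon}^2=\int_{\R^d}(1+|\xi|^2)^{m-2\epsilon}\,\frac{|\xi|^2}{|\rho e^{i\eta}+|\xi|^2|^2}\,|\cF(h)(\xi)|^2\dd\xi,
\end{equation*}
and it suffices to dominate the multiplier $(1+|\xi|^2)^{m-2\epsilon}\frac{|\xi|^2}{|\rho e^{i\eta}+|\xi|^2|^2}$ pointwise by $C_\eta\,\rho^{-1-2\epsilon}(1+|\xi|^2)^m$.

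The first ingredient I would establish is the sectoriality bound: for every $r\geq0$,
\begin{equation*}
|\rho e^{i\eta}+r|^2=\rho^2+2\rho r\cos\eta+r^2\geq c_\eta^2\,(\rho+r)^2,\qquad c_\eta^2:=\frac{1+\cos\eta}{2}>0.
\end{equation*}
Writing $t=r/\rho$, this is the claim $g(t):=\frac{1+2t\cos\eta+t^2}{(1+t)^2}\geq c_\eta^2$ on $[0,\infty)$; one computes $g'(t)=\frac{2(1-\cos\eta)(t-1)}{(1+t)^3}$, so $g$ decreases on $[0,1]$ and increases on $[1,\infty)$, attaining its minimum $g(1)=\frac{1+\cos\eta}{2}$. (That $c_\eta>0$ uses precisely $\eta<\pi$.)

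The second ingredient is the purely elementary estimate, valid for $r\geq0$ and $0<\epsilon<1/2$ (and any $\rho>0$):
\begin{equation*}
\frac{r}{(\rho+r)^2}=\frac{r^{2\epsilon}\,r^{1-2\epsilon}}{(\rho+r)^2}\leq\frac{r^{2\epsilon}\,(\rho+r)^{1-2\epsilon}}{(\rho+r)^2}=\frac{r^{2\epsilon}}{(\rho+r)^{1+2\epsilon}}\leq\frac{(1+r)^{2\epsilon}}{\rho^{1+2\epsilon}},
\end{equation*}
using $r\leq\rho+r$ in the first inequality, and $(\rho+r)^{1+2\epsilon}\geq\rho^{1+2\epsilon}$ together with $r^{2\epsilon}\leq(1+r)^{2\epsilon}$ in the last. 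Combining the two displays with $r=|\xi|^2$ gives $\frac{|\xi|^2}{|\rho e^{i\eta}+|\xi|^2|^2}\leq c_\eta^{-2}\rho^{-1-2\epsilon}(1+|\xi|^2)^{2\epsilon}$, so the multiplier is bounded by $c_\eta^{-2}\rho^{-1-2\epsilon}(1+|\xi|^2)^m$; inserting this into the integral representation and recognizing $\int_{\R^d}(1+|\xi|^2)^m|\cF(h)(\xi)|^2\dd\xi=\norm{h}_m^2$ concludes the proof with $C_\eta=c_\eta^{-2}=\frac{2}{1+\cos\eta}$, independent of $\rho$ and $\epsilon$ as required.

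I do not anticipate a genuine obstacle; the two subtle points are merely (i) justifying the Fourier-multiplier representation of $R(\lambda,\Delta)$, which is immediate once one notes $\lambda\notin(-\infty,0]$, and (ii) balancing exponents in the elementary inequality so that exactly the factor $\rho^{-1-2\epsilon}$ appears — the tempting cruder bound $\frac{r}{(\rho+r)^2}\leq\frac1\rho$ replaces the harmless factor $(1+|\xi|^2)^{2\epsilon}$ by the unbounded $\rho^{2\epsilon}$ and must be avoided. The hypothesis $\rho>1$ is not actually needed for the estimate itself; it is harmless and consistent with the choice of the contour $\gamma_{r,\eta}$ in \eqref{d:dunInt}.
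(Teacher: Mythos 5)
Your proof is correct and follows the same strategy as the paper's: pass to the Fourier side, reduce the claim to a pointwise bound on the multiplier $(1+|\xi|^2)^{m-2\epsilon}|\xi|^2/|\rho e^{i\eta}+|\xi|^2|^2$ using the sectoriality estimate $|\rho e^{i\eta}+r|\geq c_\eta(\rho+r)$, and balance exponents to extract the factor $\rho^{-1-2\epsilon}$. Your version is slightly more self-contained — you evaluate $C_\eta=\tfrac{2}{1+\cos\eta}$ explicitly where the paper leaves the supremum $\sup_{x\geq 0}(1+x)/|e^{i\eta}+x|$ uncomputed, and you correctly note that the hypothesis $\rho>1$ is dispensable — but these are refinements of the same argument, not a different route.
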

	
	\begin{proof}
		Exploiting the Fourier multipliers associated to $\nabla$ and $R$, one obtains
		\begin{equation*}
		\begin{split}
		& \norm{\nabla R(\rho e^{i\eta},\Delta) h}^2_{m-2\epsilon} = \int_{\R^d} (1+ |\xi|^2)^{m-2\epsilon}  \left| \cF(\nabla R(\rho e^{i\eta},\Delta) h) (\xi)\right|^2 \dd \xi \leq \\
		&\leq \int_{\R^d} (1 + |\xi|^2)^{m-2\epsilon} \left| \cF (h)(\xi)\right|^2 \left| \frac{\xi}{\rho e^{i\eta} + |\xi|^2}\right|^2 \dd \xi \leq \\
		& \leq \int_{\R^d} (1 + |\xi|^2)^m \left| \cF (h)(\xi)\right|^2 \frac{|\xi|^2}{\left|\rho e^{i\eta} + |\xi|^2 \right|^2} \frac 1{(1+ |\xi|^2)^{2\epsilon}} \dd \xi.
		\end{split}
		\end{equation*}
		Since we are assuming $\rho > 1$, we have that
		\begin{equation*}
		\frac{|\xi|^2}{\left|\rho e^{i\eta} + |\xi|^2 \right|^2} \frac 1{(1+ |\xi|^2)^{2\epsilon}} \leq \frac{(\rho + |\xi|^2)^{1-2\epsilon}}{\left|\rho e^{i\eta} + |\xi|^2 \right|^2} \leq C_\eta \frac 1{\left(\rho + |\xi|^2 \right)^{1+2\epsilon}} \leq C_\eta \frac 1{\rho^{1+2\epsilon}},
		\end{equation*}
		with $C_\eta = (\sup_{x\geq 0} (1+x)/ |e^{i\eta} + x |)^2< \infty$ since $\eta \neq \pi$.
	\end{proof}
	\medskip

	\section{Rough integration associated to semigroup functionals}
	\label{A:integration}
	We mostly follow \cite{gubinelli2010} and use very similar notations. Let $k\geq1$ and $\Delta_k$ be the $k$-dimensional simplex given by
	\begin{equation*}
	\Delta_k = \left\{ t_1,\dots,t_k \in [0,T] \, : \, T\geq t_1 \geq t_2 \geq \dots \geq t_k \geq 0 \right\}.
	\end{equation*}
	Let $C_k=C(\Delta_k;\bbR)$ and $W$ a Banach space with a strongly continuous semigroup $(S_t)_{t \in [0,T]}$ acting on it. Define $D_k$ as the space of linear operators from $W$ to $C_k$. Furthermore, let $D_* = \bigcup_{k\geq 1} D_k$ and define the following operators on $D_*$:
	\begin{equation*}
	\delta : D_k \to D_{k+1},\quad \phi : D_k \to D_{k+1}, \qquad k\geq 1.
	\end{equation*}
	For $A \in D_k$ and $f \in W$, they are defined as
	\begin{equation*}
	\begin{split}
	& \left[\delta A f\right]_{t_1 \dots t_{k+1}} = \sum_{i=1}^{k+1} (-1)^{i+1} \left[A f \right]_{t_1\dots \cancel{t_i}\dots t_{k+1}}, \\
	& \left[\phi A f \right]_{t_1 \dots t_{k+1}} = \left[A (S_{t_1t_2}- \text{Id})f\right]_{t_2\dots t_{k+1}},
	\end{split}
	\end{equation*}
	where $\cancel{t_i}$ means that the argument $t_i$ is omitted and $S_{t_1 t_2}$ stands for $S_{t_1 - t_2}$.
	
	We are ready for the first lemma.
	
	\begin{lemma}
		\label{lem:cochain}
		Let $\hat{\delta} := \delta - \phi$. Then $(D_*, \hat{\delta})$ is an acyclic cochain complex.  In particular
		\begin{equation*}
		\emph{Ker} \, \restriction{\hat{\delta}}{D_{k+1}} = \emph{Im} \, \restriction{\hat{\delta}}{D_k}, \quad \text{for any }k \geq 1.
		\end{equation*}
	\end{lemma}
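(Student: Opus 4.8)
\emph{Plan.} The statement has two parts: that $\hat\delta\circ\hat\delta=0$, so that $(D_*,\hat\delta)$ is a cochain complex, and that this complex is acyclic in the indicated degrees. I would establish the first by a direct algebraic computation that reduces to the classical identity $\delta\circ\delta=0$ of \cite{gubi} together with a semigroup identity accounting for the twist $\phi$, in the spirit of \cite{gubinelli2010}; and the second by exhibiting an explicit contraction, exactly as in the untwisted case. One first checks that $\hat\delta$ really maps $D_k$ into $D_{k+1}$: linearity in $f\in W$ is clear since $A$ is linear and $S_{t_1t_2}-\text{Id}$ is bounded on $W$, and joint continuity of $(t_1,\dots,t_{k+1})\mapsto[\hat\delta Af]_{t_1\dots t_{k+1}}$ on $\Delta_{k+1}$ follows from continuity of $[Af]_\cdot$ and strong continuity of $(S_t)_{t\in[0,T]}$, which makes $(t_1,t_2)\mapsto(S_{t_1-t_2}-\text{Id})f$ continuous into $W$.

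\emph{Step 1: $\hat\delta\hat\delta=0$.} Expanding, $\hat\delta\hat\delta=\delta\delta+(\text{terms involving }\phi)$; the purely simplicial part $\delta\delta$ vanishes by the usual identity — deleting the $i$-th and then the $j$-th simplex coordinate agrees up to sign with deleting them in the reverse order — and since $\delta$ here acts only on the simplex variables, never on $W$, the argument of \cite{gubi} applies verbatim. It then suffices to show that the $\phi$-terms $\delta\phi$, $\phi\delta$ and $\phi\phi$ cancel. I would expand $[(\delta\phi+\phi\delta)Af]_{t_1\dots t_{k+2}}$ term by term: the face maps that avoid the two distinguished slots $t_1$ and $t_2$ — the slot on which $\phi$ has acted and the one it is about to act on — cancel pairwise between $\delta\phi$ and $\phi\delta$, again by the simplicial cancellation; what survives is a handful of terms, all evaluated at $(t_3,\dots,t_{k+2})$ and of the form $[A(S_{t_1-t_2}-\text{Id})f]$, $[A(S_{t_2-t_3}-\text{Id})f]$, $[A(S_{t_1-t_3}-\text{Id})f]$. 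These cancel against $[\phi\phi Af]_{t_1\dots t_{k+2}}=[A(S_{t_2-t_3}-\text{Id})(S_{t_1-t_2}-\text{Id})f]_{t_3\dots t_{k+2}}$ precisely by virtue of the elementary identity
\[
(S_a-\text{Id})(S_b-\text{Id})=(S_{a+b}-\text{Id})-(S_a-\text{Id})-(S_b-\text{Id}),\qquad a,b\geq0,
\]
used with $a=t_2-t_3$ and $b=t_1-t_2$ (which relies only on $S_aS_b=S_{a+b}$).

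\emph{Step 2: acyclicity.} Fix $k\geq1$ and let $B\in D_{k+1}$ with $\hat\delta B=0$. Define $A\in D_k$ by appending the minimal admissible time $0$,
\[
[Af]_{t_1\dots t_k}:=(-1)^k\,[Bf]_{t_1\dots t_k\,0},\qquad (t_1,\dots,t_k)\in\Delta_k,\ f\in W,
\]
which is again an element of $D_k$ (continuity passes from $[Bf]_\cdot$ on $\Delta_{k+1}$). A short computation from the definitions then yields
\[
[\hat\delta Af]_{t_1\dots t_{k+1}}=(-1)^k\,[\hat\delta Bf]_{t_1\dots t_{k+1}\,0}+[Bf]_{t_1\dots t_{k+1}}=[Bf]_{t_1\dots t_{k+1}},
\]
the last step using $\hat\delta B=0$. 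The two mechanisms at play are: in $\delta A$, the face map deleting the appended $0$ reproduces the desired $B$-term while the remaining ones reassemble $\delta B$; and $\phi$ simply passes through the appended $0$, since it only modifies the largest time, not the smallest. Hence $B=\hat\delta A\in\mathrm{Im}\,\hat\delta|_{D_k}$, so $\mathrm{Ker}\,\hat\delta|_{D_{k+1}}\subseteq\mathrm{Im}\,\hat\delta|_{D_k}$; the reverse inclusion is immediate from Step 1, which completes the proof.

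\emph{Main obstacle.} The only genuinely delicate point is the bookkeeping in Step 1: one must keep precise track of which face maps in $\delta\phi$ and $\phi\delta$ land on the first (twisted) slot, as exactly those do not cancel among themselves and, after invoking the semigroup identity above, are what cancels the quadratic term $\phi\phi$; the signs must of course be matched to the convention fixed in the definition of $\hat\delta$. Everything else — the well-posedness check, $\delta\delta=0$, and the contraction of Step 2 — is formal and mirrors the untwisted theory of \cite{gubi}.
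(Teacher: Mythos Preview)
Your proposal is correct and follows essentially the same approach as the paper. The paper's own proof is extremely terse: it simply says the argument ``mimics \cite[Proposition 3.1]{gubinelli2010}'' and then records exactly the same explicit contraction you give in Step~2, namely the preimage obtained by appending the bottom time $0$ (with the appropriate sign). Your Step~1 spells out in more detail what the paper delegates to \cite{gubinelli2010}, via the decomposition $\hat\delta\hat\delta=\delta\delta-(\delta\phi+\phi\delta)+\phi\phi$ and the semigroup identity $(S_a-\text{Id})(S_b-\text{Id})=(S_{a+b}-\text{Id})-(S_a-\text{Id})-(S_b-\text{Id})$; this is precisely the mechanism behind the cited result. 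One small caution: the sign conventions for $\delta$ in the paper are slightly delicate (and there is a sign discrepancy between your $(-1)^k$ and the paper's $(-1)^{k+1}$ in the contraction formula), so when you carry out the bookkeeping in Step~1 you should fix once and for all whether $[\delta qf]_{ts}=[qf]_t-[qf]_s$ or the opposite, and check consistency throughout; you already flag this as the main obstacle.
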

	
	\begin{proof}
		The proof mimics \cite[Proposition 3.1]{gubinelli2010}. We only mention that for proving $\text{Ker} \, \restriction{\hat{\delta}}{D_{k+1}} \subset \text{Im} \, \restriction{\hat{\delta}}{D_k}$, a possible choice for $A \in \text{Ker} \, \restriction{\hat{\delta}}{D_{k+1}}$ is given by $B \in D_k$ defined as
		\begin{equation*}
		\left[B f\right]_{t_1\dots t_k} = (-1)^{k+1} \left[Af\right]_{t_1 \dots t_k 0}, \quad f \in W.
		\end{equation*}
	\end{proof}
	
	We now introduce some analytical assumptions on the previous function spaces. We start with a Hölder-like norm on $C_k$ for $k=2,3$. For $\mu > 0$ and $g \in C_2$, define
	\begin{equation*}
	\norm{g}_\mu := \sup_{t,s \in \Delta_2} \frac{\left|g_{ts}\right|}{\left|t-s\right|^\mu},
	\end{equation*}	
	and consequently
	\begin{equation*}
	C^\mu_2 := \left\{ g\in C_2 \, ; \, \norm{g}_\mu < \infty \right\}.
	\end{equation*}
	For $\gamma, \rho >0$ and $g \in C_3$, define
	\begin{equation*}
	\norm{g}_{\gamma,\rho}	:= \sup_{t,u,s \in \Delta_3} \frac{\left|g_{tus}\right|}{\left|t-u\right|^\mu \left|u-s\right|^\rho}
	\end{equation*}
	and
	\begin{equation*}
	\norm{g}_\mu := \inf \left\{ \sum_i \norm{g_i}_{\rho_i, \mu -\rho_i} \, ; \, g =\sum_i g_i \, , \, 0 <\rho_i <\mu \right\},
	\end{equation*}
	where the infimum is taken on all sequences $(g_i)\subset C_3$ such that $g=\sum_i g_i$ and $\rho_i \in (0,\mu)$. Again, $\norm{\cdot}_\mu$ defines a norm on $C_3$ and we denote the induced subspace by
	\begin{equation*}
	C^\mu_3 := \left\{ g\in C_3 \, ; \, \norm{g}_\mu < \infty \right\}.
	\end{equation*}
	
	With these definitions in mind, let
	\begin{equation*}
	D^\mu_k := L(W, C^\mu_k), \quad D^{1+}_k := \bigcup_{\mu >1} D^\mu_k, \qquad k=2,3.
	\end{equation*}
	The space $L(W, C^\mu_k)$ is the space of linear bounded operators from $W$ to $C^\mu_k$ equipped with its corresponding operator norm, i.e.
	\begin{equation*}
	\norm{A}_{D^\mu_k} := \; \sup_{\norm{f}_W \leq 1} \; \norm{Af}_\mu, \quad f \in D^\mu_k.
	\end{equation*}
	
	The main tool for constructing the pathwise integral associated to semigroup functionals is given by the next lemma and the following corollary. We use the notation $\hat{\delta} (D_k) := \text{Im} \, \restriction{\hat{\delta}}{D_k}$ for $k\geq 1$.
	
	\medskip
	
	\begin{lemma}[Sewing]
		\label{lem:sewing}
		There exists a unique linear operator
		\begin{equation*}
		\Lambda : D^{1+}_3 \cap \, \hat{\delta} (D_2) \to D^{1+}_2
		\end{equation*}
		such that
		\begin{equation*}
		\hat{\delta} \Lambda = \restriction{\emph{Id}}{D^{1+}_3 \cap \, \hat{\delta} (D_2)}.
		\end{equation*}
		Moreover, if $\eta>1$ then $\Lambda$ is a continuous operator from $D^\eta_3 \cap \, \hat{\delta} (D_2)$ to $D^\eta_2$, i.e. there exists a constant $C=C_\eta>0$ such that
		\begin{equation}
		\label{sew:estLambda}
		\norm{\Lambda A}_\eta \leq C_\eta \norm{A}_\eta, \quad A \in D^\eta_3 \cap \, \hat{\delta} (D_2).
		\end{equation}
	\end{lemma}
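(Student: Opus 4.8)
The plan is to transpose Gubinelli's construction of the sewing map (\cite{gubi}, \cite[\S 3]{gubinelli2010}) to the twisted complex $(D_*,\hat\delta)$ of Lemma \ref{lem:cochain}, the role of the trivial bound $\norm{\mathrm{Id}}=1$ in the classical sewing lemma being played by $M:=\sup_{t\in[0,T]}\sup_{\norm f_W\le1}\norm{S_tf}_W<\infty$, which is finite since $(S_t)_{t\in[0,T]}$ is strongly continuous on the compact interval $[0,T]$. Throughout I use the explicit forms, immediate from the definitions of $\delta$ and $\phi$ in Appendix \ref{A:integration}, that for $G\in D_2$, $N\in D_1$, $g\in W$ and $s<w<t$,
\[
[\hat\delta G\,g]_{twu}=[Gg]_{tu}-[Gg]_{tw}-[G\,S_{t-w}\,g]_{wu},\qquad [\hat\delta N\,g]_{ts}=[Ng]_t-[N(S_{t-s}g)]_s .
\]

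\emph{Uniqueness.} Suppose $\Lambda,\Lambda'$ both satisfy $\hat\delta\Lambda=\hat\delta\Lambda'=\mathrm{Id}$ on $D^{1+}_3\cap\hat\delta(D_2)$. For $A$ in this domain, $C:=\Lambda A-\Lambda' A$ lies in $D^{1+}_2$, i.e. in $D^\eta_2$ for some $\eta>1$, and satisfies $\hat\delta C=0$; by Lemma \ref{lem:cochain} it is of the form $C=\hat\delta B$, $B\in D_1$. The relation $\hat\delta C=0$ reads $[Cf]_{ts}=[Cf]_{tw}+[C\,S_{t-w}f]_{ws}$ for $s<w<t$; iterating it along the dyadic partition $\mathcal P^n$ of $[s,t]$ and collapsing the semigroup factors via $S_{t-v}S_{v-u}=S_{t-u}$ gives $[Cf]_{ts}=\sum_{[u,v]\in\mathcal P^n}[C\,S_{t-v}f]_{vu}$, whence
\[
|[Cf]_{ts}|\le M\,\norm C_{D^\eta_2}\,\norm f_W\sum_{[u,v]\in\mathcal P^n}|v-u|^\eta= M\,\norm C_{D^\eta_2}\,\norm f_W\,|t-s|^\eta\,2^{n(1-\eta)} .
\]
Since $\eta>1$, letting $n\to\infty$ forces $[Cf]_{ts}=0$ for all $f,t,s$, i.e. $C=0$ and $\Lambda=\Lambda'$.

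\emph{Existence and the bound \eqref{sew:estLambda}.} Fix $\eta>1$ and $A\in D^\eta_3\cap\hat\delta(D_2)$, and pick a preimage $M\in D_2$ with $\hat\delta M=A$ (which exists, and can be chosen linearly in $A$, by Lemma \ref{lem:cochain}). For $f\in W$, $(t,s)\in\Delta_2$ and a partition $\mathcal P$ of $[s,t]$ set the twisted Riemann sum $(\mathcal S_{\mathcal P}f)_{ts}:=\sum_{[u,v]\in\mathcal P}[M\,S_{t-v}f]_{vu}$, so that the trivial partition returns $[Mf]_{ts}$. Refining an interval $[u,v]$ at a point $w$ changes this sum — by the first identity above applied to $g=S_{t-v}f$ together with $S_{v-w}S_{t-v}=S_{t-w}$ — by exactly $-[A\,S_{t-v}f]_{vwu}$, which is bounded by $M\norm A_{D^\eta_3}\norm f_W|v-u|^\eta$. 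A dyadic telescoping therefore shows that $(\mathcal S_{\mathcal P^n}f)_{ts}$ is Cauchy, converges as $|\mathcal P^n|\downarrow 0$ to a limit $(\mathcal Jf)_{ts}$ (independent of the chosen sequence of partitions by the usual common–refinement comparison, and continuous in $(t,s)$ as a uniform limit of continuous functions), and
\[
\bigl|\,[Mf]_{ts}-(\mathcal Jf)_{ts}\,\bigr|\le M\norm A_{D^\eta_3}\norm f_W\,|t-s|^\eta\sum_{n\ge 0}2^{n(1-\eta)}=:C_\eta\norm A_{D^\eta_3}\norm f_W\,|t-s|^\eta .
\]
Using the semigroup law once more, $(\mathcal Jf)_{ts}$ telescopes as $[\hat\delta\mathcal I f]_{ts}$ for the element $\mathcal I\in D_1$ defined by $[\mathcal I f]_t:=(\mathcal Jf)_{t0}$ (split a partition of $[0,t]$ at $s$ and use $S_{s-v}S_{t-s}=S_{t-v}$), so $\hat\delta(\mathcal J\,\cdot)=\hat\delta\hat\delta\mathcal I=0$. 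Hence, setting $\Lambda A:=M-\mathcal J(\cdot)$, one has $\hat\delta\Lambda A=\hat\delta M=A$, while the displayed bound shows $\Lambda A\in D^\eta_2$ with $\norm{\Lambda A}_{D^\eta_2}\le C_\eta\norm A_{D^\eta_3}$, which is \eqref{sew:estLambda}; as $D^{1+}_3=\bigcup_{\eta>1}D^\eta_3$ this yields $\Lambda:D^{1+}_3\cap\hat\delta(D_2)\to D^{1+}_2$. Finally, $\Lambda A$ does not depend on the choice of preimage: two preimages differ by $\hat\delta N$ with $N\in D_1$, and the twisted Riemann sums of $\hat\delta N$ telescope \emph{exactly} (no limit needed, by the second identity above) to $\hat\delta N$, so $M-\mathcal J(\cdot)$ is unchanged; linearity of $\Lambda$ is then clear from the linear dependence of $M$ on $A$ and of $\mathcal J$ on $M$.

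\emph{Main obstacle.} The genuinely new point compared with the classical sewing lemma is identifying the correct local approximants $[M\,S_{t-v}f]_{vu}$ — with the interval's right endpoint $v$ pushed forward to the global terminal time $t$ — and verifying that a single refinement produces precisely $\hat\delta M=A$ evaluated on a twisted argument; this is exactly where the semigroup identities $S_{v-w}S_{t-v}=S_{t-w}$ and the uniform bound $M<\infty$ replace the trivial bounds of the untwisted setting. Once this bookkeeping is set up, the convergence of the Riemann sums, the stability estimate, and the uniqueness argument are the standard dyadic ones.
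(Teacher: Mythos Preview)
Your proof is correct and follows essentially the same route as the paper's: dyadic twisted Riemann sums $\sum_{[u,v]\in\mathcal P}[M\,S_{t-v}f]_{vu}$ of a preimage $M\in D_2$, the key observation that a single refinement produces exactly $-[\hat\delta M\,S_{t-v}f]_{vwu}=-[A\,S_{t-v}f]_{vwu}$, and the same telescoping argument for uniqueness. Two minor remarks: you reuse the symbol $M$ for both the semigroup bound $\sup_t\|S_t\|$ and the preimage in $D_2$, which should be disentangled; and where the paper simply invokes that the heat semigroup is a contraction ($\|S_{t-v}f\|_W\le\|f\|_W$), you carry the constant $M=\sup_t\|S_t\|$ throughout, which is slightly more general but immaterial here. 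Your explicit verification that $\Lambda A$ is independent of the chosen preimage is a nice addition that the paper leaves implicit.
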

	
	\begin{proof}
		Concerning uniqueness, let $\tilde{\Lambda}$ be another map satisfying the conditions stated in the Lemma. Then for $A\in D^{1+}_3\cap \hat{\delta}D_2$ we have
		\[
		\hat{\delta}(\tilde{\Lambda}A-\Lambda A)=A-A=0
		\]
		hence $Q:=\tilde{\Lambda}A-\Lambda A\in \mbox{Ker}(\hat{\delta}) \cap D_2$. By Lemma \ref{lem:cochain} there exists $q\in D_1$ such that $Q=\hat{\delta}q$. Note that for any partition $\cP^n([s,t])=(t_i)_{0\leq i\leq n+1}$ of the interval $[s,t]\subset [0, T]$ such that $t_0=s$ and $t_{n+1}=t$, we have the following telescopic sum expansion
		\begin{equation*}
		\begin{split}
		\sum_{i=0}^n [\hat{\delta} q S_{tt_{i+1}}f]_{t_{i+1}t_i}&=\sum_{i=0}^n[qS_{tt_{i+1}}f]_{t_{i+1}}-[qS_{tt_{i+1}}]_{t_i}-[qS_{tt_{i+1}}(S_{t_{i+1}t_i}-\mbox{Id})f]_{t_i}\\
		&=\sum_{i=0}^n [qS_{tt_{i+1}}f]_{t_{i+1}}-[qS_{tt_{i}}f]_{t_{i}}\\
		&=[qf]_t-[qS_{ts}f]_s\\
		&=[\hat{\delta}qf]_{ts}
		\end{split}
		\end{equation*}
		for any  $f \in W$. We conclude
		\[
		[Qf]_{ts}=[\hat{\delta}qf]_{ts}=\sum_{i=0}^n [\hat{\delta} q S_{tt_{i+1}}f]_{t_{i+1}t_i}=\sum_{i=0}^n[QS_{tt_{i+1}}f]_{t_{i+1}t_i}.
		\]
		Letting $\cP^n([s,t])$ be for example be the dyadic partition, one obtains for $Q\in D^{\gamma}_2$, with $\gamma>1$, the estimate
		\begin{equation*}
		|[QS_{tt_{i+1}}f]_{t_{i+1}t_i}| \leq 2^{-n\gamma} \norm{QS_{tt_{i+1}}f}_\gamma |t-s|^{\gamma} \leq  2^{-n\gamma}\norm{Q}_{D^\gamma_2} \norm{f}_W |t-s|^{\gamma}
		\end{equation*}
		where we exploited that $S$ is a contraction semigroup. Returning to the telescope sum, we obtain
		\[
		|[Qf]_{ts}|\leq 2^{n(1-\gamma)} \norm{Q}_{D^\gamma_2} \norm{f}_W |t-s|^{\gamma}.
		\]
		By passing to the limit for $n$ which tends to infinity, we conclude that for any $f\in W$ and any $[s,t]\subset [0, T]$
		\[
		[Qf]_{ts}=0
		\]
		yielding $Q=0$, i.e. $\Lambda A=\tilde{\Lambda}A$ for any $A\in D^{1+}_3 \cap \, \hat{\delta} (D_2)$, concluding uniqueness.
		
		Towards existence, let $A\in D^{1+}_3 \cap \, \hat{\delta} (D_2)$, i.e. there exist a $B\in D_2$ and $\eta>1$ such that $\hat{\delta}B=A \in D^\eta_3$. Let $(r_k^n)_{0\leq k\leq 2^n}$ be the dyadic partition of $[s,t]$. We set, following \cite{gubinelli2010}
		\begin{align*}
		M^n: D^{1+}_3 \cap \, \hat{\delta} (D_2) &\rightarrow D_2^{1+}\\
		\hat{\delta}B& \mapsto M^n\hat{\delta}B
		\end{align*}
		where
		\[
		[(M^n\hat{\delta}B)f]_{ts}:=[B(f)]_{ts}-\sum_{k=0}^{2^n-1} [ B(S_{tr_{k+1}^n}f)]_{r_{k+1}^nr_k^n}.
		\]
		Note in particular that $[M^n\hat{\delta} f]_{ts}=0$.
		We show that $(M^n\hat{\delta}B)_n$ is Cauchy in $D^\eta_2$. Note that
		\begin{equation*}
		\begin{split}
		&[(M^n\hat{\delta}B)f]_{ts}-[(M^{n+1}\hat{\delta}B)f]_{ts} = \\ =&\sum_{k=0}^{2^n-1}[B(S_{tr_{2k+2}^n}f)]_{r_{2k+2}^nr_{2k}^n}-[B(S_{tr_{2k+2}^n}f)]_{r_{2k+2}^nr_{2k+1}^n}-[B(S_{tr_{2k+1}^n}f)]_{r_{2k+1}^nr_{2k}^n}\\
		=&\sum_{k=0}^{2^n-1}[\delta B(S_{tr_{2k+2}^n}f)]_{r_{2k+2}^nr_{2k+1}^nr_{2k}^n}-\sum_{k=0}^{2^n-1}[\phi B(S_{tr_{2k+2}^n}f)]_{r_{2k+2}^nr_{2k+1}^nr_{2k}^n}\\
		=&\sum_{k=0}^{2^n-1}[\hat{\delta} B(S_{tr_{2k+2}^n}f)]_{r_{2k+2}^nr_{2k+1}^nr_{2k}^n} \leq (t-s)^\eta\sum_{k=0}^{2^n-1} \norm{\hat{\delta}B(S_{tr_{2k+2}^n}f)}_\eta 2^{-n\eta}\\
		\leq & (t-s)^\eta \norm{\hat{\delta}B}_{D^{\eta}_3} \norm{f}_W 2^{-n(\eta-1)}
		\end{split}
		\end{equation*}
		From which we deduce that $(M^n\hat{\delta}B)_{n\in \N}$ is a Cauchy sequence in $D^{\eta}_2$, indeed
		\begin{equation*}
		\norm{M^n\hat{\delta}B-M^{n+1}\hat{\delta}B}_{D^{\eta}_2} \leq \norm{\hat{\delta}B}_{D^{\eta}_3} 2^{-n(\eta-1)}.
		\end{equation*}
		Let $\Lambda \hat{\delta}B\in D^{\eta}_2$ be its limit. By a telescope argument
		\begin{equation*}
		\norm{M^n\hat{\delta}B}_{D^\eta_2} = \norm{\sum_{k=0}^{n-1}M^k\hat{\delta}B-M^{k+1}\hat{\delta}B}_{D^{\eta}_2} \leq \sum_{k=0}^{n-1} 2^{-k(\eta-1)} \norm{\hat{\delta}B}_{D^{\eta}_3} \leq C_\eta \norm{\hat{\delta}B}_{D^{\eta}_3}
		\end{equation*}
		from which we obtain \eqref{sew:estLambda} using to weak-*-lower semicontinuity of the norm. One can prove that the limit does not depend on the particular sequence, see \cite[Proposition 2.3]{gubinelli2010}.
		
		Finally, let $u=2^m$ for some $m\in 0, \dots, n$ and note that
		\begin{align*}
		[(\hat{\delta}M^n\hat{\delta}B)f]_{tus}&=[(M^n\hat{\delta}B)f]_{ts}-[(M^n\hat{\delta}B)f]_{tu}-[(M^n\hat{\delta}B)f]_{us} -[(M^n\hat{\delta}B(S_{tu}-\mbox{Id}))f]_{us}\\
		&=[Bf]_{ts}-[Bf]_{tu}-[BS_{tu}f]_{us} +\sum_{k=0}^{2^n-1} [ B(S_{tr_{k+1}^n}f)]_{r_{k+1}^nr_k^n} \\
		&-\sum_{k=2^m}^{2^n-1} [ B(S_{tr_{k+1}^n}f)]_{r_{k+1}^nr_k^n} -\sum_{k=0}^{2^m-1} [ B(S_{ur_{k+1}^n}S_{tu}f)]_{r_{k+1}^nr_k^n}\\
		&=[Bf]_{ts}-[Bf]_{tu}-[BS_{tu}f]_{us}\\
		&=[\hat{\delta}Bf]_{sut}
		\end{align*}
		from which we recover, in the limit $n\rightarrow \infty$, that $\hat{\delta} \Lambda = \restriction{\text{Id}}{D^{1+}_3 \cap \, \hat{\delta} (D_2)}$.
		%as well as
		%\[
		%||\Lambda \hat{\delta}B(f)
		%\]
		%Note that in the case of $\hat{\delta}B:W\rightarrow C^\eta_3$ bounded, we can instead estimate for any $f\in W$
		%\begin{align*}
		%    [(M^n\hat{\delta}B)f]_{ts}-[(M^{n+1}\hat{\delta}B)f]_{ts}&\leq (t-s)^\eta\sum_{k=0}^{2^n-1}||\hat{\delta}B(S_{tr_{2k+2}^n}f)||_{C^{\eta}_3}2^{-n\eta}\\
		%   &\leq (t-s)^\eta\sum_{k=0}^{2^n-1}||S_{tr_{2k+2}^n}f||_{W}2^{-n\eta}\\
		%   &\leq (t-s)^\eta||f||_{W}2^{-n(\eta-1)}
		%\end{align*}
		%which means $(M^n\hat{\delta}B)f$ converges in $C^\eta_2$. By uniqueness of limits in the previous case, this limit has to coincide with $(\Lambda \hat{\delta}B)f$ and we have the bound
		%\[
		%||(\Lambda \hat{\delta}B)f||_{C^\eta_2}\leq C_\eta \norm{f}_W
		%\]
		%meaning.
	\end{proof}
	
	\medskip
	
	\begin{cor}
		\label{int_well_defined}
		Suppose that $A \in D_2$ is such that $\hat{\delta}A \in D^{1+}_3$. Then there exists $I\in D_1$ such that
		\begin{equation*}
		\hat{\delta} I = \left(\emph{Id} - \Lambda\hat{\delta} \right) A,
		\end{equation*}
		i.e. for every $f \in W$ and $(t,s) \in \Delta_2$, $\left[\hat{\delta} I f\right]_{ts} = \left[Af\right]_{ts} - \left[\Lambda \hat{\delta} A f\right]_{ts}$. In particular,  if $A\in D^\mu_2$ with $\mu>0$ and $\hat{\delta}A \in D^\eta_3$ with $\eta>1$, then for every $f \in W$
		\begin{equation}
		\label{sew:estI}
		\left|[\hat{\delta}If]_{ts}\right|\leq \left( \norm{A}_{D^\mu_2}(t-s)^\mu+\norm{\hat{\delta}A}_{D^\eta_3}(t-s)^\eta\right) \norm{f}_W.
		\end{equation}
		
		Finally
		\begin{equation}
		\label{sew:int}
		\left[\hat{\delta} I f\right]_{ts} = \;  \lim_{|\cP[s,t]|\downarrow 0} \; \sum_{[v,u]\in \cP[s,t]} \left[A S_{tu}f \right]_{uv},
		\end{equation}
		where the limit is over any partition of $[s,t]$ whose mesh tends to zero.
	\end{cor}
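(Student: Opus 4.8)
The plan is to obtain $I$ from the Sewing Lemma by the standard homological argument, and then to identify the one-increment $[\hat{\delta}If]_{ts}$ with the announced limit of Riemann-type sums via a telescoping computation. First I would observe that $\hat{\delta}A\in\hat{\delta}(D_2)$ automatically (as $A\in D_2$) and $\hat{\delta}A\in D^{1+}_3$ by hypothesis, so $\hat{\delta}A\in D^{1+}_3\cap\hat{\delta}(D_2)$ and Lemma \ref{lem:sewing} applies: it produces $\Lambda\hat{\delta}A\in D^{1+}_2$ with $\hat{\delta}\Lambda\hat{\delta}A=\hat{\delta}A$. Then $\hat{\delta}\big[(\text{Id}-\Lambda\hat{\delta})A\big]=\hat{\delta}A-\hat{\delta}A=0$, so $(\text{Id}-\Lambda\hat{\delta})A\in\text{Ker}\,\restriction{\hat{\delta}}{D_2}$; by the acyclicity of $(D_*,\hat{\delta})$ established in Lemma \ref{lem:cochain} this kernel equals $\text{Im}\,\restriction{\hat{\delta}}{D_1}$, which yields an $I\in D_1$ with $\hat{\delta}I=(\text{Id}-\Lambda\hat{\delta})A$, i.e. $[\hat{\delta}If]_{ts}=[Af]_{ts}-[\Lambda\hat{\delta}Af]_{ts}$ for all $f\in W$ and $(t,s)\in\Delta_2$.

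The stability bound \eqref{sew:estI} is then immediate from this identity: $\big|[Af]_{ts}\big|\le\norm{A}_{D^\mu_2}(t-s)^\mu\norm{f}_W$ by definition of $D^\mu_2$, while \eqref{sew:estLambda} gives $\norm{\Lambda\hat{\delta}A}_\eta\le C_\eta\norm{\hat{\delta}A}_{D^\eta_3}$, hence $\big|[\Lambda\hat{\delta}Af]_{ts}\big|\le C_\eta\norm{\hat{\delta}A}_{D^\eta_3}(t-s)^\eta\norm{f}_W$; adding the two estimates (and absorbing $C_\eta$) gives the claimed inequality.

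For the Riemann-sum representation \eqref{sew:int} I would use that $\hat{\delta}^2=0$, so $\hat{\delta}I\in\text{Ker}\,\restriction{\hat{\delta}}{D_2}=\text{Im}\,\restriction{\hat{\delta}}{D_1}$, say $\hat{\delta}I=\hat{\delta}q$ with $q\in D_1$. The crucial point is the same telescoping identity appearing in the uniqueness part of Lemma \ref{lem:sewing}: for any partition $s=u_0<\dots<u_N=t$ of $[s,t]$, using the semigroup law $(S_{u_{i+1}u_i}-\text{Id})S_{tu_{i+1}}=S_{tu_i}-S_{tu_{i+1}}$ one collapses $\sum_{i}[\hat{\delta}q\,S_{tu_{i+1}}f]_{u_{i+1}u_i}$ into a telescope, obtaining
\[
\sum_{i=0}^{N-1}[\hat{\delta}q\,S_{tu_{i+1}}f]_{u_{i+1}u_i}=[qf]_t-[qS_{ts}f]_s=[\hat{\delta}qf]_{ts},
\]
so that $[\hat{\delta}If]_{ts}=\sum_{i}[\hat{\delta}I\,S_{tu_{i+1}}f]_{u_{i+1}u_i}$ for \emph{every} partition. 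Substituting $\hat{\delta}I=A-\Lambda\hat{\delta}A$ and noting that, since $\Lambda\hat{\delta}A\in D^\eta_2$ with $\eta>1$ and $S$ is a contraction semigroup, $\big|\sum_{i}[\Lambda\hat{\delta}A\,S_{tu_{i+1}}f]_{u_{i+1}u_i}\big|\le\norm{\Lambda\hat{\delta}A}_\eta\norm{f}_W\,(t-s)\,|\cP[s,t]|^{\eta-1}\to 0$ as the mesh vanishes, I conclude $[\hat{\delta}If]_{ts}=\lim_{|\cP[s,t]|\downarrow 0}\sum_{[v,u]\in\cP[s,t]}[AS_{tu}f]_{uv}$, which is \eqref{sew:int}.

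The main obstacle is really the bookkeeping in this last step: getting the semigroup $S_{tu}$ with the \emph{right} endpoint $u$ of each subinterval into the germ $[AS_{tu}f]_{uv}$, and checking that this is exactly the combination for which the telescope closes in the $\hat{\delta}$-complex (rather than in the ordinary $\delta$-complex). Once that algebraic identity is in place, the passage to the limit is a soft consequence of the $D^{1+}$-regularity of $\Lambda\hat{\delta}A$ together with the contractivity of $S$, and the remaining arguments are a direct transcription of Gubinelli's construction \cite{gubi,gubinelli2010} to the present setting.
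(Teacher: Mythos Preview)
Your proposal is correct and follows essentially the same route as the paper: the existence of $I$ via $\hat{\delta}\big[(\mathrm{Id}-\Lambda\hat{\delta})A\big]=0$ and acyclicity, the bound \eqref{sew:estI} from \eqref{sew:estLambda}, and the Riemann-sum identity via the telescoping formula $\sum_i[\hat{\delta}I\,S_{tu_{i+1}}f]_{u_{i+1}u_i}=[\hat{\delta}If]_{ts}$ (exactly the one from the uniqueness part of Lemma~\ref{lem:sewing}) combined with the $D^{1+}_2$-regularity of $\Lambda\hat{\delta}A$. The only superfluous step is the detour through an auxiliary $q\in D_1$ with $\hat{\delta}I=\hat{\delta}q$: since $I$ is already in $D_1$, you may simply take $q=I$ and apply the telescope directly.
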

	
	\begin{proof}
		The proof is an easy application of the Sewing Lemma and the properties of $(D_*,\hat{\delta})$. Indeed, observe that $\hat{\delta} (Af - \Lambda \hat{\delta} Af) = 0$ for any $f \in W$, which means that $A- \Lambda \hat{\delta} A \in \text{Ker} \, \restriction{\hat{\delta}}{D_2}$, and thus there exists $I \in D_1$ such that $\hat{\delta} I = A - \Lambda\hat{\delta} A$.
		
		The estimate \eqref{sew:estI} follows from \eqref{sew:estLambda}. Concerning \eqref{sew:int}, observe that for a partition $|\cP[s,t]|$, using the properties of $\hat{\delta}$, one obtains
		\begin{equation*}
		\begin{split}
		\sum_{[v,u]\in \cP[s,t]} \left[A S_{tu}f \right]_{uv} &= \sum_{[v,u]\in \cP[s,t]} \left[ \hat{\delta} I S_{tu} f \right]_{uv} + \left[\Lambda\hat{\delta} A S_{tu} f \right]_{uv} = \\
		&= \left[ \hat{\delta} I f \right]_{ts} + \sum_{[v,u]\in \cP[s,t]} \left[\Lambda\hat{\delta} A S_{tu} f \right]_{uv}.
		\end{split}
		\end{equation*}
		By taking the limit for the mesh which tends to zero and using the fact that $\Lambda\hat{\delta}A \in D^{1+}_2$, the last sum converges to zero.
	\end{proof}

	\bigskip
	
	\section*{Acknowledgements}
	The authors are thankful to their respective supervisors Lorenzo Zambotti and Giambattista Giacomin for insightful discussions and advice. F.C. would like to thank Sylvain Wolf for his precious and tireless help with Sobolev spaces and fractional norms.
	
	F.B. acknowledges the support from the European Union’s Horizon 2020 research and innovation programme under the Marie Sk\l odowska-Curie grant agreement No 754362.
	
	F.C. acknowledges the support from the European Union’s Horizon 2020 research and innovation programme under the Marie Sk\l odowska-Curie grant agreement No 665850.

	\begin{figure}[ht]
	    \centering
	  \includegraphics[height=10mm]{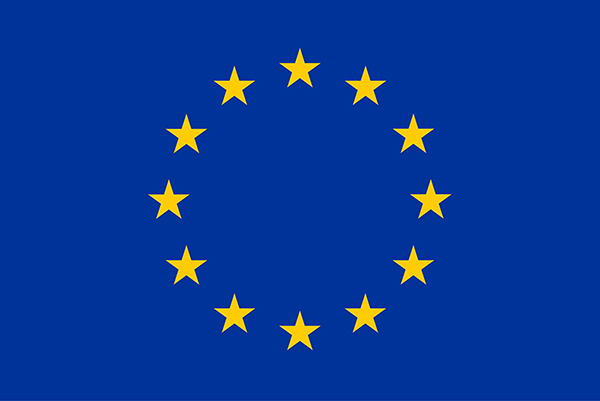}
	\end{figure}

	\bigskip

	\bibliographystyle{abbrv}
	\bibliography{biblio}
	%\nocite{*}
\end{document}